\newcommand{\eps}{\varepsilon}
\newcommand{\R}{\mathbb{R}}
\newcommand{\Om}{\Omega}
\newcommand{\Oes}{\Omega^\text{s}_\eps}
\newcommand{\bfu}{\textbf{u}}
\newcommand{\bfc}{\textbf{c}}
\newcommand{\bfue}{\bfu_{\eps}}
\newcommand{\bfv}{\textbf{v}}
\newcommand{\bff}{\textbf{f}}
\newcommand{\bfe}{\textbf{e}}
\newcommand{\bfchi}{\boldsymbol{\chi}}
\newcommand{\bfxi}{\boldsymbol{\xi}}
\newcommand{\bfeta}{\boldsymbol{\eta}}
\newcommand{\tx}{(t,x)}
\newcommand{\txxe}{\left(t,x,\frac{x}{\eps}\right)}
\newcommand{\bfF}{\textbf{F}}
\newcommand{\bfS}{\textbf{S}}
\newcommand{\bfSe}{\textbf{S}_\eps}
\newcommand{\bfFe}{\textbf{F}_\eps}
\newcommand{\bfD}{\textbf{D}}
\newcommand{\Ahom}{\textbf{A}^*}
\newcommand{\Jhom}{J^*}
\newcommand{\Dhom}{\textbf{D}^*}
\newcommand{\txy}{(t,x,y)}
\newcommand{\txdy}{(t,x,\cdot_y)}
\newcommand{\adjnorm}[1]{\left\lVert#1\right\rVert}
\newcommand{\norm}[1]{\|#1\|}
\newcommand{\Je}{J_\eps}
\newcommand{\ce}{c_\eps}
\newcommand{\De}{\textbf{D}_\eps}
\newcommand{\wh}[1]{\widehat{#1}}
\newcommand{\C}{\mathcal{C}}
\newcommand{\bs}[1]{\boldsymbol{#1}}
\newcommand{\hrdata}[1]{\hyperref[tab:data]{\textbf{#1}}}
\newcommand{\hrdataeps}[1]{\hyperref[tab:data]{\textbf{#1}$_\eps$}}
\newcommand{\ass}[1]{\hyperref[assumptions]{#1}}
\newtheorem{theorem}{Theorem}[section]
\newtheorem{proposition}[theorem]{Proposition}
\newtheorem{lemma}[theorem]{Lemma}
\newtheorem{remark}[theorem]{Remark}
\title{Global well-posedness and numerical justification \\ of an effective micro-macro model \\for reactive transport in elastic perforated media}
\author{Jonas Knoch \and Markus Gahn \and Maria Neuss-Radu}
\date{\today}
\begin{document}
\maketitle
\begin{abstract}
In this paper, we investigate an effective  model for reactive transport in elastically deformable perforated media. This model was derived by formal asymptotic expansions in \cite{knoch2023}, starting from a microscopic model consisting of a linear elasticity problem on a fixed domain, i.e. in the \textit{Lagrangian} framework, and a problem for reactive transport on the current deformed domain, i.e. in the \textit{Eulerian} framework. The effective model is of micro-macro type and features strong non-linear couplings. Here, we prove global existence in time and uniqueness for the effective micro-macro model under a smallness assumption for the data of the macroscopic elasticity subproblem.  Moreover, we show numerically the convergence of microscopic solutions towards the solution of the effective model when the scale parameter $\eps>0$ becomes smaller and smaller, and also compute   the approximation error. The numerical justification of the formally derived effective  micro-macro model is particularly important, as rigorous analytical convergence proofs or error estimates are not available so far. Finally, we compare the effective micro-macro model with alternative, simpler effective descriptions of transport in elastic perforated media.
\end{abstract}
%

%
\section{Introduction}
In this paper, we are concerned with the analysis and numerical investigation of an effective micro-macro model for reactive transport in elastically deformable perforated media. This model was derived previously in  \cite{knoch2023} by  formal two-scale asymptotic expansions, starting from a microscopic model, and is motivated, for example, by biomedical applications concerning the transport of nutrients or drugs through deformable biological tissues, like pulmonary or cardiac tissue.  The microscopic description consists of a linear quasi-static elasticity system within the \textit{Lagrangian} framework on a periodically perforated reference domain representing the solid part of a porous medium and a system for reactive transport of multiple species in the current deformed solid domain, i.e. within the \textit{Eulerian} framework. In preparation of the formal upscaling, the problem was transformed into a \textit{unified Lagrangian} description by pulling back the transport problem onto the fixed, periodically perforated reference domain.   
The formally derived effective model is of micro-macro type and consists of effective (or macro) equations for the effective displacement and the effective concentrations, coupled with (micro) equations on the reference periodicity cell  which characterizes the microstructure. In addition, the two
subsystems for elasticity and transport are coupled via non-linear coefficients. Let us mention that a rigorous convergence proof of the microscopic solutions to the solution of the effective model  is not available so far. 

The aim of this paper is to prove the well-posedness of the effective model and to study its approximation properties by means of numerical simulations. More specifically, we prove global existence and uniqueness for the effective micro-macro problem. 
The main difficulty in our analysis is to establish the properties of the effective coefficients of the macroscopic transport problem which are nonlinearly coupled to the macroscopic displacement and the elasticity cell solutions.   To overcome this difficulty, we first show higher regularity of the elasticity cell solutions and of the macroscopic displacement, which are independent of the transport subsystem, based on suitable regularity of the data. Here we benefit from the fact that due to scale separation, the mixed boundary conditions that occur in the microscopic problems are now split into inhomogeneous \textit{Dirichlet} boundary conditions for the macroscopic displacement and homogeneous \textit{Neumann} boudary conditions for the cell-problems. Then, the well-posedness of the diffusion cell problems and of the effective reactive transport problem is proved based on a smallness assumption on the data of the macroscopic elasticity problem. Finally, we study the sensitivity of the smallness assumption with respect to microscopic geometry and the elastic properties of the perforated material by means of numerical simulations, and identify critical settings that allow greater flexibility in the choice of data for the macroscopic elasticity problem.
\par 
Next, we investigate numerically the convergence of the solutions to the microscopic problems towards the solution of the effective micro-macro problem for smaller and smaller values of the scale parameter $\eps > 0$. Our convergence results, yielding estimated convergence orders of approximately $1$ and $\frac{1}{2}$ in the $L^2$ and $H^1$-norm, respectively, numerically justify the formally derived effective micro-macro model. To further illustrate the approximation quality of the effective model, we present numerical simulations of the approximation errors, in particular when the first order corrector terms are added to the macroscopic solutions. Finally, we compare our effective micro-macro model with two simpler effective descriptions of transport in elastic perforated media and see, that those alternative approaches cannot capture all effects of the deformation on the transport which are observed for our effective micro-macro model. Let us mention, that the numerical investigations in this paper are using the computational framework developed in \cite{knoch2023} based on the open source finite element library \textit{deal.II} \cite{dealii2023,dealii2019}.\par  
Let us give an overview on the literature that is related to our work. Rigorous homogenization techniques for elastic composites or perforated media are extensively studied in \cite{bakhvalov1984,oleinik1992}. Effective models for transport processes in elastic porous media have been first developed 
in a purely \textit{Lagrangian} framework, i.e., for a linearized microscopic solid-fluid interface. Here, we mention \cite{jaeger2009_2,jaeger2011}, where a model for biological tissues, accounting for the deformation of the cellular structure, fluid flow in the extracellular space and transport in both phases
was introduced and homogenized, and \cite{brun2018}, where a system of linear thermoelasticity coupled to fluid flow in a porous medium was upscaled by formal asymptotic expansions. In both cases the resulting effective models consisted of an effective transport system coupled to a \textit{Biot}-type model \cite{biot1956, gilbert2000,clopeau2001,rohan2020}. 
\par
Fluid flow in a deformable porous medium in \textit{Lagrangian}/\textit{Eulerian} description was  considered, e.g., in \cite{brown2014} where a model featuring \textit{Stokes} flow in the pore space in \textit{Eulerian} description and linear elasticity in the solid matrix in the \textit{Lagrangian} description was formally homogenized by using an \textit{Arbitrary Lagrangian}/\textit{Eulerian} framework, leading to a nonlinear \textit{Biot} model on the macroscale. The model from \cite{brown2014} was generalized in \cite{collis2017} by considering a hyperelastic, morpho-poro-elastic medium as well as transport in both, the solid and the fluid. However, the explicit representation of the effective coefficients was not specified. \par
Further results related to our work deal with processes in porous media with an evolving microstructure where the evolution is prescribed \textit{a priori}, see, e.g. \cite{gahn2021,peter2007} for reaction-diffusion models or \cite{eden2017} for a model in thermoelasticity where the evolution is due to phase transitions.
In both cases, effective models are derived using the method of two-scale convergence. 
Rigorous homogenization results of problems including a microscopic free boundary are rare, see for example \cite{gahn2023} for a microstructure characterized by  spherical grains, where the radii depend on the solute concentration at the surface. 
\par
Numerical simulation frameworks for micro-macro models have been developed e.g. for problems of mineral precipitation/dissolution, see e.g. \cite{ray2019,olivares2021}. Numerical convergence of microscopic solutions to solutions of effective models has been shown e.g. in \cite{allaire2009}, where 
a non-linear and non-local heat transfer problem was considered,
 in \cite{khoa2021}
 for a semi-linear elliptic problem, and in \cite{ye2021} 
 for a \textit{Steklov} eigenvalue problem. 
 \par

The paper is structured as follows. In Section \ref{sec:the_models} both the microscopic model in unified \textit{Lagrangian} description as well as the 
effective micro-macro model from \cite{knoch2023} are introduced. The proof for the global existence and uniqueness for the effective micro-macro model is given in Section \ref{sec:analysis}, together with the numerical characterization of the smallness condition, see Section \ref{subseq:characterization}. In Section \ref{sec:numerical_justification}  we show numerical convergence of the microscopic problems towards the micro-macro model and also investigate the approximation error. The paper wraps up with a conclusion and an outlook in Section \ref{sec:discussion}.
\section{The microscopic model and the effective micro-macro model}
\label{sec:the_models}
\begin{figure}[t]
\centering
\includegraphics[width=0.6\textwidth]{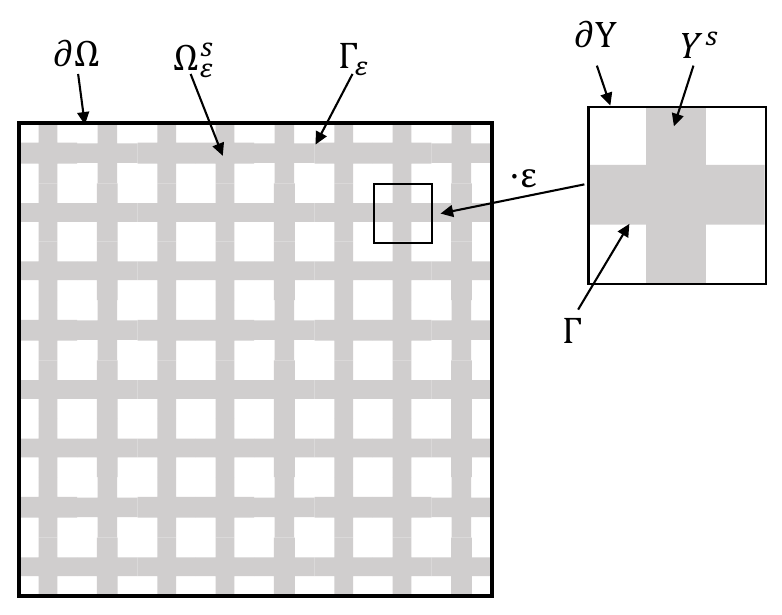}
\caption{Sketch of the microscopic geometry of the reference domain, see \cite{knoch2023}.}
\label{fig:micro_domain}
\end{figure}

We start by defining the geometry of the reference domains for the microscopic elasticity-transport problem as well as for the macroscopic (effective) micro-macro problem. Let $\omega \subset \R^n$ ($n = 2,3$ for the physically relevant cases) be an unbounded domain with 1-periodic structure, i.e. $\omega + \textbf{k} = \omega$ for all $\textbf{k} \in \mathbb{Z}^n$. The reference (periodicity) cell is defined by $Y^s := \omega \cap (0,1)^n$ and the inner part of its boundary is denoted by $\Gamma := \text{int}(\partial Y^s \setminus \partial (0,1)^n)$. Let $\eps > 0$ be the scale parameter. The microscopic perforated reference domain $\Oes$ is given by
$$
\Oes := \Om \cap \eps \omega,
$$
and represents the solid part of a deformable porous medium with reference domain $\Om \subset \R^n$.
The internal (microscopic) boundary of the perforated domain $\Oes$
is denoted by $\Gamma_\eps := \text{int}(\partial \Oes \setminus \partial \Om)$. See Figure \ref{fig:micro_domain} for a sketch of the microscopic reference domain.
\par 
The microscopic model in \textit{Lagrangian} formulation reads as follows: For $T>0$, find the displacement $\bfu_\eps \colon (0,T) \times \Oes \rightarrow \R^n$ and concentrations $c_\eps^m \colon (0,T) \times \Oes \rightarrow \R$, for $m=1,...,N_c$, with $\bfc_\eps := (c_\eps^1, ..., c_\eps^{N_c})^T$, such that
\begin{subequations}
\label{eq:micro_model_on_reference_domain}
\begin{align}
 - \nabla \cdot \left( \textbf{A} \bfe(\bfue) \right) &= \bff_e & \mbox{ in } & (0,T) \times \Oes, \label{eq:micro_model_on_reference_domain:a} \\
- \textbf{A}\bfe(\bfue) \cdot \textbf{n} &= 0 & \mbox{ on } & (0,T) \times \Gamma_\eps, \label{eq:micro_model_on_reference_domain:b} \\
\bfue & = \bfu_D & \mbox{ on } & (0,T) \times \partial \Oes \setminus \Gamma_\eps, \label{eq:micro_model_on_reference_domain:c} \\
\partial_t \left( \Je \ce^m \right) - \nabla \cdot \left( \De \nabla \ce^m \right) &= \Je f_d^m(\textbf{c}_\eps) & \mbox{ in } & (0,T) \times \Oes, \label{eq:micro_model_on_reference_domain:d} \\
- \De \nabla \ce^m \cdot \textbf{n} &= 0 & \mbox{ on } & (0,T) \times \Gamma_\eps, \label{eq:micro_model_on_reference_domain:e} \\
\ce^m &= 0 & \mbox{ on } & (0,T) \times \partial \Oes \setminus \Gamma_\eps ,\label{eq:micro_model_on_reference_domain:f} \\
\ce(0,\cdot) &=  c^{0,m} & \mbox{ in } & \Oes. \label{eq:micro_model_on_reference_domain:g}
\end{align}
\end{subequations}%
Here, $\textbf{A} \in \R^{n\times n \times n \times n}$ is a constant fourth-order elasticity tensor and $\bff_e \colon (0,T) \times \Oes \rightarrow \R^n$ is a body force acting on the solid phase. $\bfe(\textbf{w}) := \frac{1}{2}\left( \nabla \textbf{w} + (\nabla \textbf{w})^T\right)$ denotes the symmetric gradient and on the outer part of the boundary of $\Oes$, the displacement is prescribed by a function $\bfu_D\colon (0,T) \times \partial \Oes \setminus \Gamma_\eps \rightarrow \R^n$ with $\bfu_D(0,\cdot) = 0$. $\textbf{n}$ denotes the outer normal to the microscopic boundary $\Gamma_\eps$. \par 
The deformation $\bfS_\eps \colon (0,T) \times \Oes \rightarrow \R^n$ of the microscopic domain is given by 
\begin{equation*}
\bfS_\eps (t,x) := x + \bfue(t,x).
\end{equation*}
In \cite{knoch2023}, the deformation $\bfS_\eps$ was used to pull back the transport problem formulated on the current deformed domain 
$$
\Oes(t) := \{ \wh{x} \in \R^n \mid \wh{x} = \bfSe(t,x), \, x \in \Oes \},
$$
i.e. within the \textit{Eulerian} framework, onto the fixed reference domain $\Oes$, leading to the subsystem \eqref{eq:micro_model_on_reference_domain:d}-\eqref{eq:micro_model_on_reference_domain:g} in the \textit{Lagrangian} framework. This transformation introduces the time and space dependent coefficients $J_\eps$ and $\bfD_\eps$ for the new transport problem on the fixed reference domain which carry the information about the deformation of the domain. The coefficients $J_\eps$ and $\bfD_\eps$ are nonlinear functions of the deformation gradient 
$
\bfFe(t,x) := \nabla \bfSe(t,x),
$
and are given by 
$$
J_\eps(t,x) := \det(\bfFe(t,x)), \quad 
\bfD_\eps(t,x) := [J_\eps \bfFe^{-1} \wh{\bfD} \bfFe^{-T}](t,x).
$$
$\wh{\bfD} \in \R^{n\times n}$ denotes the constant diffusion tensor and $f_d^m \colon \R^n \rightarrow \R$ denotes the reaction term of the $m$-th species, $m = 1,...,N_c$. Moreover, the initial concentrations are given by the functions $c^{0,m} \colon \Om \rightarrow \R$, restricted to the microscopic domain, for all species $m=1,...,N_c$.
\par \bigskip
In this \textit{Lagrangian} formulation, the microscopic problem \eqref{eq:micro_model_on_reference_domain:a}-\eqref{eq:micro_model_on_reference_domain:g} is accessible for the method of two-scale asymptotic expansion \cite{bensoussan2011,sanchez-palencia1980,bakhvalov1984} in order to formally derive an \textit{effective} or \textit{homogenized} model, which does not  explicitly feature the microstructure anymore and is therefore easier to handle from a numerical point of view. The basic principle of this method is to postulate representations of the unknowns, $\bfue$ and $\bfc_\eps$, as power series in terms of the scale parameter $\eps$, i.e.
\begin{equation} \label{eq:two_scale_expansions}
\bfue\tx = \sum_{i=0}^\infty \eps^i \bfu_i \left(t,x,\frac{x}{\eps}\right), \quad \bfc_\eps\tx = \sum_{i=0}^\infty \eps^i \bfc_i \left(t,x,\frac{x}{\eps}\right),
\end{equation}
with coefficient functions $\bfu_i(t,x,y)$ and $\textbf{c}_i(t,x,y)$ depending on the macroscopic variable $x \in \Om$ and the microscopic variable $y \in Y^s$ and being $Y^s$-periodic with respect to $y$. For a detailed derivation of the effective model see \cite{knoch2023}.
\par \bigskip
The effective micro-macro model, derived from the microscopic problem \eqref{eq:micro_model_on_reference_domain:a}-\eqref{eq:micro_model_on_reference_domain:g}, reads as follows: Find $\bfu\colon (0,T) \times \Omega \rightarrow \R^n$ and $c^m\colon (0,T) \times \Omega \rightarrow \R$, for $m=1,...,N_c$, with $\bfc := (c^1,..., c^{N_c})^T$, such that 
\begin{subequations}
\label{eq:effective_micro_macro_model}
\begin{align}
- \nabla \cdot \left( \Ahom \bfe( \bfu ) \right) &= |Y^s|\bff_\text{e} & \mbox{ in } & (0,T) \times \Omega, \label{eq:effective_micro_macro_model:a}
\\
\bfu  &= \bfu_D &\mbox{ on } & (0,T) \times \partial \Omega, \label{eq:effective_micro_macro_model:b}\\
\partial_t \left( \Jhom c^m \right) - \nabla \cdot \left( \Dhom\nabla c^m \right)  &=  \Jhom f_d^m(\bfc) & \mbox{ in } & (0,T) \times \Omega, \label{eq:effective_micro_macro_model:c}\\
c^m  & = 0 & \mbox{ on } & (0,T) \times \partial \Omega, \label{eq:effective_micro_macro_model:d}\\
c^m(0,\cdot)  & = c^{m,0} & \mbox{ in } & \Omega. \label{eq:effective_micro_macro_model:e}
\end{align}
\end{subequations}
Here, $\Ahom$, $\Jhom$ and $\Dhom$ are homogenized coefficients defined as follows
\begin{align}
A^*_{ijkl} &= \int_{Y^s} \textbf{A}\left(\bfe_y(\boldsymbol{\chi}_{kl}(y)) + \textbf{M}_{kl}\right):\left(\bfe_y(\boldsymbol{\chi}_{ij}(y)) + \textbf{M}_{ij}\right)\textrm{d}y, \label{eq:Ahom}\\
\Jhom(t,x) &= \int_{Y^s} J_0(t,x,y) \,\textrm{d}y, \label{eq:Jhom} \\
D^*_{ij}(t,x) &= \int_{Y^s} \textbf{D}_0(t,x,y)\left(\mathbf{e}_j + \nabla_y\eta_j(t,x,y)\right) \cdot \left(\mathbf{e}_i + \nabla_y\eta_i(t,x,y)\right) \textrm{d}y, \label{eq:Dhom}
\end{align}
via the solutions $\bfchi_{ij}\colon Y^s \rightarrow \R^n$, of the elasticity cell problems 
\begin{subequations}
\label{eq:elasticity_cell_problems}
\begin{align}
- \nabla_y \cdot \left[\mathbf{A}\left( \textbf{M}_{ij} + \bfe_y(\boldsymbol{\chi}_{ij}(y)) \right)\right] &= 0 &\mbox{ in }& Y^s,
\\
- \mathbf{A}\left( \textbf{M}_{ij} + \bfe_y(\boldsymbol{\chi}_{ij}(y)) \right) \cdot \mathbf{n}_{\Gamma} &= 0 &\mbox{ on }& \Gamma,\\
\boldsymbol{\chi}_{ij} \mbox{ is } Y^s \mbox{-periodic in $y$, } \, \int_{Y^s} \boldsymbol{\chi}_{ij}(y) \textrm{d}y &= 0, 
\end{align}
\end{subequations}
and the solutions $\eta_i \colon(0,T) \times \Omega \times Y^s \rightarrow \R$ of the diffusion cell problems
\begin{subequations}
\label{eq:diffusion_cell_problems}
\begin{align}
-\nabla_y \cdot  \left[\mathbf{D}_0\txy(\mathbf{e}_i + \nabla_y \eta_i\txy)\right] &= 0 &\mbox{ in }& (0,T) \times \Omega \times Y^s, \\
- \mathbf{D}_0\txy\left(\mathbf{e}_i + \nabla_y \eta_i\txy\right) \cdot \mathbf{n}_\Gamma &= 0 &\mbox{ on } & (0,T) \times \Omega \times \Gamma, \\
\eta_i \text{ is $Y^s$-periodic in } y, \quad \int_{Y^s} \eta_i\txy \,\textrm{d}y &= 0,
\end{align}
\end{subequations}%
for $i,j,k,l = 1, ...,n$. Here, $\mathbf{n}_\Gamma$ denotes the outer normal to the cell boundary $\Gamma$. Furthermore, $J_0$ and $\mathbf{D}_0$ are given by
\begin{align}
J_0(t,x,y) &= \det (\mathbf{F}_0\txy),\label{def:J_0}\\
\mathbf{D}_0\txy &= [J_0 \mathbf{F}_0^{-1}\widehat{\mathbf{D}}\mathbf{F}^{-T}_0]\txy,\label{def:D_0}
\end{align}
where
\begin{equation}\label{def:F_0}
\mathbf{F}_0(t,x,y) = \mathbf{E}_n + \nabla_x \bfu\tx + \sum_{i,j = 1}^n \bfe_x(\bfu)_{ij}\tx \nabla_y \boldsymbol{\chi}_{ij}(y).
\end{equation}
Here, $\bfu = \bfu(t,x)$ and $\bfc = \bfc(t,x)$ denote the zeroth-order terms of the expansions \eqref{eq:two_scale_expansions} which are in fact independent of the microscopic variable. The first-order terms in \eqref{eq:two_scale_expansions} are given in terms of the zero-order terms and the cell solutions via
\begin{equation}\label{eq:correctors}
\eps \bfu_1\txxe = \eps \sum_{i,j=1}^n \bfe(\bfu)_{ij}(t,x) \bfchi_{ij}\left(\frac{x}{\eps}\right), \quad \eps c_1^m\txxe = \eps \sum_{i=1}^n \partial_{x_i}c^m(t,x) \eta_i\txxe, 
\end{equation}
for $m = 1,...,N_c$. \par
In the definition of the effective elasticity tensor $\Ahom$, see \eqref{eq:Ahom}, and in the formulation of the elasticity cell problems, see \eqref{eq:elasticity_cell_problems}, we have used the notation $\textbf{M}_{ij} := \frac{1}{2}(\bfe_i \otimes \bfe_j + \bfe_j \otimes \bfe_i) \in \mathbb{S}^n$, $i,j=1,...,n$, where $\bfe_i$, denotes the $i$-th canonical basis vector of $\R^n$ and $\otimes \colon \R^n \times \R^n \rightarrow \R^{n \times n}$ is the dyadic product. Here $\mathbb{S}^n := \{ \textbf{B} \in \R^{n\times n} \mid \textbf{B} = \textbf{B}^T \}$ denotes the space of symmetric matrices. 
\section{Global existence in time and uniqueness for the effective micro-macro model}
\label{sec:analysis}
\paragraph{Notation}
We will use the following notations for function spaces of periodic functions. $\C^\infty_\text{per}(\widebar{\omega})$ is the space of infinitely differentiable functions on $\omega$ whose derivatives can be continuously extended to the boundary and which are $1$-periodic in $y_1,...,y_n$. $H^1_\text{per}(\omega)$ is the closure of $\C^\infty_\text{per}(\widebar{\omega})$ with respect to the norm $\norm{.}_{H^1(Y^s)}$. Additionally, we introduce $W_\text{per}(\omega) := \{ v \in H^1_\text{per}(\omega) \mid \int_{Y^s} v \textrm{d}y = 0 \}$. Moreover, we write short $\Om_T := (0,T) \times \Om$. For two vectors $\bs{\xi} = (\xi_1,...,\xi_n)^T,\bs{\eta} = (\eta_1,...,\eta_n)^T \in \R^n$ and two matrices $\textbf{A} = (A_{ij})_{i,j=1,...,n}, \textbf{B} = (B_{ij})_{i,j=1,...,n}\in \R^{n\times n}$, we define the scalar products
$\bs{\xi} \cdot \bs{\eta} := \sum_{i=1}^n \xi_i  \eta_i$  and $\textbf{A}:\textbf{B} := \sum_{i,j=1}^n A_{ij}B_{ij}.$ In addition, we write for the corresponding norms $|\bs{\xi}|:= (\bs{\xi}\cdot \bs{\xi})^{\frac{1}{2}}$ and  $|\textbf{A}|:= (\textbf{A}:\textbf{A})^{\frac{1}{2}}$. $\textbf{E}_n$ is the identity matrix of size $n\times n$. Furthermore, we use the norms $\norm{\textbf{A}}_2 := \sup_{|\bs{\xi}|= 1} |\textbf{A}\bs{\xi}|$ and $\norm{\textbf{A}}_\infty := \max_{i,j=1,...,n} |A_{ij}|$.
\paragraph{Assumptions on the data}
\label{assumptions}
\begin{itemize}
\item[(A1)] The boundary $\partial \Omega$ of the macroscopic domain $\Omega$ is of class $\mathcal{C}^{3}$.
\item[(A2)] The boundary $\partial \omega$ of the unbounded $1$-periodic domain $\omega$ is of class $\mathcal{C}^\infty$ and the boundary of the reference cell $\partial Y^s$  is of class $\mathcal{C}^{0,1}$. 
\item[(A3)] The elasticity tensor $\textbf{A}$ for the microscopic problem is a constant fourth order tensor $\textbf{A} \in \mathcal{L}(\mathbb{S}^n,\mathbb{S}^n)$ which fulfills the symmetry properties
\begin{equation}
\label{eq:symmetry_of_A}
A_{ijkl} = A_{jikl} = A_{klij} \quad \text{ for } i,j,k,l = 1,...,n, 
\end{equation}
and the ellipticity condition on the space of symmetric matrices
$$
\mathbf{A}\mathbf{B}:\mathbf{B} \geq \alpha |\textbf{B}|^2 \quad \text{ for all } \mathbf{B} \in \mathbb{S}^n \text{ with } \alpha > 0. 
$$
\item[(A4)] The diffusion tensor $\widehat{\mathbf{D}}$ for the microscopic problem is a constant second order tensor $\widehat{\textbf{D}} \in \mathcal{L}(\R^n, \R^n)$ which is symmetric, i.e.
\begin{equation} \label{eq:symmetry_of_D_hat}
\widehat{D}_{ij} = \widehat{D}_{ji} \quad \text{ for } i,j = 1,...,n,
\end{equation}
and fulfills the ellipticity condition
\begin{equation}\label{eq:ellipticity_of_D_hat}
\widehat{\mathbf{D}}\boldsymbol{\xi}\cdot \boldsymbol{\xi} \geq \delta |\boldsymbol{\xi}|^2 \quad \text{ for all } \boldsymbol{\xi} \in \R^n \text{ with } \delta > 0,
\end{equation}
\item[(A5)] Let $\sigma \in (0,1)$. The body force density $\bff_e$ is given by $\bff_e\tx = \nabla_x \cdot \textbf{F}_e\tx$ with $\textbf{F}_e \in \mathcal{C}^1([0,T];\mathcal{C}^{1,\sigma}(\widebar{\Omega})^{n\times n})$. The \textit{Dirichlet} boundary function satisfies $\bfu_D \in \mathcal{C}^1([0,T];\mathcal{C}^{2,\sigma}(\widebar{\Omega})^{n})$.
\item[(A6)] The reaction term $\bff_d := (f_d^1, ..., f_d^{N_c})^T$ of the macroscopic diffusion problem satisfies $\bff_d \in \mathcal{C}^{0,1}(\R^{N_c}; \R^{N_c})$, and the initial value $\bfc_0 := (c_0^1, ..., c_0^{N_c})^T$ satisfies $\bfc_0 \in H^1_0(\Omega)^{N_c}$. 
\end{itemize}
%
%
Let us now state the main results of this section.
\begin{theorem}\label{theorem:main_theorem_1}
Suppose the assumptions (\ass{A1})-(\ass{A3}) and (\ass{A5}) are satisfied. Then, there exists a unique solution 
$$
(\bfchi_{ij}, \bfu) \in  \mathcal{C}^\infty_{per}(\widebar{\omega})^n \times \mathcal{C}^1([0,T], \mathcal{C}^{2,\sigma}(\widebar{\Om})^n), \quad i,j=1,...,n,
$$ of the elasticity subsystem \eqref{eq:effective_micro_macro_model:a}-\eqref{eq:effective_micro_macro_model:b}, \eqref{eq:elasticity_cell_problems} and we have
\begin{equation}  \label{eq:u_full_estimate}
\norm{\bfu}_{\mathcal{C}^1([0,T];\,\mathcal{C}^{2,\sigma}(\widebar{\Omega})^n)} \leq C_\bfu \left( \norm{\textbf{F}_e}_{\mathcal{C}^1([0,T];\,\mathcal{C}^{1,\sigma}(\widebar{\Omega})^{n\times n})} + \norm{\bfu_D}_{\mathcal{C}^1([0,T];\,\mathcal{C}^{2,\sigma}(\widebar{\Omega})^n)} \right)
\end{equation}
with $C_\bfu = C_\bfu(\Omega, \sigma,\Ahom) > 0$.
\end{theorem}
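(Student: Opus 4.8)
The plan is to decouple the statement into its two independent pieces — the elasticity cell problems \eqref{eq:elasticity_cell_problems} and the macroscopic elasticity problem \eqref{eq:effective_micro_macro_model:a}--\eqref{eq:effective_micro_macro_model:b} — and to treat each by classical elliptic regularity theory, since neither involves the transport subsystem. For the cell problems, I would first establish existence and uniqueness in $W_{\text{per}}(\omega)^n$ by the Lax--Milgram theorem: the bilinear form $\bfv \mapsto \int_{Y^s} \textbf{A}(\textbf{M}_{ij} + \bfe_y(\bfv)):\bfe_y(\cdot)\,\textrm{d}y$ is coercive on $W_{\text{per}}(\omega)^n$ by the ellipticity of $\textbf{A}$ from (\ass{A3}) together with a Korn-type inequality on the periodicity cell modulo constants (the rigid-body rotations are excluded by periodicity, so only the zero-mean normalization $\int_{Y^s}\bfchi_{ij} = 0$ is needed to kill translations). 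The right-hand side $-\nabla_y\cdot(\textbf{A}\textbf{M}_{ij})$ vanishes since $\textbf{M}_{ij}$ is constant, so the data effectively live in the Neumann boundary term; the weak formulation is the Euler--Lagrange equation for minimizing $\int_{Y^s}\textbf{A}(\textbf{M}_{ij}+\bfe_y(\bfv)):(\textbf{M}_{ij}+\bfe_y(\bfv))$. Then I would bootstrap: by (\ass{A2}) the boundary $\partial\omega$ is $\mathcal{C}^\infty$ and $\textbf{A}$ is constant, so interior and boundary Schauder/$L^p$ estimates for the (constant-coefficient, elliptic, Neumann) linear elasticity system upgrade $\bfchi_{ij}$ to $\mathcal{C}^\infty_{\text{per}}(\widebar\omega)^n$ — there is no loss because the data (the constant matrix $\textbf{M}_{ij}$ entering the Neumann condition and the smooth geometry) are smooth. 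This part is completely standard.

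For the macroscopic problem, I would first check that $\Ahom$ defined by \eqref{eq:Ahom} is a constant fourth-order tensor satisfying the same structural properties as $\textbf{A}$: symmetry $A^*_{ijkl} = A^*_{jikl} = A^*_{klij}$ follows from the symmetry of $\textbf{A}$ in (\ass{A3}) and the symmetry of the bilinear form in \eqref{eq:Ahom}, and ellipticity $\Ahom\textbf{B}:\textbf{B} \geq \alpha^*|\textbf{B}|^2$ for symmetric $\textbf{B}$ follows from the variational characterization of $\Ahom$ (the cell solution $\bfchi_{ij}$ minimizes, so $A^*_{ijkl}B_{ij}B_{kl} = \int_{Y^s}\textbf{A}(\bfe_y(\bfchi_{\textbf{B}})+\textbf{B}):(\bfe_y(\bfchi_{\textbf{B}})+\textbf{B}) \geq \alpha\int_{Y^s}|\bfe_y(\bfchi_{\textbf{B}})+\textbf{B}|^2 \geq \alpha|Y^s|\,|\textbf{B}|^2$ by Jensen/averaging since $\int_{Y^s}\bfe_y(\bfchi_{\textbf{B}}) = 0$). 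With $\Ahom$ a genuine elliptic constant-coefficient elasticity tensor, the problem \eqref{eq:effective_micro_macro_model:a}--\eqref{eq:effective_micro_macro_model:b} is, at each fixed $t \in [0,T]$, a linear elasticity system with inhomogeneous Dirichlet data $\bfu_D(t,\cdot)$ on a $\mathcal{C}^3$ domain and source $|Y^s|\bff_e(t,\cdot) = |Y^s|\nabla_x\cdot\textbf{F}_e(t,\cdot)$ in divergence form. Classical Schauder theory (Agmon--Douglis--Nirenberg for elliptic systems, or reduction to the scalar case via the constant-coefficient structure) gives a unique solution $\bfu(t,\cdot) \in \mathcal{C}^{2,\sigma}(\widebar\Omega)^n$ together with the estimate $\norm{\bfu(t,\cdot)}_{\mathcal{C}^{2,\sigma}(\widebar\Omega)^n} \leq C(\norm{\textbf{F}_e(t,\cdot)}_{\mathcal{C}^{1,\sigma}(\widebar\Omega)^{n\times n}} + \norm{\bfu_D(t,\cdot)}_{\mathcal{C}^{2,\sigma}(\widebar\Omega)^n})$, with $C$ depending only on $\Omega$, $\sigma$, $\Ahom$ (the $\mathcal{C}^3$ regularity of $\partial\Omega$ in (\ass{A1}) is what licenses $\mathcal{C}^{2,\sigma}$ up to the boundary).

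The time regularity is obtained by differentiating the equation formally in $t$: since $\Ahom$ is time-independent, $\partial_t\bfu$ solves the same elliptic system with data $|Y^s|\nabla_x\cdot\partial_t\textbf{F}_e$ and Dirichlet value $\partial_t\bfu_D$, both in $\mathcal{C}^{0}([0,T];\mathcal{C}^{1,\sigma})$ resp. $\mathcal{C}^{0}([0,T];\mathcal{C}^{2,\sigma})$ by (\ass{A5}). Applying the same Schauder estimate to this problem and using linearity of the solution operator (hence continuity in $t$ of $\bfu$ and $\partial_t\bfu$ via continuity of the data) gives $\bfu \in \mathcal{C}^1([0,T];\mathcal{C}^{2,\sigma}(\widebar\Omega)^n)$, and adding the two estimates yields \eqref{eq:u_full_estimate} with $C_\bfu$ absorbing $|Y^s|$ and the Schauder constant. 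I do not expect a genuine obstacle here; the only points requiring a little care are (i) verifying the Korn inequality on the perforated periodicity cell with the zero-mean normalization, which depends on $Y^s$ being connected — presumably part of the standing geometric hypotheses — and (ii) justifying that the difference quotients in $t$ converge, which is immediate from the $\mathcal{C}^1$-in-$t$ regularity of the data and the boundedness of the linear solution operator. In short, the whole theorem is an assembly of standard elliptic results once one records that $\Ahom$ inherits symmetry and ellipticity from $\textbf{A}$.
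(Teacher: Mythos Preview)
Your proposal is correct and follows essentially the same route as the paper: cell problems via Lax--Milgram/Korn plus bootstrap, properties of $\Ahom$, fixed-$t$ Schauder estimate, then difference quotients in $t$ using linearity of the solution operator. Two small points are worth flagging. First, the paper makes explicit one step you elide: to invoke Schauder theory for elliptic \emph{systems} (the paper cites Giaquinta--Martinazzi rather than ADN, but it is the same circle of ideas), one needs the Legendre--Hadamard condition $\sum A^*_{ijkl}\eta_k\xi_l\eta_i\xi_j \geq \lambda|\bfeta|^2|\bfxi|^2$, not merely ellipticity on symmetric matrices; the paper derives this from your ellipticity statement by the short rank-one computation $\Ahom(\bfeta\otimes\bfxi):(\bfeta\otimes\bfxi) = \Ahom\textbf{B}^S:\textbf{B}^S \geq \alpha|\textbf{B}^S|^2 \geq \tfrac{\alpha}{2}|\bfeta|^2|\bfxi|^2$. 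Second, your parenthetical alternative ``reduction to the scalar case via the constant-coefficient structure'' does not work here: the paper emphasizes that $\Ahom$ is in general \emph{anisotropic} even when $\textbf{A}$ is isotropic, so one really must appeal to regularity theory for general constant-coefficient elliptic systems rather than to the isotropic (Lam\'e) case treated e.g.\ in Ciarlet.
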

\begin{theorem}\label{theorem:main_theorem_2}
Suppose assumptions (\ass{A1})-(\ass{A6}) are satisfied and and let
\begin{equation}\label{eq:smallness_assumption}
\norm{\textbf{F}_e(t)}_{\mathcal{C}^{1,\sigma}(\widebar{\Om})^{n \times n}} + \norm{\bfu_D(t)}_{\mathcal{C}^{2,\sigma}(\widebar{\Om})^n} < \frac{1}{(n+n^3C_{\bfchi})C_\bfu}, \quad \text{for all } t \in [0,T],
\end{equation}
with $C_\bfu$ from \eqref{eq:u_full_estimate} and
\begin{equation}\label{eq:def_c_chi}
   C_{\bfchi} := \sup_{y \in \widebar{Y^s}, i,j = 1,...,n} \norm{\nabla_y \bfchi_{ij}(y)}_\infty.
\end{equation}
Then, for $i=1,...,n$, there exists a unique solution
$$
(\eta_i, \bfc) \in \mathcal{C}^1([0,T]; \C^{1,\sigma}(\widebar{\Om};W_{per}(\omega))) \times L^2(0,T; H^2(\Om)^{N_c})\cap L^\infty(0,T;H^1_0(\Om)^{N_c}),
$$
of the transport subsystem with $\partial_t\bfc \in L^2(0,T;L^2(\Om)^{N_c})$, $\bfc(0) = \bfc^{0}$, which satisfy, for $i=1,...,n$, and for all $\zeta \in W_\text{per} (\omega)$ the variational problem in $Y^s$
\begin{equation} \label{eq:variational_dcps}
\int_{Y^s} \bfD_0(t,x,y) \nabla_y \eta_i(t,x,y) \cdot \nabla_y \zeta(y) \textrm{d}y = -\int_{Y^s} \bfD_0(t,x,y) \bfe_i \cdot \nabla_y \zeta(y) \textrm{d}y
\end{equation}
for all $(t,x) \in \widebar{\Om}_T$ and for $m=1,...,N_c$ and for all $\varphi \in H^1_0(\Om)$ the variational problem in $\Om$
\begin{equation}\label{eq:variational_mdp}
\int_{\Omega} \partial_t c^m \varphi \textrm{d}x + \int_\Om \left( \frac{1}{\Jhom}\Dhom \nabla c^m \cdot \nabla \varphi - \frac{1}{{\Jhom}^2}{\Dhom}^T \nabla \Jhom \cdot \nabla c^m \varphi+ \frac{\partial_t \Jhom}{\Jhom}c^m \varphi \right) \textrm{d}x = \int_\Om  f_d^m(\bfc) \varphi \textrm{d}x
\end{equation}
for all $t \in (0,T)$.
\end{theorem}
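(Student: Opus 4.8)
The plan is to first handle the diffusion cell problems and pin down the coefficients $\Jhom$, $\Dhom$ of the macroscopic transport equation — this is where the smallness assumption \eqref{eq:smallness_assumption} enters — and only then to treat \eqref{eq:variational_mdp} as a semilinear parabolic system with fixed, well-behaved coefficients. \emph{Step 1 (the deformation gradient and $\bfD_0$).} By Theorem~\ref{theorem:main_theorem_1}, $\bfu$ and the $\bfchi_{ij}$ exist with the stated regularity, so $\bfF_0$ from \eqref{def:F_0} lies in $\mathcal{C}^1([0,T];\mathcal{C}^{1,\sigma}(\widebar{\Om};\mathcal{C}^\infty_{per}(\widebar{\omega})^{n\times n}))$. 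Writing $\bfF_0 - \textbf{E}_n = \nabla_x\bfu + \sum_{i,j}\bfe_x(\bfu)_{ij}\nabla_y\bfchi_{ij}$ and estimating each factor in the operator norm $\norm{\cdot}_2$ via \eqref{eq:def_c_chi} gives $\norm{\bfF_0(t,x,y)-\textbf{E}_n}_2 \le (n+n^3 C_{\bfchi})\,\norm{\bfu(t)}_{\mathcal{C}^{2,\sigma}(\widebar{\Om})^n}$ for all $(t,x,y)$; combining this with the pointwise-in-$t$ Schauder bound underlying \eqref{eq:u_full_estimate} and with \eqref{eq:smallness_assumption} yields $\norm{\bfF_0 - \textbf{E}_n}_2 < 1$ uniformly on $\widebar{\Om}_T\times Y^s$. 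From this I would conclude that $\bfF_0$ is invertible with $\bfF_0^{-1}$ uniformly bounded, that $J_0 = \det\bfF_0$ is bounded above and below by positive constants, and that $\bfD_0 = J_0\bfF_0^{-1}\widehat{\bfD}\bfF_0^{-T}$ is symmetric by (\ass{A4}), bounded and uniformly elliptic on $\widebar{\Om}_T\times Y^s$ with constant $\delta_0>0$; since $J_0$ and $\bfD_0$ are rational in the entries of $\bfF_0$ with non-vanishing denominator, they inherit its $\mathcal{C}^1$-in-$t$, $\mathcal{C}^{1,\sigma}$-in-$x$ regularity.

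\emph{Step 2 (cell problems and homogenized coefficients).} For fixed $(t,x)$ the form $(\phi,\zeta)\mapsto\int_{Y^s}\bfD_0(t,x,y)\nabla_y\phi\cdot\nabla_y\zeta\,\textrm{d}y$ is continuous and, by Step~1 and the Poincar\'e--Wirtinger inequality, coercive on $W_{per}(\omega)$, so I would invoke Lax--Milgram to obtain a unique $\eta_i(t,x,\cdot)\in W_{per}(\omega)$ solving \eqref{eq:variational_dcps}. For the joint regularity I would differentiate this identity formally in $x_k$ and in $t$: $\partial_{x_k}\eta_i$ and $\partial_t\eta_i$ solve cell problems of the same type with right-hand sides built from $\partial_{x_k}\bfD_0$, $\partial_t\bfD_0$ and $\eta_i$, and the corresponding difference quotients converge by coercivity together with the $(t,x)$-regularity of $\bfD_0$, yielding $\eta_i\in\mathcal{C}^1([0,T];\mathcal{C}^{1,\sigma}(\widebar{\Om};W_{per}(\omega)))$. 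Feeding $J_0$, $\bfD_0$ and the $\eta_i$ into \eqref{eq:Jhom}--\eqref{eq:Dhom}, I would check that $\Jhom,\Dhom\in\mathcal{C}^1([0,T];\mathcal{C}^{1,\sigma}(\widebar{\Om}))$, that $\Jhom$ is bounded above and below by positive constants, that $\Dhom$ is symmetric, and that $\Dhom$ is uniformly elliptic via the identity $\Dhom(t,x)\xi\cdot\xi = \int_{Y^s}\bfD_0(t,x,y)\,w\cdot w\,\textrm{d}y$ with $w = \xi + \nabla_y\big(\sum_i \xi_i\eta_i(t,x,y)\big)$, the pointwise ellipticity of $\bfD_0$, and $\int_{Y^s}|w|^2\,\textrm{d}y \ge |Y^s|\,|\xi|^2$ (from $\int_{Y^s}\nabla_y\eta_i\,\textrm{d}y = 0$ and Jensen). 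In particular the coefficients $\tfrac{1}{\Jhom}\Dhom$, $\tfrac{1}{{\Jhom}^2}{\Dhom}^T\nabla\Jhom$ and $\tfrac{\partial_t\Jhom}{\Jhom}$ appearing in \eqref{eq:variational_mdp} are bounded and continuous on $\widebar{\Om}_T$, with $\tfrac{1}{\Jhom}\Dhom$ symmetric, Lipschitz in $x$ and uniformly elliptic.

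\emph{Step 3 (the macroscopic transport problem).} Problem \eqref{eq:variational_mdp} is then a uniformly parabolic linear system coupled only through the globally Lipschitz term $\bff_d(\bfc)$ (\ass{A6}), with bounded, time-continuous coefficients by Step~2. I would construct a unique solution in $C([0,T];L^2(\Om)^{N_c})\cap L^2(0,T;H^1_0(\Om)^{N_c})$ by a Banach fixed-point argument: for given $\widetilde{\bfc}$ the decoupled linear parabolic equations with sources $f_d^m(\widetilde{\bfc})$ are solved by Galerkin approximation plus energy estimates, and the solution operator is a contraction on $C([0,T];L^2(\Om)^{N_c})$ in an exponentially weighted norm, by the Lipschitz bound on $\bff_d$ and Gr\"onwall; global Lipschitz-ness of $\bff_d$ makes this work on all of $[0,T]$, and uniqueness follows by subtracting two solutions and applying Gr\"onwall. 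For the higher regularity I would use $\bfc_0\in H^1_0(\Om)^{N_c}$, test with $\partial_t c^m$ to get $\bfc\in L^\infty(0,T;H^1_0(\Om)^{N_c})$ and $\partial_t\bfc\in L^2(0,T;L^2(\Om)^{N_c})$, and then, since $\partial\Om\in\mathcal{C}^3$ (\ass{A1}) and $\tfrac{1}{\Jhom}\Dhom$ is Lipschitz in $x$, apply $H^2$-elliptic regularity for a.e.\ $t$ (with $c^m(t)$ solving an elliptic equation with $L^2(\Om)$ right-hand side) to reach $\bfc\in L^2(0,T;H^2(\Om)^{N_c})$, which gives all assertions.

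\emph{Main obstacle.} The hard part is Steps~1--2: the smallness condition \eqref{eq:smallness_assumption} must be exploited precisely enough to keep $\bfF_0$ close to the identity, so that $\bfD_0$ and hence $\Dhom$ remain uniformly elliptic, and simultaneously the $\mathcal{C}^1$-in-$t$, $\mathcal{C}^{1,\sigma}$-in-$x$ regularity of the elasticity data must be carried through the coefficient $\bfD_0$ (which is nonlinear in $\bfu$) and through the parameter-dependent diffusion cell problems down to the coefficients of \eqref{eq:variational_mdp}. Once these structural properties of the coefficients are in hand, Step~3 is routine parabolic theory.
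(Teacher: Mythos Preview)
Your overall strategy---first exploiting \eqref{eq:smallness_assumption} to control $\bfF_0$, $J_0$, $\bfD_0$, then Lax--Milgram plus difference quotients for the $\eta_i$, then building $\Jhom$, $\Dhom$, and finally treating \eqref{eq:variational_mdp} by Galerkin/energy methods---is exactly the route the paper takes (Propositions~\ref{proposition:properties_of_J0_D0}, \ref{prop:existence_uniqueness_regularity_eta_i}, \ref{proposition:properties_of_Jhom_Dhom}, then a reference to standard parabolic theory). Your identification of the main obstacle is also spot on.

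There is, however, a genuine gap in your ellipticity argument for $\Dhom$. You claim $\int_{Y^s}\nabla_y\eta_i\,\textrm{d}y = 0$, but this fails on a \emph{perforated} cell: integrating by parts gives $\int_{Y^s}\partial_{y_k}\eta_i\,\textrm{d}y = \int_{\partial Y^s}\eta_i\, n_k\,\textrm{d}S$, and while the contributions from the periodic faces cancel, the integral over the internal boundary $\Gamma$ has no reason to vanish. Hence your Jensen lower bound $\int_{Y^s}|w|^2\,\textrm{d}y \ge |Y^s|\,|\xi|^2$ is unjustified. The paper instead argues by contradiction in the style of \cite{cioranescu1999}: if $\Dhom\bfxi\cdot\bfxi = 0$ for some $\bfxi\neq 0$, then $\nabla_y w = 0$ on $Y^s$, forcing $\sum_i\xi_i\eta_i(t,x,y) = c(t,x) - \sum_i\xi_i y_i$, which is impossible since the left side is $Y$-periodic and the right side is not. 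This yields pointwise positive definiteness, and uniform ellipticity then follows from continuity on the compact set $\widebar{\Om}_T$. With this fix, your Step~2 goes through and the rest of the argument is fine.
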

\subsection{Regularity of the displacement}
We start by summarizing known results about the elasticity cell solutions and the effective elasticity tensor.
\begin{proposition}
\label{prop:elasticity_cell_solutions_and_Ahom}
Suppose assumptions (\ass{A2}) and (\ass{A3}) are satisfied. Then, for $i,j=1,...,n$, there exists a unique solution $\bfchi_{ij} \in \mathcal{C}^\infty_{per}(\widebar{\omega})^n$ to the elasticity cell problem \eqref{eq:elasticity_cell_problems}.\\
The effective elasticity tensor $\Ahom$ defined in \eqref{eq:Ahom} fulfills the same symmetry properties as required for $\textbf{A}$ in \eqref{eq:symmetry_of_A}, i.e.
\begin{equation}
\label{eq:Ahom_symmetry}
A^*_{ijkl} = A^*_{klij} = A^*_{jikl} \quad \text{ for } i,j,k,l = 1,...,n,
\end{equation}
and is elliptic on the space of symmetric matrices, i.e.
\begin{equation}
\label{eq:Ahom_coercivity}
\Ahom \textbf{B}:\textbf{B} \geq \alpha |\textbf{B}|^2 \quad \text{ for all } \textbf{B} \in \mathbb{S}^n \text{ with } \alpha > 0 \text{ from (A3)}.
\end{equation}
\end{proposition}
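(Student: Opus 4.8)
The plan is to establish Proposition~\ref{prop:elasticity_cell_solutions_and_Ahom} in two independent parts: first the well-posedness and regularity of the cell solutions $\bfchi_{ij}$, then the symmetry and ellipticity of $\Ahom$.

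For the existence, uniqueness and smoothness of $\bfchi_{ij}$, I would work in the Hilbert space $W_\text{per}(\omega)^n$ (periodic functions with zero mean over $Y^s$) and write down the weak formulation of \eqref{eq:elasticity_cell_problems}: find $\bfchi_{ij} \in W_\text{per}(\omega)^n$ such that $\int_{Y^s} \textbf{A}(\textbf{M}_{ij} + \bfe_y(\bfchi_{ij})) : \bfe_y(\bfv)\,\textrm{d}y = 0$ for all $\bfv \in W_\text{per}(\omega)^n$. The bilinear form $a(\bfchi,\bfv) = \int_{Y^s} \textbf{A}\bfe_y(\bfchi):\bfe_y(\bfv)\,\textrm{d}y$ is bounded by (\ass{A3}); coercivity on $W_\text{per}(\omega)^n$ follows from the ellipticity of $\textbf{A}$ on $\mathbb{S}^n$ combined with a Korn inequality on the periodicity cell for zero-mean periodic functions (the zero-mean normalization kills the rigid-body kernel, so no further quotient is needed). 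Lax--Milgram then yields a unique weak solution. For the regularity $\bfchi_{ij} \in \mathcal{C}^\infty_\text{per}(\widebar{\omega})^n$, I would invoke interior and up-to-the-boundary elliptic regularity for the linear elasticity system with homogeneous Neumann data on $\Gamma$: since the right-hand side is zero (after moving the constant $\textbf{M}_{ij}$ term, which contributes a smooth, in fact constant, inhomogeneity) and the boundary $\partial\omega$ is $\mathcal{C}^\infty$ by (\ass{A2}), a bootstrap argument gives $\bfchi_{ij} \in H^k_\text{per}$ for every $k$, hence $\mathcal{C}^\infty$ by Sobolev embedding; periodicity is preserved throughout.

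For the claimed symmetries $A^*_{ijkl} = A^*_{klij} = A^*_{jikl}$: the symmetry $\textbf{M}_{ij} = \textbf{M}_{ji}$ is immediate from the definition, and testing the cell problem for $\bfchi_{ij}$ with $\bfchi_{kl}$ (and vice versa) together with the symmetry $A_{ijkl}=A_{klij}$ of $\textbf{A}$ lets one rewrite \eqref{eq:Ahom} symmetrically, absorbing the cross terms: one shows $A^*_{ijkl} = \int_{Y^s}\textbf{A}(\bfe_y(\bfchi_{kl})+\textbf{M}_{kl}):\textbf{M}_{ij}\,\textrm{d}y$ by using that the $\bfe_y(\bfchi_{ij})$ contribution vanishes against $\bfe_y(\bfchi_{kl})+\textbf{M}_{kl}$ by the weak formulation, and symmetrically, which makes the $klij\leftrightarrow ijkl$ exchange manifest. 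For ellipticity, given $\textbf{B} = (B_{ij}) \in \mathbb{S}^n$, set $\bfchi_\textbf{B} := \sum_{ij} B_{ij}\bfchi_{ij}$; by linearity $\Ahom\textbf{B}:\textbf{B} = \int_{Y^s}\textbf{A}(\bfe_y(\bfchi_\textbf{B}) + \textbf{B}):(\bfe_y(\bfchi_\textbf{B}) + \textbf{B})\,\textrm{d}y$ (here I am identifying $\textbf{B}=\sum_{ij}B_{ij}\textbf{M}_{ij}$ since $\textbf{B}$ is symmetric), and since the integrand is of the form $\textbf{A}\textbf{S}:\textbf{S}$ with $\textbf{S} = \bfe_y(\bfchi_\textbf{B})+\textbf{B} \in \mathbb{S}^n$ pointwise, (\ass{A3}) gives $\Ahom\textbf{B}:\textbf{B} \geq \alpha\int_{Y^s}|\bfe_y(\bfchi_\textbf{B})+\textbf{B}|^2\,\textrm{d}y \geq \alpha\, |\textbf{B}|^2\,|Y^s|$ after noting that $\int_{Y^s}\bfe_y(\bfchi_\textbf{B})\,\textrm{d}y = 0$ by periodicity (so the cross term drops and $\int|\bfe_y(\bfchi_\textbf{B})+\textbf{B}|^2 \geq \int |\textbf{B}|^2$); alternatively one normalizes so the $|Y^s|$ factor is absorbed, matching the stated constant $\alpha$.

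The main obstacle is the regularity claim $\bfchi_{ij}\in\mathcal{C}^\infty_\text{per}$: one must be careful that the elliptic regularity theory applies across the periodic cell — working on the unbounded periodic domain $\omega$ (or equivalently on the torus away from $\Gamma$ plus a standard boundary-regularity argument near $\Gamma$) rather than on $Y^s$ with artificial cube-face boundaries, so that periodicity is genuinely built in and no spurious boundary conditions on $\partial(0,1)^n$ interfere. This is, however, classical for linear elasticity systems (cf.\ the homogenization references cited), so I would state it citing the standard elliptic regularity results and focus the written detail on the Korn/Lax--Milgram step and the algebraic symmetry/ellipticity identities for $\Ahom$.
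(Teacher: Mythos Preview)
Your proposal is correct and follows the classical argument; the paper itself does not write out any of these details but simply cites \cite{oleinik1992} (Chapter~I, Theorem~6.2 for the cell solutions and Chapter~II, Theorem~1.1 for the properties of $\Ahom$), and what you have sketched is precisely the content of those results. Your observation about the factor $|Y^s|$ in the ellipticity constant is well taken: the standard argument you give yields $\alpha|Y^s|$ rather than $\alpha$, so the constant stated in the proposition is a mild imprecision that your proof exposes rather than a flaw in your reasoning.
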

\begin{proof}
The result concerning the elasticity cell solutions is given, e.g., in \cite[Chapter~I, Theorem~6.2]{oleinik1992}. The properties of $\Ahom$ are shown, e.g., in \cite[Chapter~II, Theorem~1.1]{oleinik1992}.
\end{proof}
To show higher regularity results for the effective displacement $\bfu$, we first remark that the homogenized elasticity tensor $\Ahom$ is in general anisotropic. This has been previously pointed out in the homogenization literature, see e.g. \cite[Chapter 4.4]{bakhvalov1984}, and is also observed in numerical simulations, see e.g. \cite{knoch2023}. Thus, higher regularity results for solutions to linear elasticity problems, as they are given, e.g. in \cite{ciarlet1988}, where isotropic media were considered, are not applicable to our model. Instead, our proof is based on regularity results for more general elliptic systems given in \cite{giaquinta2013}.
\begin{proof}[Proof of Theorem \ref{theorem:main_theorem_1}]
We first show that the homogenized elasticity tensor $\Ahom$ fulfills the \textit{Legendre}-\textit{Hadamard} condition, i.e. $\exists \lambda > 0$ such that
\begin{equation}\label{eq:legendre_hadamard_condition}
\sum_{i,j,k,l=1}^n A^*_{ijkl}\eta_k\xi_l\eta_i\xi_j \geq \lambda |\boldsymbol{\eta}|^2 |\boldsymbol{\xi}|^2, \quad \forall \boldsymbol{\eta}, \boldsymbol{\xi} \in \R^n.
\end{equation}
With $\textbf{B} := \bfeta \otimes \bfxi$, and $\textbf{B}^S, \textbf{B}^A$ the symmetric and anti-symmetric part of $\textbf{B}$, we have
\begin{align*}
\sum_{i,j,k,l=1}^n A^*_{ijkl}\eta_k\xi_l\eta_i\xi_j &= \Ahom \textbf{B}:\textbf{B}  \stackrel{\eqref{eq:Ahom_symmetry}}{=} \Ahom\textbf{B}^S:\textbf{B}^S \\ & \stackrel{\eqref{eq:Ahom_coercivity}}{\geq} \alpha |\textbf{B}^S|^2 = \alpha \left( |\textbf{B}|^2 - \textbf{B}:\textbf{B}^A \right)\\
&= \frac{\alpha}{2} |\textbf{B}|^2 + \frac{\alpha}{2} \textbf{B} : \textbf{B}^T.
\end{align*}
Noting that $|\textbf{B}|^2 = |\bfeta|^2 |\bfxi|^2$ and 
$$
\textbf{B} : \textbf{B}^T = \sum_{i,j}^n \eta_i \xi_j \eta_j \xi_i = \left( \sum_{i=1}^n \eta_i \xi_i \right)^2 \geq 0
$$
shows that $\Ahom$ fulfills \eqref{eq:legendre_hadamard_condition} with $\lambda = \frac{\alpha}{2}$.
Now, from \cite[Section 5.4.5, Theorem 5.21]{giaquinta2013}, it follows that there exists a unique solution $\bfu(t) \in \C^{2,\sigma}(\widebar{\Om})^n$ with
\begin{equation} \label{eq:giaquinta_estimate}
\norm{\bfu(t)}_{\mathcal{C}^{2,\sigma}(\widebar{\Omega})^n} \leq C_{\bfu} \left( \norm{\textbf{F}_e(t)}_{\mathcal{C}^{1,\sigma}(\widebar{\Omega})^{n\times n}} + \norm{\bfu_D(t)}_{\mathcal{C}^{2,\sigma}(\widebar{\Omega})^n} \right) \quad \forall t \in [0,T],
\end{equation}
where $C_{\bfu} = C_\bfu(\Omega, \sigma, \Ahom) > 0.$ %
Finally we show that $\bfu \in \mathcal{C}^1([0,T]; \mathcal{C}^{2,\sigma}(\widebar{\Om})^n)$. To this end, let us consider the following problems for all $t \in (0,T)$:
\begin{equation} \label{eq:pb_for_u}
\begin{aligned}
-\nabla \cdot \left( \Ahom \bfe(\bfu(t)) \right) &= |Y^s|\nabla \cdot \bfF_e(t) & \mbox{ in } & \Om, \\
\bfu(t) &= \bfu_D(t) & \mbox{ on } &  \partial \Om,
\end{aligned}
\end{equation}
\begin{equation}\label{eq:pb_for_u+h}
\begin{aligned}
-\nabla \cdot \left( \Ahom \bfe(\bfu(t+h)) \right) &= |Y^s|\nabla \cdot \bfF_e(t+h) & \mbox{ in } &  \Om,\\
\bfu(t+h) &= \bfu_D(t+h) & \mbox{ on } &  \partial \Om,
\end{aligned}
\end{equation}
for $h>0$ small enough and 
\begin{equation}\label{eq:pb_for_v}
\begin{aligned}
-\nabla \cdot \left( \Ahom \bfe(\bfv(t)) \right) &= |Y^s|\nabla \cdot \bfF'_e(t) & \mbox{ in } &  \Om,\\
\bfv(t) &= \bfu'_D(t) & \mbox{ on } &  \partial \Om.
\end{aligned}
\end{equation}
To obtain the continuity of $\bfu$ with respect to time, let us subtract \eqref{eq:pb_for_u} from \eqref{eq:pb_for_u+h} to get the following problem:
\begin{equation*}
\begin{aligned}
- \nabla \cdot \left( \Ahom \bfe( \bfu(t+h)-\bfu(t) ) \right) &= |Y^s| \nabla \cdot ( \bfF_e(t+h) - \bfF_e(t) ) & \mbox{ in } & \Om,\\
\bfu(t+h) - \bfu(t) & = \bfu_D(t+h) - \bfu_D(t) & \mbox{ on } & \partial \Om
\end{aligned}
\end{equation*}
for all $t \in (0,T)$. With the estimate \eqref{eq:giaquinta_estimate} from the first part of the proof, we infer for all $t \in (0,T)$
\begin{align*}
\lVert \bfu(t+h) & - \bfu(t) \rVert_{\mathcal{C}^{2,\sigma}(\widebar{\Om})^n} \\
& \leq C_\bfu \left( \adjnorm{ \bfF_e(t+h) - \bfF_e(t) }_{\mathcal{C}^{1,\sigma}(\widebar{\Om})^{n \times n}} + \adjnorm{ \bfu_D(t+h) - \bfu_D(t)}_{\mathcal{C}^{2,\sigma}(\widebar{\Om})^n} \right) \\
&  \xrightarrow{h \rightarrow 0} 0,
\end{align*}
due to the continuity of $\bfF_e$ and $\bfu_D$ with respect to time. For the differentiability of $\bfu$, let us subtract \eqref{eq:pb_for_u} from \eqref{eq:pb_for_u+h}, divide by $h>0$ and subtract \eqref{eq:pb_for_v} to obtain, for all $t \in (0,T)$:
\begin{equation*}
\begin{aligned}
- \nabla \cdot \left( \Ahom \bfe \left( \frac{\bfu(t+h)-\bfu(t)}{h} - \bfv(t) \right) \right) &= |Y^s| \nabla \cdot \left( \frac{\bfF_e(t+h) - \bfF_e(t)}{h} - \bfF'_e(t) \right) & \mbox{ in } &  \Om,\\
\frac{\bfu(t+h) - \bfu(t)}{h} - \bfv(t) & = \frac{\bfu_D(t+h) - \bfu_D(t)}{h} - \bfu'_D(t) & \mbox{ on } &  \partial \Om.
\end{aligned}
\end{equation*}
By virtue of \eqref{eq:giaquinta_estimate} we have for all $t \in (0,T)$ that
\begin{align*}
\Bigg \lVert &\frac{\bfu(t+h)-\bfu(t)}{h} - \bfv(t) \Bigg\rVert_{\mathcal{C}^{2,\sigma}(\widebar{\Om})^n} \\
& \leq C_\bfu \left( \adjnorm{ \frac{\bfF_e(t+h) - \bfF_e(t)}{h} - \bfF'_e(t) }_{\mathcal{C}^{1,\sigma}(\widebar{\Om})^{n \times n}} + \adjnorm{ \frac{\bfu_D(t+h) - \bfu_D(t)}{h} - \bfu'_D(t)}_{\mathcal{C}^{2,\sigma}(\widebar{\Om})^n} \right) \\
&  \xrightarrow{h \rightarrow 0} 0,
\end{align*}
due to the differentiability of $\bfF_e$ and $\bfu_D$ with respect to time. Hence we conclude that $\bfu$ is differentiable in time with $\partial_t \bfu = \bfv$. The continuity of $\bfv$ follows with similar steps as before from the continuity of $\partial_t \bfF_e$ and $\partial_t \bfu_D$. Also, we can extend $\bfu$ and $\bfv$ continuously to the boundary of the time interval based on the regularity properties of $\bfF_e$ and $\bfu_D$. Therefore we have $\bfu \in \mathcal{C}^1([0,T], \mathcal{C}^{2,\sigma}(\widebar{\Om})^n)$. Eventually, we obtain with estimate \eqref{eq:giaquinta_estimate}, applied to the problems \eqref{eq:pb_for_u} and \eqref{eq:pb_for_v}, that
\begin{align*}
\sup_{t \in [0,T]} & \adjnorm{\bfu(t)}_{\mathcal{C}^{2,\sigma}(\widebar{\Om})^n} + \sup_{t \in [0,T]} \adjnorm{\bfv(t)}_{\mathcal{C}^{2,\sigma}(\widebar{\Om})^n} \\
& \leq C_\bfu \Big( \sup_{t \in [0,T]} \adjnorm{\bfF_e(t)}_{\mathcal{C}^{1,\sigma}(\widebar{\Om})^{n \times n}} +   \sup_{t \in [0,T]} \adjnorm{\partial_t \bfF_e(t)}_{\mathcal{C}^{1,\sigma}(\widebar{\Om})^{n \times n}} \\
& \qquad \qquad \qquad \qquad \qquad \qquad +  \sup_{t \in [0,T]} \adjnorm{\bfu_D(t)}_{\mathcal{C}^{2,\sigma}(\widebar{\Om})^{n}} +   \sup_{t \in [0,T]} \adjnorm{\partial_t \bfu_D(t)}_{\mathcal{C}^{2,\sigma}(\widebar{\Om})^{n}} \Big),
\end{align*}
which yields \eqref{eq:u_full_estimate}. 
\end{proof}
\subsection{Existence and uniqueness for the effective diffusion problem and the associated cell problems}
The main difficulty in the study of the transport subsystem is to establish the properties of the effective coefficients $\Jhom$ and $\Dhom$, see \eqref{eq:Jhom}-\eqref{eq:Dhom}, among which e.g. the positivity of $\Jhom$ is crucial for the well-posedness of the effective transport equation. To overcome this difficulty, a smallness condition on the data $\bfF_e$ and $\bfu_D$ of the macroscopic elasticity problem is assumed. \par
We start by analyzing the coefficients $J_0$ and $\bfD_0$, see \eqref{def:J_0}-\eqref{def:D_0}, which enter the definition of the effective coefficients and depend nonlinearly on the macroscopic displacement $\bfu$ and the elasticity cell problems $\bfchi_{ij}$.
\begin{proposition} \label{proposition:properties_of_J0_D0} 
Suppose assumptions (\ass{A1})-(\ass{A5}) are satisfied. Let $C_\bfu > 0$ be given as in \eqref{eq:u_full_estimate}, let $C_{\bfchi}>0$ be given as in \eqref{eq:def_c_chi}
and suppose that
\begin{equation}
\norm{\textbf{F}_e(t)}_{\mathcal{C}^{1,\sigma}(\widebar{\Om})^{n \times n}} + \norm{\bfu_D(t)}_{\mathcal{C}^{2,\sigma}(\widebar{\Om})^n} < \frac{1}{(n + n^3C_{\bfchi})C_\bfu} 
\end{equation}
for all $t \in [0,T]$. Then
\begin{enumerate}
\item $J_0 \in \mathcal{C}^1([0,T]; \C^{1,\sigma}(\widebar{\Om}; \mathcal{C}^\infty_\text{per} (\widebar{\omega})))$ with $0 < J_0\txy \leq \Gamma$, where $\Gamma = n^{\frac{n}{2}}\left(1 + \frac{1}{n}\right)^n$, for all $\txy \in \widebar{\Om}_T \times \widebar{Y}^s$.
\item $\textbf{D}_0 \in  \mathcal{C}^1([0,T]; \C^{1,\sigma}(\widebar{\Om}; \mathcal{C}^\infty_\text{per} (\widebar{\omega})^{n\times n}))$ is well-defined, symmetric and elliptic.
\end{enumerate}
\end{proposition}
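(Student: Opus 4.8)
The plan is to deduce everything from the single observation that the smallness condition \eqref{eq:smallness_assumption} forces $\norm{\bfF_0 - \textbf{E}_n}_2$ to stay uniformly below $1$. First I would record the regularity of $\bfF_0$: by Theorem~\ref{theorem:main_theorem_1}, $\bfu \in \mathcal{C}^1([0,T];\C^{2,\sigma}(\widebar{\Om})^n)$, so $\nabla_x\bfu$ and every $\bfe_x(\bfu)_{ij}$ lie in $\mathcal{C}^1([0,T];\C^{1,\sigma}(\widebar{\Om}))$ (and are constant in $y$), while by Proposition~\ref{prop:elasticity_cell_solutions_and_Ahom} each $\nabla_y\bfchi_{ij} \in \mathcal{C}^\infty_\text{per}(\widebar{\omega})^{n\times n}$ (and is constant in $t,x$). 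Since $\C^{1,\sigma}(\widebar{\Om})$ and $\mathcal{C}^\infty_\text{per}(\widebar{\omega})$ are stable under products and under reciprocals of functions bounded away from zero, and the Leibniz and quotient rules preserve $\mathcal{C}^1$-dependence on $t$, the nested space $\mathcal{C}^1([0,T];\C^{1,\sigma}(\widebar{\Om};\mathcal{C}^\infty_\text{per}(\widebar{\omega})))$ is a multiplicative algebra with this stability; in particular, from \eqref{def:F_0}, $\bfF_0 \in \mathcal{C}^1([0,T];\C^{1,\sigma}(\widebar{\Om};\mathcal{C}^\infty_\text{per}(\widebar{\omega})^{n\times n}))$.

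Next I would establish the key pointwise bound. From \eqref{def:F_0} and the definition \eqref{eq:def_c_chi} of $C_{\bfchi}$,
\begin{equation*}
\norm{\bfF_0(t,x,y) - \textbf{E}_n}_\infty \le \norm{\nabla_x\bfu(t)}_\infty + \sum_{i,j=1}^n |\bfe_x(\bfu)_{ij}(t)|\,\norm{\nabla_y\bfchi_{ij}}_\infty \le \norm{\bfu(t)}_{\C^{2,\sigma}(\widebar{\Om})^n}\,(1 + n^2 C_{\bfchi});
\end{equation*}
combining with the pointwise-in-time estimate \eqref{eq:giaquinta_estimate} and $\norm{\textbf{B}}_2 \le n\norm{\textbf{B}}_\infty$ for $\textbf{B}\in\R^{n\times n}$ gives $\norm{\bfF_0(t,x,y) - \textbf{E}_n}_2 \le (n + n^3 C_{\bfchi})C_\bfu\big(\norm{\bfF_e(t)}_{\C^{1,\sigma}(\widebar{\Om})^{n\times n}} + \norm{\bfu_D(t)}_{\C^{2,\sigma}(\widebar{\Om})^n}\big)$. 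Since by (\ass{A5}) the right-hand side is continuous in $t$ on the compact interval $[0,T]$, the smallness assumption \eqref{eq:smallness_assumption} yields a constant $\rho < 1$ with $\norm{\bfF_0 - \textbf{E}_n}_2 \le \rho$ on all of $\widebar{\Om}_T \times \widebar{Y}^s$, and simultaneously $\norm{\bfF_0 - \textbf{E}_n}_\infty \le \rho/n < 1/n$, so $\norm{\bfF_0}_\infty < 1 + 1/n$.

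With this in hand both parts are mostly bookkeeping. For (i): $\norm{\bfF_0 - \textbf{E}_n}_2 < 1$ makes $\bfF_0$ invertible with $\norm{\bfF_0^{-1}}_2 \le (1-\rho)^{-1}$ (Neumann series), and positivity of $\det\bfF_0$ follows from the homotopy $s \mapsto \det(\textbf{E}_n + s(\bfF_0 - \textbf{E}_n))$ on $[0,1]$, which is continuous, nowhere zero, and equals $1$ at $s = 0$; hence $J_0 = \det\bfF_0 > 0$, with the uniform lower bound $J_0 \ge (1-\rho)^n$ (since $\det\bfF_0^{-1} \le \norm{\bfF_0^{-1}}_2^n \le (1-\rho)^{-n}$), while Hadamard's inequality applied to the rows of $\bfF_0$ together with $\norm{\bfF_0}_\infty < 1 + 1/n$ gives $J_0 \le \big(\sqrt{n}(1+1/n)\big)^n = \Gamma$. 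The claimed regularity of $J_0$ is clear because $\det$ is a polynomial in the entries of $\bfF_0$ and the space is an algebra. For (ii): invertibility of $\bfF_0$ makes $\bfD_0 = J_0\bfF_0^{-1}\widehat{\bfD}\bfF_0^{-T}$ well-defined; writing $\bfF_0^{-1} = J_0^{-1}\,\mathrm{adj}(\bfF_0)$, whose entries are polynomials in the entries of $\bfF_0$ divided by $J_0$ — which lies in the algebra and is bounded below by $(1-\rho)^n > 0$ — shows $\bfD_0 \in \mathcal{C}^1([0,T];\C^{1,\sigma}(\widebar{\Om};\mathcal{C}^\infty_\text{per}(\widebar{\omega})^{n\times n}))$. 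Symmetry is immediate from symmetry of $\widehat{\bfD}$ in (\ass{A4}): $\bfD_0^T = J_0\bfF_0^{-1}\widehat{\bfD}^T\bfF_0^{-T} = \bfD_0$. For ellipticity, with $\bs{\zeta} := \bfF_0^{-T}\bfxi$ one has $\bfD_0\bfxi\cdot\bfxi = J_0\,\widehat{\bfD}\bs{\zeta}\cdot\bs{\zeta} \ge \delta J_0|\bs{\zeta}|^2$ by (\ass{A4}); since $|\bfxi| = |\bfF_0^T\bs{\zeta}| \le \norm{\bfF_0}_2|\bs{\zeta}| \le (1+\rho)|\bs{\zeta}| < 2|\bs{\zeta}|$ and $J_0 \ge (1-\rho)^n$, this gives $\bfD_0\bfxi\cdot\bfxi \ge \tfrac14\delta(1-\rho)^n|\bfxi|^2$, a uniform positive lower bound.

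The only genuine difficulty is the second paragraph: converting the scalar smallness condition into the uniform operator-norm bound $\norm{\bfF_0 - \textbf{E}_n}_2 \le \rho < 1$. The combinatorial constant $n + n^3 C_{\bfchi}$ in \eqref{eq:smallness_assumption} is exactly what accounts for the factor $n$ in $\norm{\cdot}_2 \le n\norm{\cdot}_\infty$ and the factor $n^2$ from the double sum over $i,j$ in \eqref{def:F_0}, and the passage from the pointwise-in-$t$ hypothesis to a uniform $\rho < 1$ is where the $\mathcal{C}^1$-in-time regularity of $\bfF_e,\bfu_D$ from (\ass{A5}) and compactness of $[0,T]$ enter. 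Once that bound is in place, the invertibility, the determinant bounds, and the algebra/regularity statements are routine.
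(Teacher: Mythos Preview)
Your proof is correct and follows essentially the same route as the paper: the smallness computation reducing to $\norm{\bfF_0-\textbf{E}_n}_2<1$, the positivity-of-determinant argument via the homotopy $s\mapsto\det(\textbf{E}_n+s(\bfF_0-\textbf{E}_n))$ (which is exactly the paper's Lemma~\ref{lemma:invertibility_pertubation_of_identity}), Hadamard's inequality for the upper bound, and the ellipticity of $\bfD_0$ via the substitution $\bs{\zeta}=\bfF_0^{-T}\bfxi$ all match. Your version is slightly more quantitative in that you extract a uniform $\rho<1$ by compactness and thereby get explicit lower bounds $J_0\ge(1-\rho)^n$ and an explicit ellipticity constant, whereas the paper is content with pointwise positivity and the smallest-eigenvalue argument; this is a mild refinement, not a different approach.
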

\begin{proof}
\begin{enumerate}
\item Remember that according to \eqref{def:J_0} we have
$$
J_0\txy =  \det\left(\textbf{E}_n + \nabla_x \bfu(t,x) + \sum_{i,j=1}^n \bfe(\bfu)_{ij}(t,x) \nabla_y \bfchi_{ij}(y)\right).
$$
Due to the regularity of $\bfchi_{ij}$ and $\bfu$ established in Theorem \ref{theorem:main_theorem_1}, we have 
$$
\nabla_y \bfchi_{ij} \in \mathcal{C}^\infty(\widebar{\omega})^{n\times n} \text{ for $i,j=1,...,n$ and } \nabla_x \bfu \in \mathcal{C}^1([0,T], \mathcal{C}^{1,\sigma}(\widebar{\Om})^{n\times n}).
$$
The regularity of $J_0$ follows since $\det\colon \R^{n\times n} \rightarrow \R$ is a smooth mapping. To show the positivity of $J_0$, we use Lemma \ref{lemma:invertibility_pertubation_of_identity} below. To this end, let us show that
$$
M(t) := \sup_{(x,y) \in \widebar{\Om}\times \widebar{Y^s}} \adjnorm{\nabla_x \bfu\tx + \sum_{i,j=1}^n \bfe(\bfu)_{ij}\tx \nabla_y \bfchi_{ij}(y)}_2 < 1
$$
for all $t \in [0,T]$. Indeed, we have 
\begin{equation}\label{eq:smallness_inequality}
\begin{aligned}
M(t) &\leq   n \sup_{x \in \widebar{\Om}}\norm{\nabla_x \bfu\tx}_\infty \bigl(1+ \sum_{i,j=1}^n \underbrace{\sup_{y \in \widebar{Y^s}}\norm{\nabla_y \bfchi_{ij}(y)}_\infty}_{= C_{\bfchi}} \bigr)  \\
& \stackrel{(*)}{\leq} C_\bfu(n + n^3 C_{\bfchi})\left(\norm{\textbf{F}_e(t)}_{\mathcal{C}^{1,\sigma}(\Om)^{n \times n} }+  \norm{\bfu_D(t)}_{\mathcal{C}^{2,\sigma}(\Om)^n} \right) < 1
\end{aligned}
\end{equation}
for all $t \in [0,T]$. Here, $(*)$ follows from \eqref{eq:giaquinta_estimate} and the last inequality follows from the smallness assumption \eqref{eq:smallness_assumption}. Then, by Lemma \ref{lemma:invertibility_pertubation_of_identity}, we conclude $J_0\txy>0$ for all $\txy \in \widebar{\Om}_T \times \widebar{Y}^s$. %
Finally, to estimate $J_0$ from above, we compute by virtue of \eqref{eq:giaquinta_estimate} and \eqref{eq:smallness_assumption}:
\begin{align*}
\begin{split}
\sup_{\txy \in \widebar{\Om}_T\times \widebar{Y^s}} & \norm{\bfF_0(t,x,y)}_\infty \\
& = \sup_{\txy \in \widebar{\Om}_T\times \widebar{Y^s}}\norm{\textbf{E}_n + \nabla_x \bfu(t,x) + \sum_{i,j=1}^n \bfe(\bfu)_{ij}(t,x) \nabla_y \bfchi_{ij}(y)}_\infty\\
& \leq 1 + \sup_{\txy \in \widebar{\Om}_T\times \widebar{Y^s}} \norm{\nabla_x \bfu(t,x) + \sum_{i,j=1}^n \bfe(\bfu)_{ij}(t,x) \nabla_y \bfchi_{ij}(y)}_\infty\\
& \leq 1 + \sup_{(t,x) \in \widebar{\Om}_T} \norm{\nabla_x \bfu(t,x)}_\infty (1+n^2 C_{\bfchi})\\
& \leq 1 + \frac{1 + n^2C_{\bfchi} }{n + n^3C_{\bfchi}} = 1+\frac{1}{n}.
\end{split}
\end{align*}
Then, by \textit{Hadamard}'s inequality \cite[Corollary~7.8.2]{horn2012}, we have
$$
J_0\txy = \det(\bfF_0\txy) \leq n^{\frac{n}{2}}\left(1 + \frac{1}{n}\right)^n =: \Gamma.
$$
\item Due to i), it holds that $J_0(t,x,y) = \det(\bfF_0\txy)> 0$ for all $(t,x,y) \in [0,T]\times \widebar{\Om} \times \widebar{Y^s}$, which is equivalent to the invertibility of $\bfF_0$; hence $\bfD_0$ is well-defined. The regularity follows since the inverse of a matrix with positive determinant can be computed in terms of the determinant and the cofactor matrix which involves only smooth mappings. The symmetry of $\textbf{D}_0$ follows from the symmetry of $\widehat{\textbf{D}}$:
$$
D_{0,ij} = \sum_{k,l=1}^n J_0 F_{0,ik}^{-1}\widehat{D}_{kl}F_{0,lj}^{-T} \stackrel{\eqref{eq:symmetry_of_D_hat}}{=} \sum_{k,l=1}^n J_0 F_{0,jl}^{-1}\widehat{D}_{lk}F_{0,ki}^{-T} =D_{0,ji}, 
$$
for $i,j = 1,...,n$ and for all $(t,x,y) \in (0,T)\times \Om \times Y^s$. In order to prove the ellipticity of $\bfD_0$, let $\boldsymbol{\xi} \in \R^n$. Then, by using the ellipticity of $\widehat{\bfD}$, we get
\begin{align*}
\bfD_0\bfxi\cdot \bfxi &= J_0 \sum_{i,j,k,l=1}^n F_{0,ik}^{-1}\widehat{D}_{kl}F_{0,lj}^{-T} \xi_j \xi_i = J_0 \sum_{k,l=1}^n \widehat{D}_{kl} \Biggl(\sum_{j=1}^n F_{0,lj}^{-1} \xi_j \Biggr) \Biggl(\sum_{i=1}^n F_{0,ki}^{-1} \xi_i \Biggr)\\
& \stackrel{\eqref{eq:ellipticity_of_D_hat}}{\geq} J_0 \delta |\bfF_0^{-1}\bfxi |^2 \geq 0,
\end{align*}
for all $(t,x,y) \in (0,T)\times \Om \times Y^s$. Equality in the last step is only achieved for $\bfxi \equiv 0$ since $\bfF_0^{-1}$ is a regular matrix. Thus we have shown that $\bfD_0$ is symmetric positive definite and we can conclude
$$
\bfD_0\bfxi\cdot \bfxi \geq \lambda_{\text{min}}|\bfxi|^2
$$
with $\lambda_{\text{min}} > 0$ the smallest eigenvalue of $\bfD_0$.
\end{enumerate}
\end{proof}
For the sake of a self-contained presentation, we state the following result about the invertibility of a perturbation of the identity matrix $\textbf{E}_n$. 
%
\begin{lemma}\label{lemma:invertibility_pertubation_of_identity}
Let $\textbf{B} \in \R^{n\times n}$ with $\norm{\textbf{B}}_2 < 1$. Then $\det(\textbf{E}_n + \textbf{B}) > 0$.
\end{lemma}
\begin{proof}
The proof is adopted from \cite[Theorem 5.5-1]{ciarlet1988}. Let $\tau \in [0,1]$. Then $\textbf{E}_n + \tau \textbf{B}$ is invertible, see e.g. \cite[Section 5.7]{alt2016}, and therefore 
\begin{equation} \label{eq:invertibility_of_E+B}
\det(\textbf{E}_n + \tau \textbf{B}) \neq 0.
\end{equation}
Now, let us define
$$
\delta \colon [0,1] \rightarrow \R,\;  \delta(\tau) = \det(\textbf{E}_n + \tau \textbf{B}).
$$
We have that $\delta$ is continuous with $\delta(0)=1 >0$. If we assume that $\delta(1) < 0$, then, by \textit{Bolzano}'s intermediate zero theorem, there exists $\tau_0 \in (0,1)$ with $\delta(\tau_0) = 0$. This contradicts \eqref{eq:invertibility_of_E+B} and hence $\det(\textbf{E}_n + \textbf{B}) = \delta(1)  > 0$.
\end{proof}
Next we investigate the diffusion cell solutions $\eta_i$, $i=1,...,n$ which solve the variational diffusion cell problems \eqref{eq:variational_dcps}. We show that the regularity of the diffusion cell solutions with respect to $t$ and $x$ is inherited from the coefficient $\bfD_0$. This is crucial for the properties of the effective coefficient $\Dhom$, which is defined in terms of the diffusion cell solutions, see \eqref{eq:Dhom}.
\begin{proposition} \label{prop:existence_uniqueness_regularity_eta_i}
Suppose assumptions (\ass{A1})-(\ass{A5}) as well as the smallness assumption \eqref{eq:smallness_assumption} are satisfied. Then, for $i = 1,...,n$ and all $(t,x) \in \widebar{\Om}_T$ there exists a unique solution $\eta_i \in \mathcal{C}^1([0,T]; \C^{1,\sigma}(\widebar{\Om}; W_\text{per} (\omega))$ to problem \eqref{eq:variational_dcps}. 
\end{proposition}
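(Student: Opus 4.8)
The plan is to treat \eqref{eq:variational_dcps} as a parametrized family of coercive linear elliptic problems on the fixed cell $Y^s$, with the point $(t,x) \in \widebar{\Om}_T$ entering only through the coefficient matrix $\bfD_0\txy$, and then to transfer the regularity of $(t,x) \mapsto \bfD_0\txdy$ to the solution via the implicit dependence of the solution on the coefficient. First I would fix $(t,x)$ and observe that, by Proposition \ref{proposition:properties_of_J0_D0}, $\bfD_0\txdy \in \mathcal{C}^\infty_\text{per}(\widebar{\omega})^{n\times n}$ is symmetric and uniformly elliptic with ellipticity constant $\lambda_{\min} > 0$ (which, inspecting the proof of Proposition \ref{proposition:properties_of_J0_D0}, can be bounded below uniformly in $(t,x)$ since $J_0 \geq$ a positive constant and $\|\bfF_0^{-1}\|$ is uniformly bounded by the smallness assumption). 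Hence the bilinear form $a_{(t,x)}(\eta,\zeta) := \int_{Y^s} \bfD_0\txy \nabla_y \eta \cdot \nabla_y \zeta \,\mathrm{d}y$ is bounded and coercive on $W_\text{per}(\omega)$ — coercivity using the Poincaré–Wirtinger inequality on $Y^s$ for zero-mean periodic functions — and the right-hand side $\zeta \mapsto -\int_{Y^s}\bfD_0\txy \bfe_i \cdot \nabla_y \zeta\,\mathrm{d}y$ is a bounded linear functional on $W_\text{per}(\omega)$. By Lax–Milgram, there is a unique $\eta_i\txdy \in W_\text{per}(\omega)$ solving \eqref{eq:variational_dcps}, with $\|\nabla_y \eta_i\txdy\|_{L^2(Y^s)} \leq C$ uniformly in $(t,x)$.

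Next I would upgrade this to $\mathcal{C}^\infty_\text{per}$-regularity in $y$: since $\bfD_0\txdy$ is smooth and $Y^s$ is smooth (assumption (\ass{A2}) gives $\partial\omega \in \mathcal{C}^\infty$, so the periodic cell problem has smooth data and smooth "boundary" $\Gamma$), interior and boundary elliptic regularity for the Neumann/periodic problem gives $\eta_i\txdy \in \mathcal{C}^\infty_\text{per}(\widebar{\omega})$ for each $(t,x)$. Then comes the core of the statement: the joint regularity in $(t,x)$. The clean way is to differentiate the variational identity with respect to the parameters. Writing $a_{(t,x)}$ for the bilinear form and $\ell_{(t,x)}^i$ for the right-hand side functional, both depend on $(t,x)$ only through $\bfD_0$, which lies in $\mathcal{C}^1([0,T];\mathcal{C}^{1,\sigma}(\widebar{\Om};\mathcal{C}^\infty_\text{per}(\widebar{\omega})^{n\times n}))$. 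For a difference quotient in $x_k$ (resp. a time increment), subtract the variational problems at two parameter values and divide; the difference $\delta\eta_i$ satisfies $a_{(t,x)}(\delta\eta_i,\zeta) = $ (difference-quotient of $\ell^i$) $-$ (difference-quotient of $a$ applied to $\eta_i$ at the other point), and the uniform coercivity plus the uniform $L^2$-bound on $\nabla_y\eta_i$ yields an $L^2(Y^s)$-bound on $\nabla_y\delta\eta_i$ in terms of the $\mathcal{C}^{1}$-in-$(t,x)$, $L^\infty$-in-$y$ modulus of continuity of $\bfD_0$. Passing to the limit identifies the derivatives $\partial_{x_k}\eta_i$, $\partial_t\eta_i$ as the solutions of the "differentiated cell problems" (same bilinear form $a_{(t,x)}$, right-hand side involving $\partial_{x_k}\bfD_0$ or $\partial_t\bfD_0$ acting on $\bfe_i + \nabla_y\eta_i$), and an induction over the order of $x$-derivatives up to the Hölder level $\sigma$ together with the one time-derivative gives $\eta_i \in \mathcal{C}^1([0,T];\mathcal{C}^{1,\sigma}(\widebar{\Om};W_\text{per}(\omega)))$. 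The Hölder-continuity in $x$ at the top order is obtained by the same difference-argument applied to $\nabla_x\eta_i$, estimating the $W_\text{per}$-norm of the increment by the $\mathcal{C}^{0,\sigma}$-seminorm of $\nabla_x\bfD_0$.

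The main obstacle I anticipate is not any single estimate but the bookkeeping of the parameter-differentiation: one must verify that the map $(t,x)\mapsto (a_{(t,x)},\ell^i_{(t,x)})$ is $\mathcal{C}^1$ (in $t$) and $\mathcal{C}^{1,\sigma}$ (in $x$) as a map into $\mathcal{L}(W_\text{per}\times W_\text{per};\R)\times W_\text{per}'$, and that the solution operator $a_{(t,x)}^{-1}$, although itself $(t,x)$-dependent, varies continuously with the same modulus — essentially that inverting a uniformly-coercive, parameter-regular family of bilinear forms preserves the regularity class. This is a standard implicit-function-theorem-type argument, but it has to be run componentwise in $y$-regularity (first in $W_\text{per}$, then bootstrapped to $\mathcal{C}^\infty_\text{per}$ using that the $y$-elliptic regularity constants are uniform in the parameters), and care is needed that the Poincaré constant and the ellipticity constant $\lambda_{\min}$ do not degenerate — both of which are guaranteed by the smallness assumption \eqref{eq:smallness_assumption} through Proposition \ref{proposition:properties_of_J0_D0}. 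Everything downstream (boundedness, coercivity, Lax–Milgram) is routine.
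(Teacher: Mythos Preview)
Your proposal is correct and follows essentially the same route as the paper: Lax--Milgram at each fixed $(t,x)$ using the uniform ellipticity of $\bfD_0$ from Proposition~\ref{proposition:properties_of_J0_D0}, followed by difference-quotient arguments in the parameters $t$ and $x$ that exploit the stability estimate and the regularity of $\bfD_0$ to identify $\partial_t\eta_i$, $\partial_{x_k}\eta_i$ as solutions of the differentiated cell problems with the same bilinear form. The paper writes this out concretely by introducing three auxiliary variational problems (at $t$, at $t+h$, and the formally differentiated one) rather than phrasing it as an implicit-function-theorem argument, but the content is identical. One minor remark: your step upgrading $\eta_i\txdy$ to $\mathcal{C}^\infty_\text{per}(\widebar{\omega})$ via elliptic regularity in $y$ is superfluous here, since the target space in the statement is only $W_\text{per}(\omega)$ in $y$ and the paper does not invoke it.
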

\begin{proof}
The existence and uniqueness of a solution $\eta_i(t,x,\cdot_y) \in W_\text{per}(\omega)$ for all $i=1,...,n$ and $(t,x) \in \widebar{\Om}_T$ follows from the \textit{Lax}-\textit{Milgram} lemma. Moreover, we have 
\begin{equation}\label{eq:dcp_stability_estimate}
\adjnorm{\eta_i\txdy}_{H^1(Y^s)} \leq C \adjnorm{\bfD_0\txdy \bfe_i}_{L^2(Y^s)^n} \quad \forall (t,x) \in \widebar{\Om}_T
\end{equation} 
with $C>0$ independent of $t$ and $x$. We show here only the regularity of $\eta_j$ with respect to the time variable $t$. The regularity with respect to $x$ can be obtained in a similar way. Let us consider the following variational problems.
\begin{itemize}
\item Find $\eta_i\txdy \in W_\text{per}(\omega)$ such that
\begin{equation} \label{eq:pb_for_eta}
\int_{Y^s} \bfD_0(t,x,y) \nabla_y\eta_i(t,x,y) \cdot \nabla_y \zeta(y) \textrm{d}y = \int_{Y^s} \bfD_0(t,x,y) \bfe_i \cdot \nabla_y \zeta(y) \textrm{d}y 
\end{equation}
for all $\zeta \in W_\text{per}(\omega)$, $i=1,...,n$ and $(t,x) \in [0,T] \times \widebar{\Om}$.
\item Find $\eta_i(t+h,x,\cdot_y) \in W_\text{per}(\omega)$ such that
\begin{equation}\label{eq:pb_for_eta+h}
\int_{Y^s} \bfD_0(t+h,x,y) \nabla_y\eta_i(t+h,x,y) \cdot \nabla_y \zeta(y) \textrm{d}y = \int_{Y^s} \bfD_0(t+h,x,y) \bfe_i \cdot \nabla_y \zeta(y) \textrm{d}y 
\end{equation}
for all $\zeta \in W_\text{per}(\omega)$, $i=1,...,n$, $(t,x) \in (0,T) \times \widebar{\Om}$ and $h>0$ sufficiently small.
\item Given $\eta_i\txdy \in W_\text{per}(\omega)$, find $\theta_i\txdy \in W_\text{per}(\omega)$ such that
\begin{equation}\label{eq:pb_for_theta}
\begin{aligned}
\int_{Y^s} (\partial_t \bfD_0(t,x,y) \nabla_y \eta_i(t,x,y) +  \bfD_0(t,x,y)& \nabla_y \theta_i(t,x,y)) \cdot \nabla_y \zeta(y) \textrm{d}y \\
&= \int_{Y^s} \partial_t \bfD_0(t,x,y) \bfe_i \cdot \nabla_y \zeta(y) \textrm{d}y 
\end{aligned}
\end{equation}
for all $\zeta \in W_\text{per}(\omega)$, $i=1,...,n$ and $(t,x) \in [0,T] \times \widebar{\Om}$.
\end{itemize}
To establish the continuity of $\eta_i$ with respect to time, let us subtract \eqref{eq:pb_for_eta} from \eqref{eq:pb_for_eta+h} and add suitable terms. Then, the resulting problem reads: 
\begin{equation*}
\begin{aligned}
\int_{Y^s} \bfD_0(t,x,y)& \left( \nabla_y \eta_i(t+h,x,y) - \nabla_y \eta_i(t,x,y) \right) \cdot \nabla_y \zeta(y) \textrm{d}y\\
&=\int_{Y^s} \left( \bfD_0(t+h,x,y) - \bfD_0(t,x,y)\right)\left(\nabla_y \eta_i(t+h,x,y) + \bfe_i\right)\cdot \nabla_y \zeta(y) \textrm{d}y
\end{aligned}
\end{equation*}
for all $\zeta \in W_\text{per}(\omega)$, $i=1,...,n$ and $(t,x) \in (0,T) \times \widebar{\Om}$. By using an estimate similar to \eqref{eq:dcp_stability_estimate} for \eqref{eq:pb_for_eta+h} we have for all $x \in \widebar{\Om}$
\begin{align*}
\big \lVert \eta_i(t+h,x,\cdot_y) &-  \eta_i(t,x,\cdot_y) \big \rVert_{H^1(Y^s)} \\
&\leq C \adjnorm{( \bfD_0(t+h,x,\cdot_y) - \bfD_0(t,x,\cdot_y)) (\nabla_y \eta_i(t+h,x,\cdot_y) + \bfe_i )}_{L^2(Y^s)^n}\\
& \leq C \adjnorm{\norm{\bfD_0(t+h,x,\cdot_y) - \bfD_0(t,x,\cdot_y)}_2 |\nabla_y \eta_i(t+h,x,\cdot_y) + \bfe_i |}_{L^2(Y^s)} \\
& \leq C \sup_{y\in \widebar{Y^s}} \adjnorm{\bfD_0(t+h,x,\cdot_y) - \bfD_0(t,x,\cdot_y)}_2 \adjnorm{\nabla_y \eta_i(t+h,x,\cdot_y) + \bfe_i}_{L^2(Y^s)^n} \\
& \xrightarrow{h \rightarrow 0}  0,
\end{align*}
due to the continuity of $\bfD_0$ with respect to time and \eqref{eq:dcp_stability_estimate}.\par 
For the differentiability of $\eta_i$ with respect to $t$, let us subtract  \eqref{eq:pb_for_eta} from \eqref{eq:pb_for_eta+h}, divide by $h>0$, subtract \eqref{eq:pb_for_theta} and add suitable terms. We obtain
\begin{align*}
\int_{Y^s} &\bfD_0(t,x,y) \nabla_y \left(\frac{\eta_i(t+h,x,y) - \eta_i\txdy}{h} - \theta_i(t,x,y) \right)  \cdot \nabla_y \zeta(y) \textrm{d}y\\
&= \int_{Y^s} \left( \frac{\bfD_0(t+h,x,y) - \bfD_0(t,x,y)}{h} - \partial_t \bfD_0(t,x,y) \right) \left(\bfe_i - \nabla_y \eta_i(t+h,x,y) \right) \cdot \nabla_y\zeta(y)\textrm{d}y \\
& \qquad - \int_{Y^s} \partial_t \bfD_0(t,x,y) \nabla_y\left(\eta_i(t+h,x,y) -  \eta_i(t,x,y) \right) \cdot \nabla_y\zeta(y)\textrm{d}y
\end{align*}
for all $\zeta \in W_\text{per}(\omega)$, $i=1,...,n$ and $(t,x) \in (0,T) \times \widebar{\Om}$. Then, by virtue of \eqref{eq:dcp_stability_estimate}, we obtain
\begin{align*}
\Big \lVert \frac{\eta_i(t+h,x,\cdot_y) - \eta_i\txdy}{h} - \theta_i(t,x,\cdot_y) \Big \rVert_{H^1(Y^s)} \xrightarrow{h \rightarrow 0}  0,
\end{align*}
by similar estimates as above, using the regularity of $\bfD_0$ and $\eta_i$ as well as \eqref{eq:dcp_stability_estimate}. Consequently, we infer $\partial_t \eta_i = \theta_i$. The continuity of the derivative $\theta_i$ follows with similar steps as before from the continuity of $\partial_t \bfD_0$. In particular, we can extend $\eta_i$ and $\theta_i$ continuously to the boundary of the time interval. \par 
The regularity of $\eta_i$ with respect to $x$ can be shown analogously. Altogether this yields the claimed regularity $\eta_i \in \mathcal{C}^1([0,T]; \C^{1,\sigma}(\widebar{\Om}; W_\text{per} (\omega))$ for all $i=1,...,n$.
\end{proof}
Let us now investigate the properties of the effective coefficients $\Jhom$ and $\Dhom$ defined in \eqref{eq:Jhom}-\eqref{eq:Dhom} by means of $J_0$, $\bfD_0$ and the cell solutions $\eta_i$, $i=1,...,n$.
\begin{proposition}\label{proposition:properties_of_Jhom_Dhom} Suppose assumptions (\ass{A1})-(\ass{A5}) as well as the smallness assumption \eqref{eq:smallness_assumption} are satisfied. Then we have 
\begin{enumerate}
\item $\Jhom \in \mathcal{C}^1([0,T]; \C^{1,\sigma}(\widebar{\Om}))$ with $0 < \gamma^* \leq \Jhom(t,x) \leq \Gamma^*$, $\gamma^*,\Gamma^* \in \R_+$, for all $\tx \in \widebar{\Om}_T$, 
\item $\Dhom \in \mathcal{C}^1([0,T]; \C^{1,\sigma}(\widebar{\Om})^{n\times n})$ is symmetric and elliptic.
\end{enumerate}
\end{proposition}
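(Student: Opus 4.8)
The plan is to read off all four assertions from the properties of the pointwise coefficients $J_0,\bfD_0$ (Proposition~\ref{proposition:properties_of_J0_D0}) and of the diffusion cell solutions $\eta_i$ (Proposition~\ref{prop:existence_uniqueness_regularity_eta_i}), using that $\Jhom$ and $\Dhom$ are obtained from these objects by $y$-integration over the fixed bounded cell $Y^s$ together with pointwise algebraic operations, both of which preserve regularity in the slow variables $(t,x)$.

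For (i) I would first observe that, by Proposition~\ref{proposition:properties_of_J0_D0}, the map $(t,x)\mapsto J_0(t,x,\cdot)$ lies in $\mathcal{C}^1([0,T];\C^{1,\sigma}(\widebar{\Om};\C^0(\widebar{Y^s})))$, and that $g\mapsto\int_{Y^s}g\,\textrm{d}y$ is a bounded linear functional on $\C^0(\widebar{Y^s})$ since $|Y^s|<\infty$; composing the two yields $\Jhom\in\mathcal{C}^1([0,T];\C^{1,\sigma}(\widebar{\Om}))$, with $\partial_t$ and $\nabla_x$ commuting with $\int_{Y^s}$. The two-sided bound then follows from $0<J_0\le\Gamma$: integration gives $\Jhom\le\Gamma|Y^s|=:\Gamma^*$, while for the lower bound $J_0$ is continuous and strictly positive on the compact set $[0,T]\times\widebar{\Om}\times\widebar{Y^s}$, hence bounded below by some $\gamma_0>0$ there (a quantitative value $\gamma_0=(1-M^*)^n$ with $M^*:=\sup_{t\in[0,T]}\norm{\bfF_0(t,\cdot,\cdot)-\textbf{E}_n}_2<1$ is also available from the estimate in the proof of Proposition~\ref{proposition:properties_of_J0_D0} via $\norm{\bfF_0^{-1}}_2\le(1-M^*)^{-1}$), so that $\Jhom\ge\gamma_0|Y^s|=:\gamma^*>0$.

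For (ii) the regularity is proved in the same spirit, writing $D^*_{ij}(t,x)=\Phi\big(\bfD_0(t,x,\cdot),\nabla_y\eta_i(t,x,\cdot),\nabla_y\eta_j(t,x,\cdot)\big)$ with $\Phi(\textbf{D},\textbf{p},\textbf{q}):=\int_{Y^s}\textbf{D}(\bfe_j+\textbf{q})\cdot(\bfe_i+\textbf{p})\,\textrm{d}y$, which is a polynomial (hence $\mathcal{C}^\infty$) map and is continuous from $\C^0(\widebar{Y^s})^{n\times n}\times L^2(Y^s)^n\times L^2(Y^s)^n$ to $\R$ by Cauchy--Schwarz; since its three arguments depend on $(t,x)$ in $\mathcal{C}^1$-in-$t$ and $\C^{1,\sigma}$-in-$x$ fashion by Propositions~\ref{proposition:properties_of_J0_D0} and \ref{prop:existence_uniqueness_regularity_eta_i} (for $\eta_i$ composed with the bounded linear map $v\mapsto\nabla_y v$ into $L^2(Y^s)^n$), the composition gives $\Dhom\in\mathcal{C}^1([0,T];\C^{1,\sigma}(\widebar{\Om})^{n\times n})$. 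Symmetry $D^*_{ij}=D^*_{ji}$ is immediate from the symmetry of $\bfD_0$, which gives $\bfD_0\textbf{a}\cdot\textbf{b}=\bfD_0\textbf{b}\cdot\textbf{a}$. For ellipticity I would expand, for fixed $(t,x)$ and $\bfxi\in\R^n$, $\Dhom\bfxi\cdot\bfxi=\int_{Y^s}\bfD_0(\bfxi+\nabla_y v)\cdot(\bfxi+\nabla_y v)\,\textrm{d}y$ with $v:=\sum_{j=1}^n\xi_j\eta_j(t,x,\cdot)\in W_\text{per}(\omega)$; by ellipticity of $\bfD_0$ this is $\ge 0$, and it vanishes only if $\nabla_y v\equiv-\bfxi$, i.e. $v$ is affine, which by $Y^s$-periodicity of $v$ forces $\bfxi=0$. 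Hence $\Dhom(t,x)$ is symmetric positive definite at every $(t,x)\in\widebar{\Om}_T$; since $(t,x)\mapsto\Dhom(t,x)$ is continuous on the compact set $\widebar{\Om}_T$ by the regularity just proved, its smallest eigenvalue is a continuous strictly positive function there and is thus bounded below by some $c^*>0$, yielding $\Dhom\bfxi\cdot\bfxi\ge c^*|\bfxi|^2$ uniformly on $\widebar{\Om}_T$.

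I expect the only point requiring genuine care to be the uniformity in $(t,x)$ of the ellipticity constant of $\Dhom$: the standard homogenization argument gives positive definiteness for each fixed $(t,x)$, and one upgrades this to a uniform bound by a compactness argument, which in turn relies on $\delta^*:=\inf_{\widebar{\Om}_T\times\widebar{Y^s}}\lambda_{\min}(\bfD_0)>0$ — once more continuity on a compact set. The regularity transfer itself is routine once the correct Banach-space-valued viewpoint is fixed; the single subtlety is that $\eta_i(t,x,\cdot)$ is only $H^1$ in $y$, so $\nabla_y\eta_i$ must be carried throughout as an $L^2(Y^s)^n$-valued quantity, which is precisely why $\Phi$ is set up on $\C^0(\widebar{Y^s})^{n\times n}\times L^2(Y^s)^n\times L^2(Y^s)^n$.
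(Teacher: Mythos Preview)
Your proposal is correct and follows essentially the same approach as the paper: regularity of $\Jhom,\Dhom$ is inherited from $J_0,\bfD_0,\eta_i$ via $y$-integration, the bounds on $\Jhom$ come from those on $J_0$ together with compactness, and ellipticity of $\Dhom$ is shown by the standard auxiliary-function argument (your $v=\sum_j\xi_j\eta_j$ is exactly $w-\bfxi\cdot y$ in the paper's notation), with the non-periodicity contradiction. You are in fact slightly more explicit than the paper in two places---the Banach-space formulation of the regularity transfer and the compactness argument upgrading pointwise positive definiteness of $\Dhom$ to a uniform ellipticity constant $c^*$---but these are elaborations of the same proof, not a different route.
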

\begin{proof}
\begin{enumerate}
\item The $\mathcal{C}^1$-$\C^{1,\sigma}$-regularity follows from the regularity of $J_0$. Due to the continuity and positivity of $J_0$, there exists $\gamma^* > 0$ such that
$$
\Jhom(t,x) \geq \gamma^* \quad \forall \tx \in \widebar{\Om}_T.
$$
On the other hand 
$$
\sup_{(t,x) \in \widebar{\Om}_T} \Jhom\tx = \sup_{(t,x) \in \widebar{\Om}_T} \int_{Y^s} J_0\txy \textrm{d}y \leq \Gamma |Y^s| := \Gamma^*
$$
with $\Gamma > 0$ from Proposition \ref{proposition:properties_of_J0_D0}.
\item $\mathcal{C}^1$-$\C^{1,\sigma}$-regularity and symmetry follow from the regularity and symmetry of $\bfD_0$ and the regularity of $\eta_i$, $i=1,...,n$. To prove the ellipticity, we follow \cite[Proposition~6.12]{cioranescu1999}. Let $\bfxi \in \R^n$ and define $w(t,x,y):= \sum_{i=1}^n \xi_i(y_i+\eta_i(t,x,y))$. We compute
\begin{align*}
\Dhom(t,x)  \bfxi \cdot \bfxi 
& = \sum_{i,j,k,l = 1}^n \int_{Y^s} D_{0,kl}(t,x,y) \xi_j\partial_l(y_j+\eta_j(t,x,y) \xi_i \partial_k(y_i+\eta_i(t,x,y)) \textrm{d}y\\
& \geq \lambda_{\text{min}}\norm{\nabla_y w(t,x,\cdot_y)}^2_{L^2(Y^s)^n} \geq 0
\end{align*}
for all $(t,x) \in \widebar{\Om}_T$ using the ellipticity of $\bfD_0$ shown in Proposition \ref{proposition:properties_of_J0_D0}. It remains to show that equality in the last step is achieved only for $\bfxi \equiv 0$. To this end, assume equality to hold for $\bfxi \neq 0$. Then, $w$ is independent of $y$ and thus there exists $c = c(t,x)$ such that
$$
c(t,x) = w(t,x,y) = \sum_{i=1}^n \xi_i(y_i+\eta_i(t,x,y))
$$
which is equivalent to 
$$
\sum_{i=1}^n  \xi_i \eta_i(t,x,y) = c(t,x) - \sum_{i=1}^3 \xi_i y_i.
$$
This is a contradiction since the right-hand side is not periodic with respect to $y$ for $\bfxi \neq 0$. Thus we have shown that $\Dhom(t,x)$ is symmetric positive definite for all $(t,x) \in \widebar{\Omega}_T$ and therefore
$$
\Dhom(t,x) \bfxi \cdot \bfxi \geq \lambda^*_\text{min}(t,x) |\bfxi|^2
$$
for all $\bfxi \in \R^n$ and all $(t,x) \in \widebar{\Omega}_T$ with $\lambda^*_\text{min}(t,x) > 0$ the smallest eigenvalue of $\bfD^*(t,x)$.
\end{enumerate}
\end{proof}
Based on the properties of the homogenized coefficients from Proposition \ref{proposition:properties_of_Jhom_Dhom}, the proof of Theorem \ref{theorem:main_theorem_2} is now standard and follows by application of \textit{Galerkin}'s method and standard regularity results for weak solutions, see e.g. \cite[Chapter 7]{evans2010}. 
\begin{remark}
The global existence for the model investigated within this chapter critically depends on the smallness assumption \eqref{eq:smallness_assumption} for the data of the macroscopic elasticity problem. The smallness assumption is given in terms of the constants $C_\bfu = C_\bfu(\Omega, \sigma,\Ahom)$ and $C_{\bfchi} = C_{\bfchi}(Y^s, \textbf{A})$. Especially $C_{\bfchi}$ depends on the microscopic geometry $Y^s$ and the elastic properties of the medium. Condition \eqref{eq:smallness_assumption} shows that small values of $C_{\bfchi}$ allow for more flexibility in the choice of the data $\bfF_e$ and $\bfu_D$.  In the following, we study the sensitivity of $C_{\bfchi}$ with respect to $Y^s$ and $\textbf{A}$ by means of numerical simulations and identify critical settings which lead to small values for $C_{\bfchi}$.
\end{remark}
%
\subsection{Sensitivity of the constant $C_{\bfchi}$ with respect to microscopic geometry and material parameters}
\label{subseq:characterization}
Let us consider  standard periodicity cells $Y^s = Y^s_{(w_1,w_2)} \subset [0,1]^2$ having the shape of a cross, defined by two cross bars with widths $w_1,w_2 \in (0,1)$, given by
\begin{equation}\label{eq:Ys-def}
Y^s_{(w_1,w_2)} = \left[ \left( \frac{1-w_1}{2}, \frac{1+w_1}{2} \right) \times (0,1) \right] \cup \left[ \left( (0,1) \times \frac{1-w_2}{2}, \frac{1+w_2}{2} \right) \right]
\end{equation}
and illustrated in Figure \ref{fig:unit_cell}.
\begin{figure}
\centering
\includegraphics[width=0.5\textwidth]{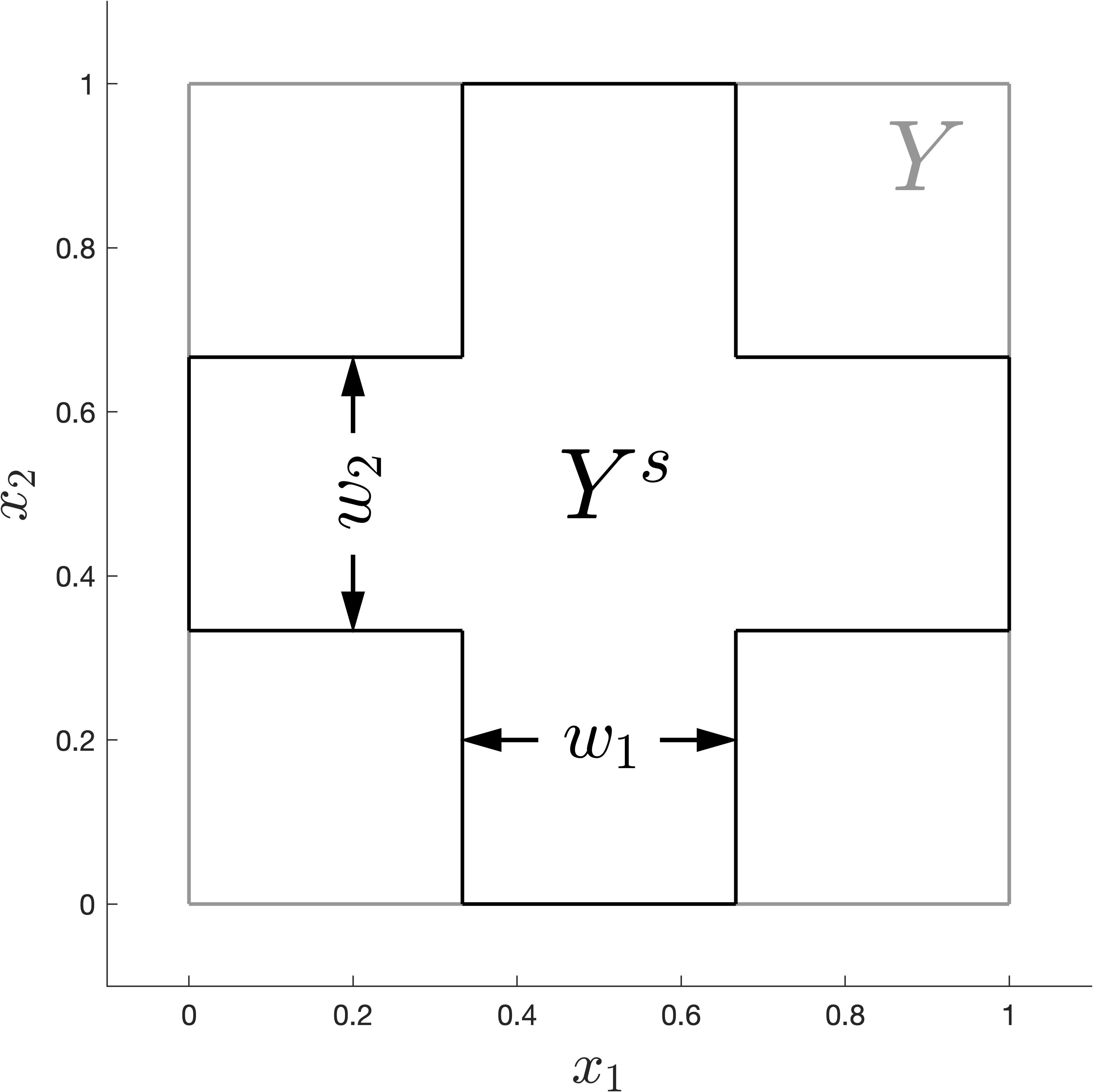}
\caption{The unit cell $Y^s$ given by \eqref{eq:Ys-def} with $w_1 = w_2 = \frac{1}{3}$.}
\label{fig:unit_cell}
\end{figure}
Furthermore, let us consider an isotropic and homogeneous medium described by the constant elasticity tensor $\textbf{A}$ with components %
\begin{equation}\label{eq:lame_representation}
A_{ijkl} = \lambda\delta_{ij}\delta_{kl} + \mu \left( \delta_{ik} \delta_{jl} + \delta_{il}\delta_{jk} \right),
\end{equation}
with \textit{Lamé} constants $\mu,\lambda>0$, for $i,j,k,l = 1,2$.

To investigate the sensitivity of the constant 
\begin{equation}\label{eq:def_C_chi_sens}
C_{\bfchi} = C_{\bfchi}(Y^s, \textbf{A}) = \sup_{y \in \widebar{Y^s}, i,j=1,...,n} \adjnorm{\nabla_y \bfchi_{ij}(y)}_\infty
\end{equation}
with respect to the geometry of the solid cell domain $Y^s$, we solved the elasticity cell problems \eqref{eq:elasticity_cell_problems} on a range of different cross-shaped domains, covering all possible crossbar values $w_1,w_2 \in (0,1)$. In all cases, we used the \textit{Lamé} parameters $\lambda = \mu = 1$. Similarly, we investigated the dependence of $C_{\bfchi}$ on $\textbf{A}$ by solving the elasticity cell problems for different values of $\lambda, \mu \in [1,20]$ on the fixed domain $Y^s_{(1/3,1/3)}$. From the resulting cell solutions, we computed $C_{\bfchi}$ according to \eqref{eq:def_C_chi_sens} and generated the plots in Figures \ref{fig:C_chi_dependence:a} and \ref{fig:C_chi_dependence:b}.\par 
For fixed \textit{Lamé} parameters, we  obtain the highest values of $C_{\bfchi}$ when the cross is constituted by one very thick bar and one very thin bar, while $C_{\bfchi}$ attains its smallest values if either both bars are thick or both are thin. In our test, the minimum $C_{\bfchi} = 0.2845$ is attained for $w_1=w_2=0.99$, which were the thickest crossbar widths considered in the simulations. The two thinnest widths considered here, $w_1=w_2=0.01$, result in local minimum for $C_{\bfchi}$, where the value attained lies slightly above the value for $w_1=w_2=0.99$. Concerning the dependence of $C_{\bfchi}$ on $\lambda, \mu$, we observe that the combination of large $\lambda$ and small $\mu$ results in the highest values for $C_{\bfchi}$ while the minimum $C_{\bfchi} = 1.4739$ is attained for $\lambda=1$ and $\mu = 20$.\par 
Combining our findings, we expect that the choice of thick crossbars, which corresponds to small perforations, together with a small value for $\lambda$ and a large one for $\mu$ produces the smallest constant $C_{\bfchi}$ which allows for larger data $\bfF_e$ and $\bfu_D$. In this regard, we obtain $C_{\bfchi} = 0.1953$ for $w_1 = w_2 = 0.99$, $\lambda = 1$ and $\mu = 20$. Increasing $\mu$ even further does however not have a significant effect on $C_{\bfchi}$, i.e. it seems that $C_{\bfchi}$ cannot be decreased further, at least in the present setting.
\begin{figure}
\centering
\begin{subfigure}[t]{0.49\textwidth}
\includegraphics[width = \textwidth]{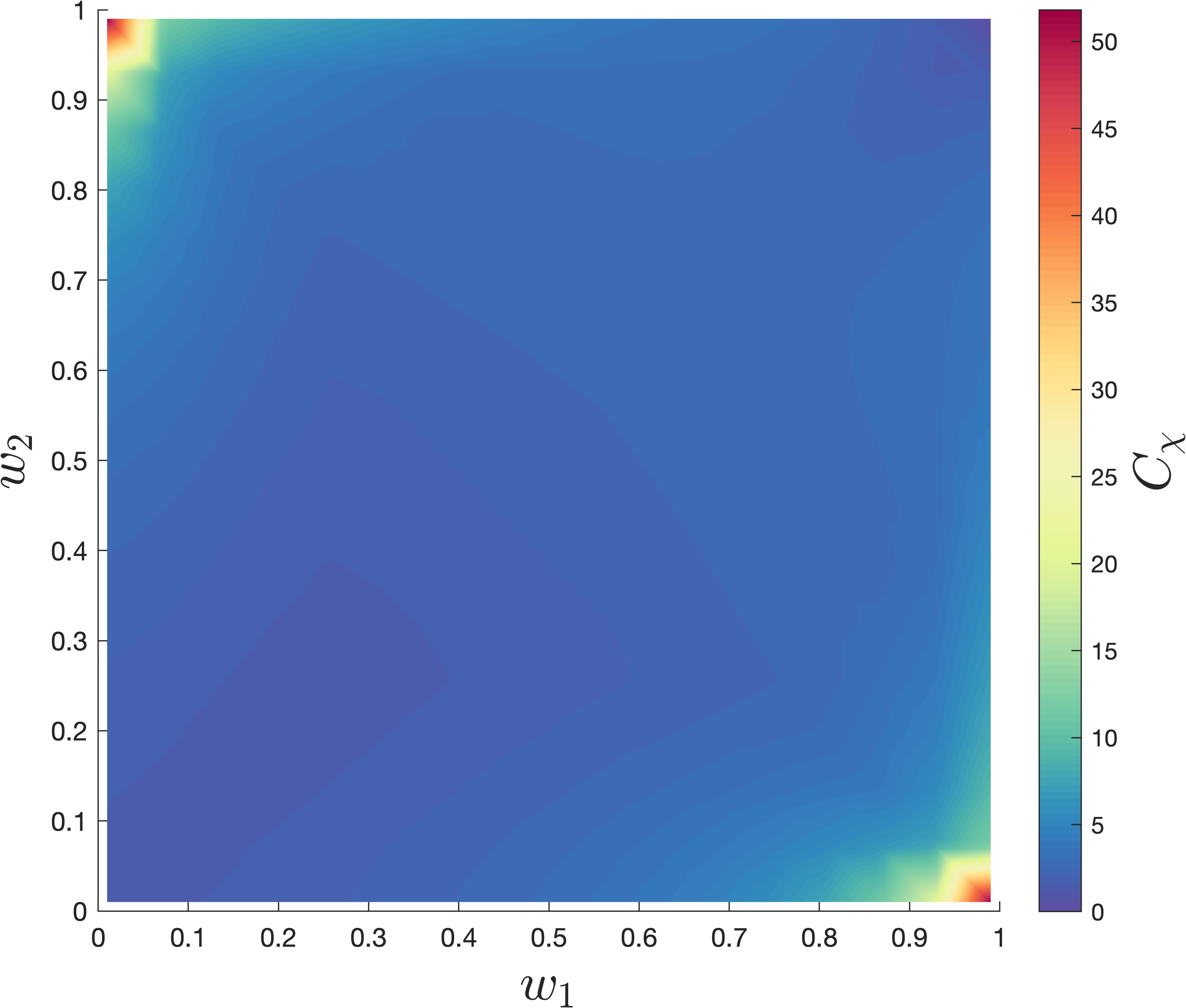}
\caption{$C_{\bfchi}$ in dependence of crossbar widths $w_1$, $w_2 \in (0,1)$ for fixed \textit{Lamé} parameters $\lambda = \mu = 1$.}
\label{fig:C_chi_dependence:a}
\end{subfigure}
\hfill
\begin{subfigure}[t]{0.49\textwidth}
\includegraphics[width = \textwidth]{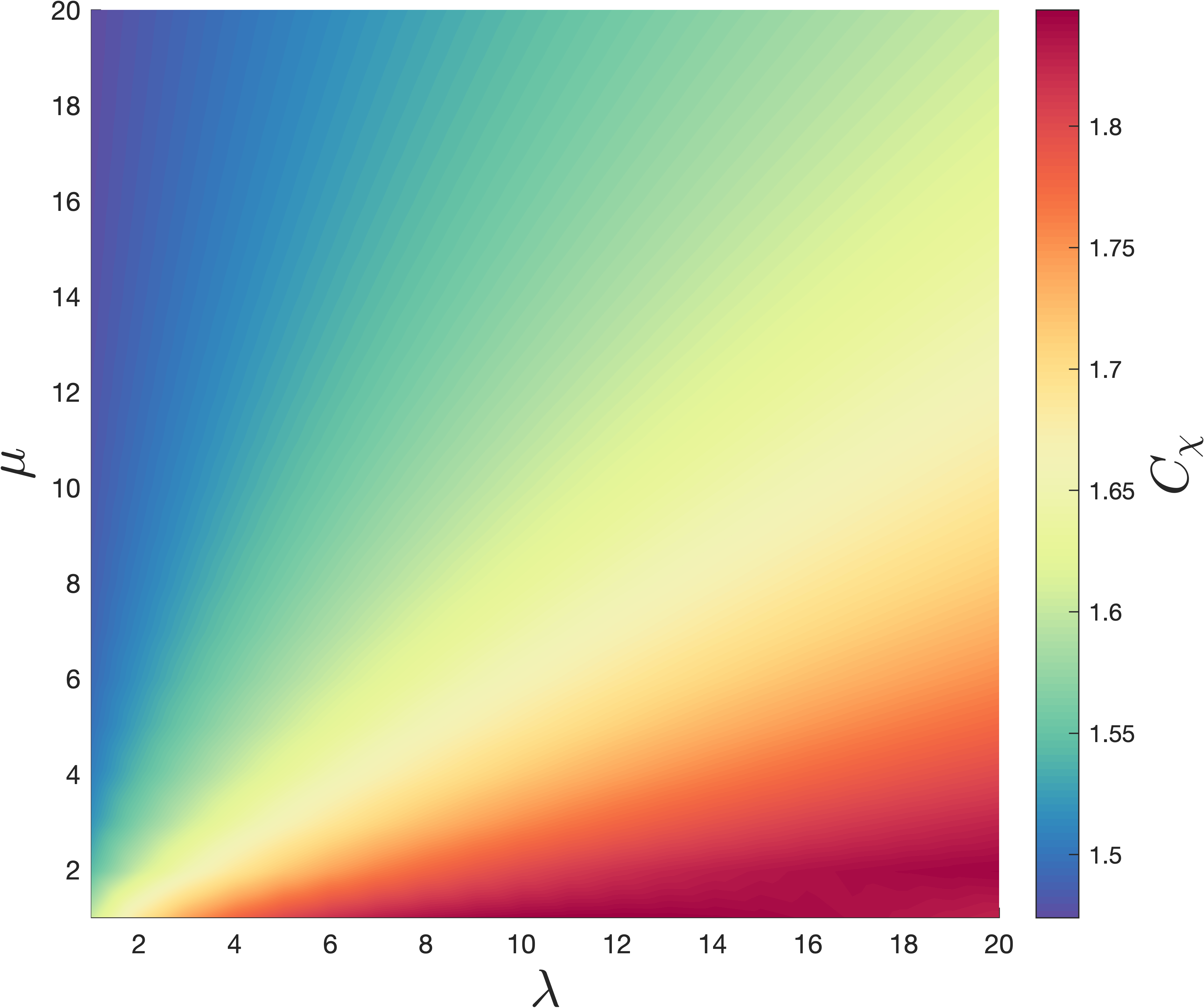}
\caption{$C_{\bfchi}$ in dependence of \textit{Lamé} parameters $\lambda, \mu \in  [1,20]$ for fixed solid domain $Y^s_{(1/3,1/3)}$.}
\label{fig:C_chi_dependence:b}
\end{subfigure}
\caption{Investigation of the dependence of $C_{\bfchi}$ on the micro-geometry and the material parameters.}
\label{fig:C_chi_dependence}
\end{figure}
\section{Numerical justification of the effective micro-macro model}%
\label{sec:numerical_justification}
\begin{algorithm}[t]
\begin{algorithmic}[1]
\State Discretize microscopic domain $Y^s$ and macroscopic domain $\Om$
\State Discretize time interval $[0,T]$ using step size $\Delta t = T/N$, $N \in \mathbb{N}$
\State Solve elasticity cell problems \eqref{eq:elasticity_cell_problems}
\State Compute $\Ahom$ according to \eqref{eq:Ahom}
\State Set $k \leftarrow 0, t_k \leftarrow 0$
\While{$t_k \leq T$}
\State Solve macroscopic elasticity problem \eqref{eq:effective_micro_macro_model:a}-\eqref{eq:effective_micro_macro_model:b} for current displacement
\For{each quadrature point in the macroscopic grid}
\State Compute $J_0$, $D_0$ according to \eqref{def:J_0}, \eqref{def:D_0}
\State Solve diffusion cell problems \eqref{eq:diffusion_cell_problems}
\State Compute $\Jhom, \Dhom$ according to \eqref{eq:Jhom}, \eqref{eq:Dhom}
\EndFor
\State Solve macroscopic transport problem \eqref{eq:effective_micro_macro_model:c}-\eqref{eq:effective_micro_macro_model:e} for current concentration
\State Set $k \leftarrow k+1, t_k \leftarrow k\Delta t$
\EndWhile
\end{algorithmic}
\caption{Numerical scheme to compute the solution to the effective micro-macro problem \eqref{eq:effective_micro_macro_model}-\eqref{def:F_0}.}
\label{alg:1}
\end{algorithm}
In this section, we show numerically the convergence of the microscopic solutions of \eqref{eq:micro_model_on_reference_domain} to the solution of the effective micro-macro model \eqref{eq:effective_micro_macro_model}-\eqref{def:F_0}, for smaller and smaller values of the scale parameter $\eps>0$, for the computational scenario described below. This numerical justification of the formally derived effective model is particularly important, as rigorous analytical convergence proofs or error estimates are not available so far. \par
For the computations we use the computational framework developed in \cite{knoch2023} and based on the open source finite element library \textit{deal.II} \cite{dealii2023,dealii2019}. We consider the case of $n=2$ space dimensions and use the classical \textit{Galerkin} method with \textit{Lagrangian} finite elements of order one for spatial discretization and the \textit{Crank}-\textit{Nicolson} method for temporal discretization. The numerical scheme used for the effective micro-macro problem is given in Algorithm \ref{alg:1}. \par 
Let us introduce the simulation scenario. We use a rectangular reference domain $\Omega$ and for the elasticity problem, we assume clamped upper and lower (outer) boundary and parabola-shaped time-periodic lateral stretching-relaxing cycles. For the transport problem, we have a homogeneous \textit{Dirichlet} boundary condition on the whole (outer) boundary, and assume that the substance enters the domain via a constant source term. The initial concentration is set to zero. More specifically, we use the computational reference domain
$$
\Omega = \left( -0.5, 0.5 \right)^2.
$$
The computational standard cell $Y^s = Y^s_{(w_1,w_2)}$, for $(w_1, w_2) \in (0,1)^2$, is given by the cross-shaped domain defined in \eqref{eq:Ys-def} and illustrated in Figure \ref{fig:unit_cell}.
In this section, we use $w_1 = w_2 = \frac{1}{3}$ for the computations.
The $\eps$-dependent computational microscopic reference domain $\Oes$ is given by
$$
\Oes =  \Om \cap \eps \left( \omega + \begin{pmatrix}\frac{1}{2} \vspace{0.3em} \\ \frac{1}{2} \end{pmatrix} \right)
$$
where $\omega$ is the unbounded periodic domain
$$
\omega = \text{int} \left( \bigcup_{\textbf{k} \in \mathbb{Z}^n} \left( \widebar{Y^s} + \textbf{k} \right) \right).
$$
\begin{figure}
\centering
\begin{subfigure}{0.49\textwidth}
    \centering
    \includegraphics[width=\textwidth]{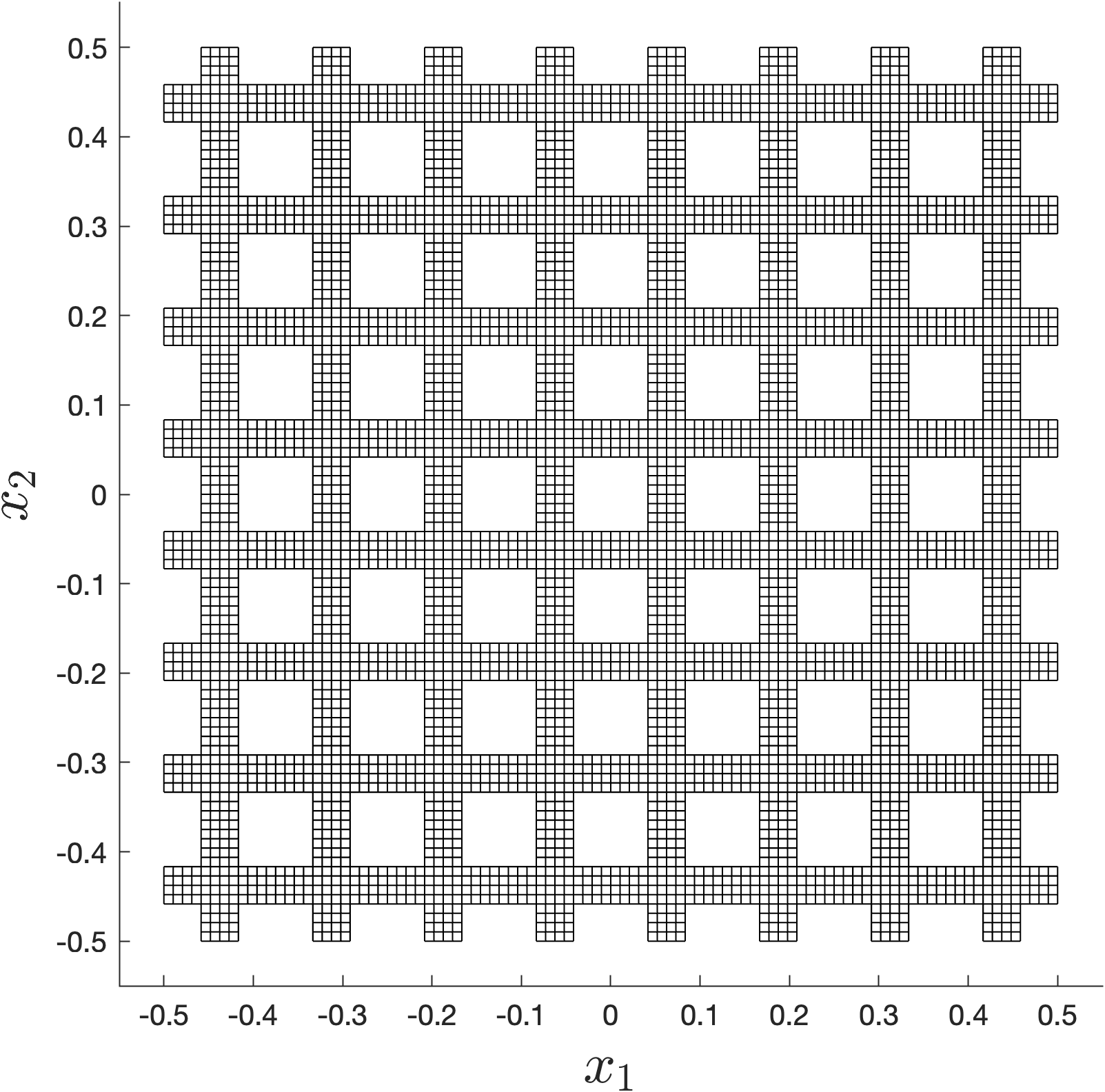}
\end{subfigure}
\hfill
\begin{subfigure}{0.49\textwidth}
    \centering
    \includegraphics[width=\textwidth]{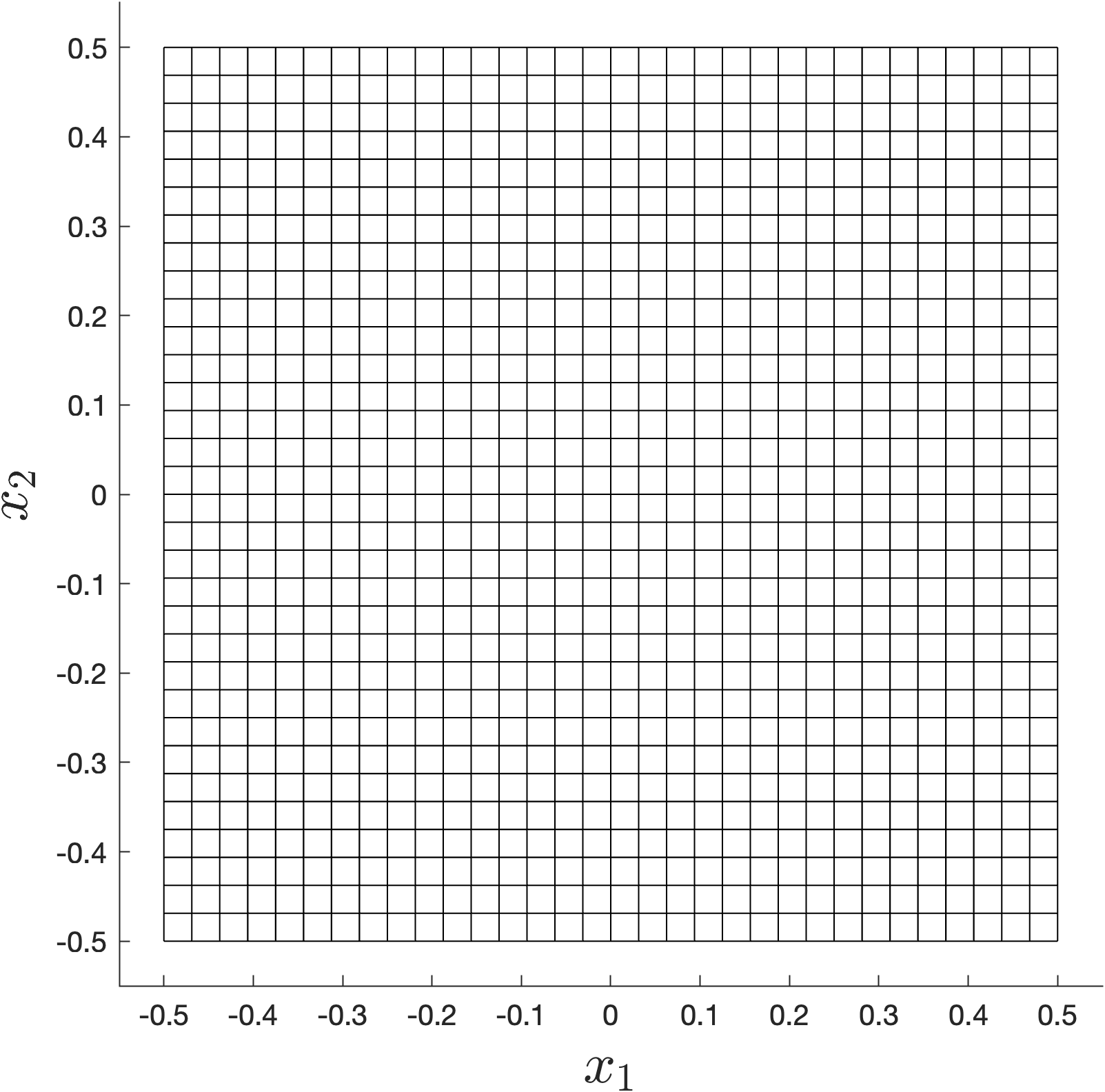}
\end{subfigure}
\caption{Finite element mesh for the perforated domain $\Oes$ (for $\eps = \frac{1}{8}$) and the macroscopic domain $\Om$.}
\label{fig:grids}
\end{figure}%
A graphical representation of the computational domains  $\Oes$ (for $\eps = \frac{1}{8}$) and $\Omega$ together with the corresponding finite element mesh can be found in Figure \ref{fig:grids}.\par 
We consider an isotropic and homogeneous medium described by the constant elasticity tensor $\textbf{A}$ with components given in \eqref{eq:lame_representation}
with \textit{Lamé} constants $\lambda =  \mu = 1$ for $i,j,k,l = 1,2$. We set 
$$
\bff_e(t,x) = (0,0)^T
$$
for all simulations. Moreover, we consider the case of a single species, i.e. $N_c = 1$, with diffusion tensor 
\begin{equation*}
\wh{\bfD} = \begin{pmatrix} 0.5 & 0 \\ 0 & 0.5 \end{pmatrix},
\end{equation*}
and use 
\begin{equation}\label{eq:rhs_and_init_value}
f_d^1  = 1 \quad \text{and} \quad c^{1,0} = 0.
\end{equation}
Let us recall that we always assume homogeneous \textit{Neumann} boundary conditions at the inner part of the boundary for the microscopic problem, see \eqref{eq:micro_model_on_reference_domain:b} and \eqref{eq:micro_model_on_reference_domain:e}. For the elasticity subproblem, homogeneous \textit{Dirichlet} boundary conditions are prescribed at the upper and lower part of the outer boundary and a time-periodic, parabola-shaped displacement at the lateral outer boundaries is assumed:
\begin{gather*}
\bfu_D(t,x) = \begin{cases} \left( \frac{a}{2} (1 - \cos(2\pi ft))(1- 4x_2^2), 0\right)^T, x \in \partial \Om \cap \{x_1 = 0.5\}, \\
\left( -\frac{a}{2} (1 - \cos(2\pi ft))(1- 4x_2^2), 0\right)^T, x \in\partial \Om \cap \{x_1 = -0.5\},\\
(0,0)^T, x \in \partial \Om \cap \{x_2 = \pm0.5\}, \end{cases}
\end{gather*}
with amplitude $a=0.1$ and frequency$f=1$. Note that this function is restricted to the outer boundary of the perforated domain in the case of the microscopic problem. \par \bigskip
We compute the solutions to the effective micro-macro problem \eqref{eq:effective_micro_macro_model}-\eqref{def:F_0} and to the microscopic problem \eqref{eq:micro_model_on_reference_domain} (for $\eps \in \{1,\frac{1}{2},\frac{1}{4},\frac{1}{8},\frac{1}{16},\frac{1}{32}\}$). 
We investigate the convergence of the microscopic displacement and concentration to the macroscopic ones in the $L^2$- and the $H^1$-norm at a fixed time $t_\text{eval}=1.5$. For the $H^1$-norm, we include also the first order corrector terms, which can be computed via \eqref{eq:correctors}. We quantify the numerical order of convergence via the formulas
\begin{equation}\label{eq:EOC_L2}
EOC_i = \log_2\left( \frac{\| \square - \square_{\eps = 2^{-(i-1)}} \|_{L^2(\Oes)} }{\| \square - \square_{\eps = 2^{-i}} \|_{L^2(\Oes)}} \right),
\end{equation}
and
\begin{equation}\label{eq:EOC_H1}
EOC_i = \log_2\left( \frac{\| \square + 2^{-(i-1)}\square_1 - \square_{\eps = 2^{-(i-1)}} \|_{H^1(\Oes)} }{\| \square + 2^{-i}\square_1 - \square_{\eps = 2^{-i}} \|_{H^1(\Oes)}} \right),
\end{equation}
where $i\in \{1, 2, 3,4,5\}$, and $\square \in \{\bfu,c\}(t=t_\text{eval},\cdot)$. For example, the EOC of the concentration in the $H^1$-norm is obtained by computing
$$
EOC_i = \log_2\left( \frac{\| c(1.5,\cdot)  + 2^{-(i-1)}c_1(1.5,\cdot,\frac{\cdot}{\eps})  - c_{\eps = 2^{-(i-1)}}(1.5,\cdot)  \|_{H^1(\Oes)}}{\| c(1.5,\cdot) + 2^{-i}c_1(1.5,\cdot,\frac{\cdot}{\eps}) - c_{\eps = 2^{-i}}(1.5,\cdot)  \|_{H^1(\Oes)}} \right).
$$
\begin{table}
\begin{subtable}[c]{\textwidth}
  \centering
  \begin{tabular}{r | c r | c r}\toprule
$\eps^{-1}$ & $\norm{\bfu_\eps - \bfu}_{L^2(\Oes)^2}$ & EOC & $\norm{\bfu_\eps - (\bfu + \eps \bfu_1)}_{H^1(\Oes)^2}$ & EOC   \\
\midrule
	  1 & 5.37900e-03 &    -   &  1.12561e-01  &    -    \\
      2 & 5.37829e-03 & 0.00 &  9.62039e-02  & 0.23  \\
      4 & 3.17833e-03 & 0.76 &  6.58677e-02 & 0.55   \\
      8 & 1.88574e-03 & 0.75 &  4.76079e-02 & 0.47   \\
     16 & 1.05914e-03 & 0.83 & 3.51168e-02 & 0.44   \\
     32 & 5.65785e-04 & 0.90 & 2.71631e-02  & 0.37 \\
\bottomrule
\end{tabular}
  \subcaption{displacement}
      \label{tab:eps_quasi_static_convergence:a}
  \end{subtable}
\vfill
\begin{subtable}[c]{\textwidth}
\centering
  \begin{tabular}{r|c r | c r } \toprule
  $\eps^{-1}$ & $\norm{c_\eps - c}_{L^2(\Oes)}$ & EOC & $\norm{c_\eps - (c + \eps c_1)}_{H^1(\Oes)}$ & EOC  \\ \midrule
   1 &  1.46398e-02 & -      & 2.02208e-01 &     -     \\
   2 &  1.12154e-02 & 0.38 & 1.37726e-01 & 0.55   \\
   4 & 6.94675e-03 & 0.69 & 7.22432e-02 & 0.93   \\
   8 & 3.38927e-03 & 1.04 & 4.78495e-02 & 0.59   \\
 16 & 1.65630e-03 & 1.03 & 1.48408e-02 & 1.69  \\
 32 & 9.91574e-04 & 0.74 & 1.13611e-02 & 0.39   \\
 \bottomrule
\end{tabular}
  \subcaption{concentration}
    \label{tab:eps_quasi_static_convergence:b}
  \end{subtable}
  \caption{Numerical convergence study for decreasing $\varepsilon$. The solutions are evaluated at $t_\text{eval} = 1.5$ and the estimated orders of convergence (EOC) is computed according to \eqref{eq:EOC_L2} and \eqref{eq:EOC_H1}.}
  \label{tab:eps_quasi_static_convergence}
\end{table}%
\begin{figure}
\centering
\begin{subfigure}[b]{0.49\textwidth}
\centering
\includegraphics[width=\textwidth]{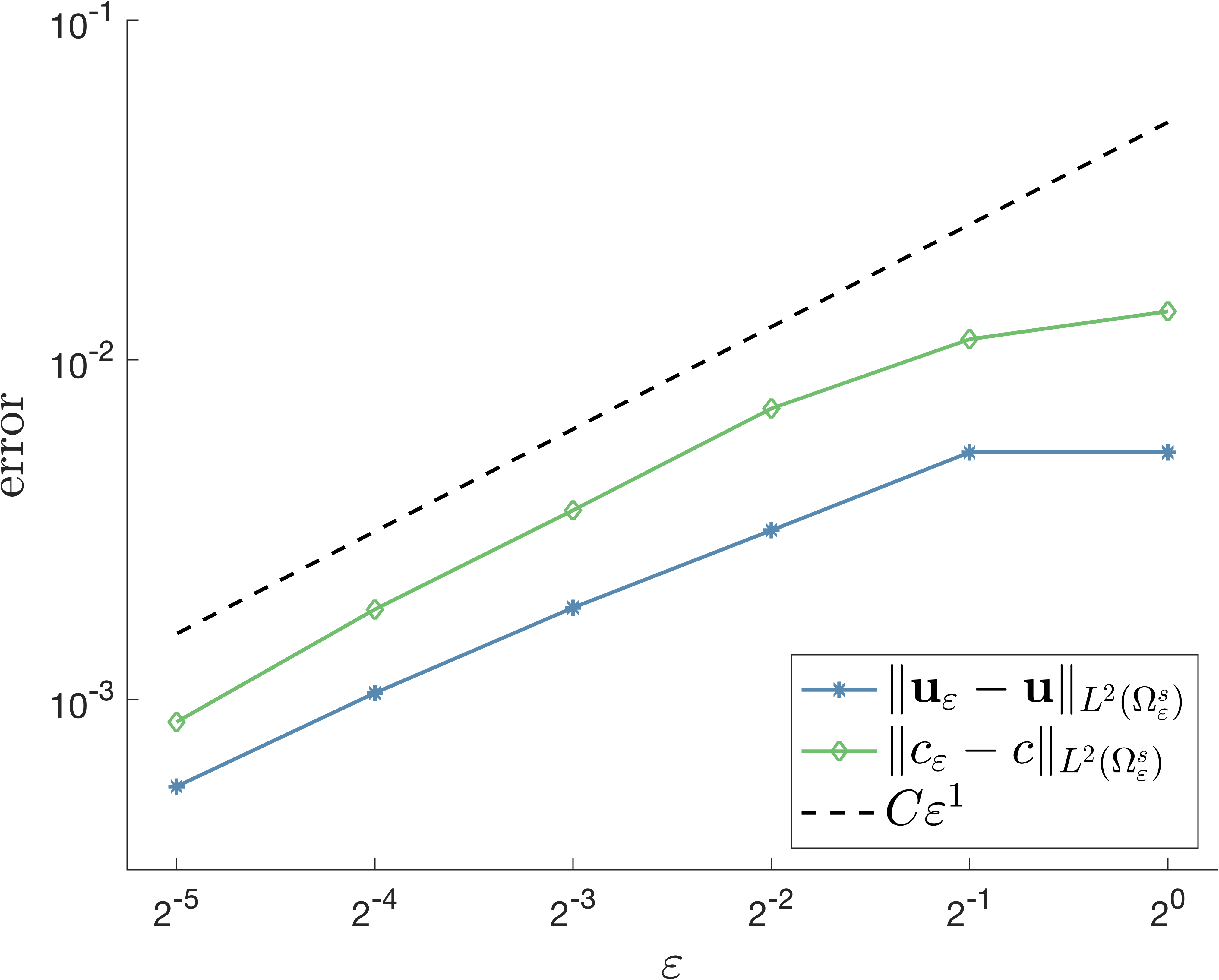}
\end{subfigure}
\hfill
\begin{subfigure}[b]{0.49\textwidth}
\centering
\includegraphics[width=\textwidth]{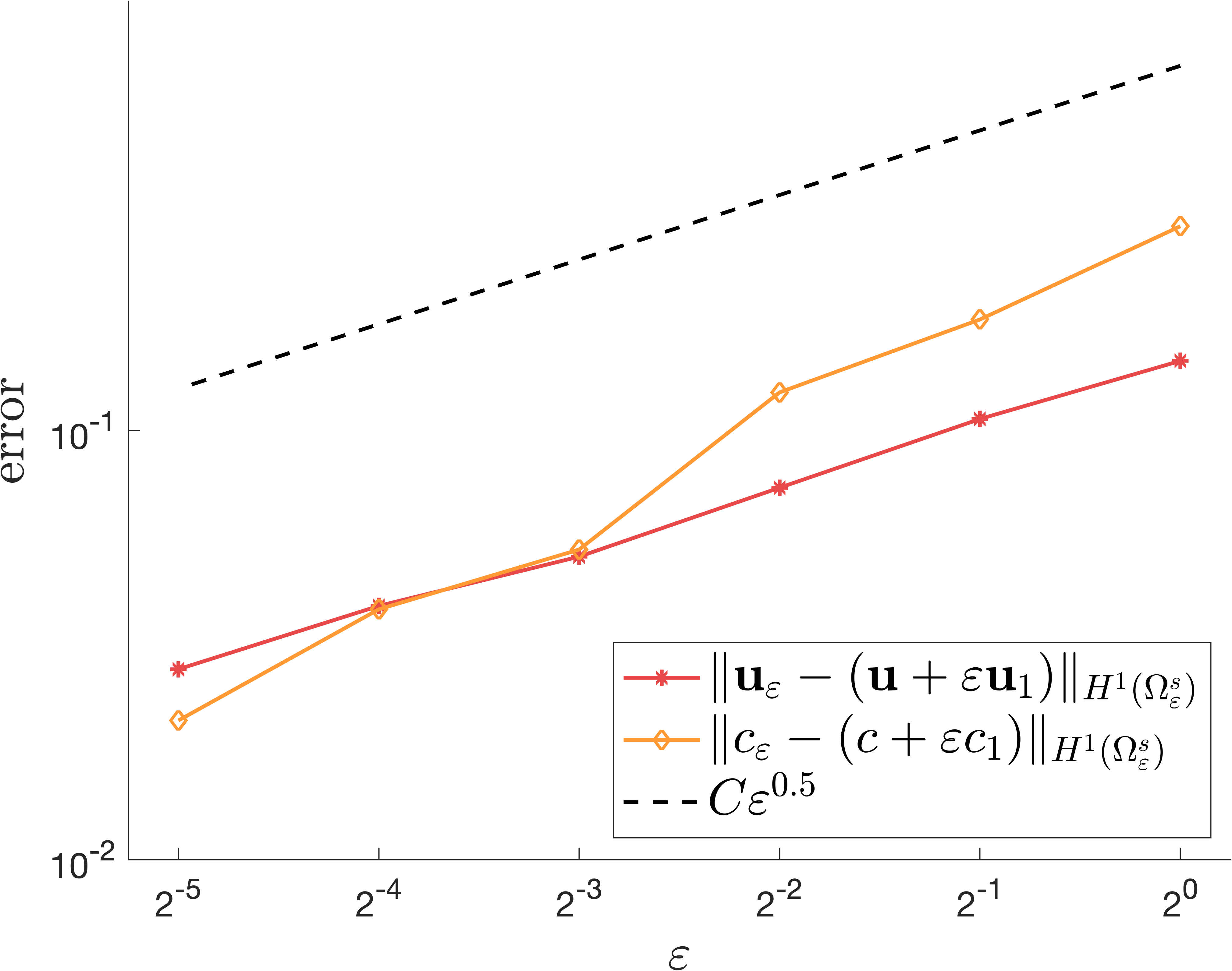}
\end{subfigure}
\caption{Log-log plots generated from the convergence data of Table \ref{tab:eps_quasi_static_convergence} assessing the convergence of the microscopic solutions to the macroscopic ones in the $L^2$-norm (left) and the $H^1$-norm (right).}
\label{fig:log_log_convergence_plots}
\end{figure}%
The so-obtained convergence results are collected in Table \ref{tab:eps_quasi_static_convergence} and visualized using log-log plots in Figure \ref{fig:log_log_convergence_plots}. In all cases, the error becomes smaller for smaller values of $\eps$ and the order of convergence is higher for the $L^2$ norm than for the $H^1$ norm, for both the displacement and the concentration. From the EOC values, we deduce that the error in the $L^2$-norm converges approximately with order $1$ and the error in the $H^1$-norm, when the respective corrector is included, with order $\frac{1}{2}$. We still observe some dispersion in the EOC-values, which might be attributed to the accuracy losses due to spatial and temporal discretization. Let us remark that no convergence can be observed in the $H^1$-norm when the corrector terms \eqref{eq:correctors} are not included (data not shown). \par
Eventually, let us supplement the above investigations for the convergence in the $L^2$- and $H^1$-norm, evaluated at a fixed time point $t_\text{eval} = 1.5$, by time series of the microscopic and effective concentrations. To this end, we compute in each time step the mean value of the concentration which is given by 
$$
\frac{1}{|\Oes|} \int_{\Oes} c_\eps(t,x) \textrm{d}x \quad \text{ and } \quad \frac{1}{|\Om|} \int_{\Om} c(t,x) \textrm{d}x,
$$
respectively. The so-obtained values are plotted over time in Figure \ref{fig:convergence_of_mean}. The mean value of the microscopic concentration is depicted for $\eps \in \{1,\frac{1}{2},\frac{1}{4},\frac{1}{8}\}$. Clearly, the curves of the mean value computed for $c_\eps$ approach the curve of the macroscopic mean concentration. In particular, we observe that it is hard to optically distinguish the curves for $\eps = \frac{1}{4},\frac{1}{8}$ from the orange curve of the macroscopic mean concentration, indicating fast convergence. \par \bigskip
\begin{figure}
\centering
\includegraphics[width=0.75\textwidth]{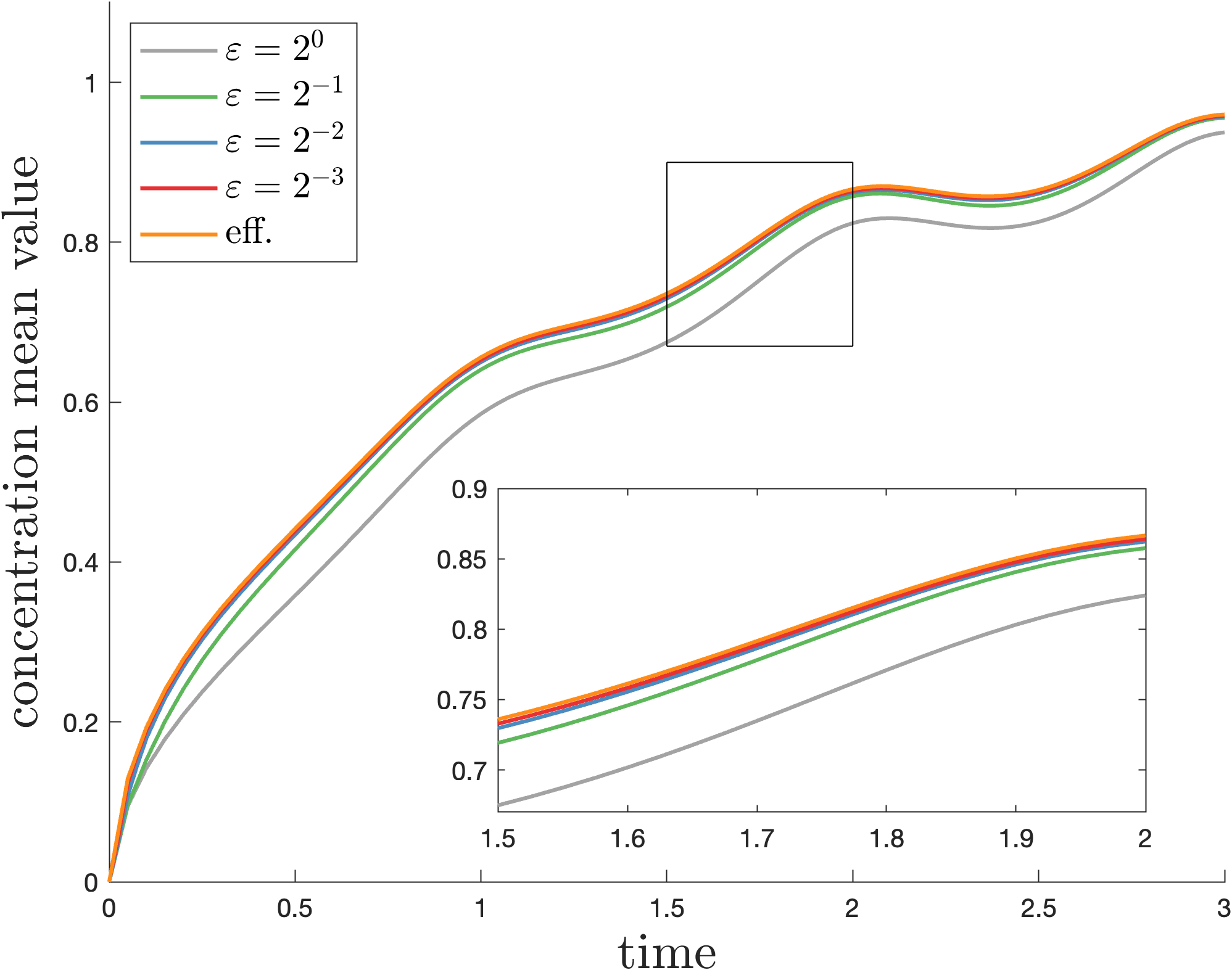}
\caption{Time evolution of the concentration mean value for the microscopic model with different values of $\eps$ and for the effective micro-macro model.}
\label{fig:convergence_of_mean}
\end{figure}
To conclude this section, we present numerical simulations of the spatial distribution of the approximation errors
$$
\bfue(t_\text{eval},\cdot)-\bfu\arrowvert_{\Oes}(t_\text{eval},\cdot), \quad c_\eps(t_\text{eval},\cdot)-c\arrowvert_{\Oes}(t_\text{eval},\cdot),
$$
and
\begin{gather*}
 \bfue(t_\text{eval},\cdot)-\left( \bfu\arrowvert_{\Oes}(t_\text{eval},\cdot) + \eps \bfu_1\left(t_\text{eval},\cdot, \frac{\cdot}{\eps}\right) \right), \\
  c_\eps(t_\text{eval},\cdot) - \left(c\arrowvert_{\Oes}(t_\text{eval},\cdot)+ \eps c_1\left(t_\text{eval},\cdot, \frac{\cdot}{\eps}\right)\right)
\end{gather*}
evaluated on the microscopic perforated domain, at a fixed time point $t_\text{eval}$. We also illustrate the correctors $\bfu_1$ and $c_1$, see \eqref{eq:correctors}, which contain information about the oscillations of the microscopic solutions $\bfue$ and $c_\eps$ which are not covered by the macroscopic solutions $\bfu$ and $c$. 
\begin{figure}
\centering
\begin{subfigure}[b]{0.49\textwidth}
\centering
\includegraphics[width=0.8\textwidth]{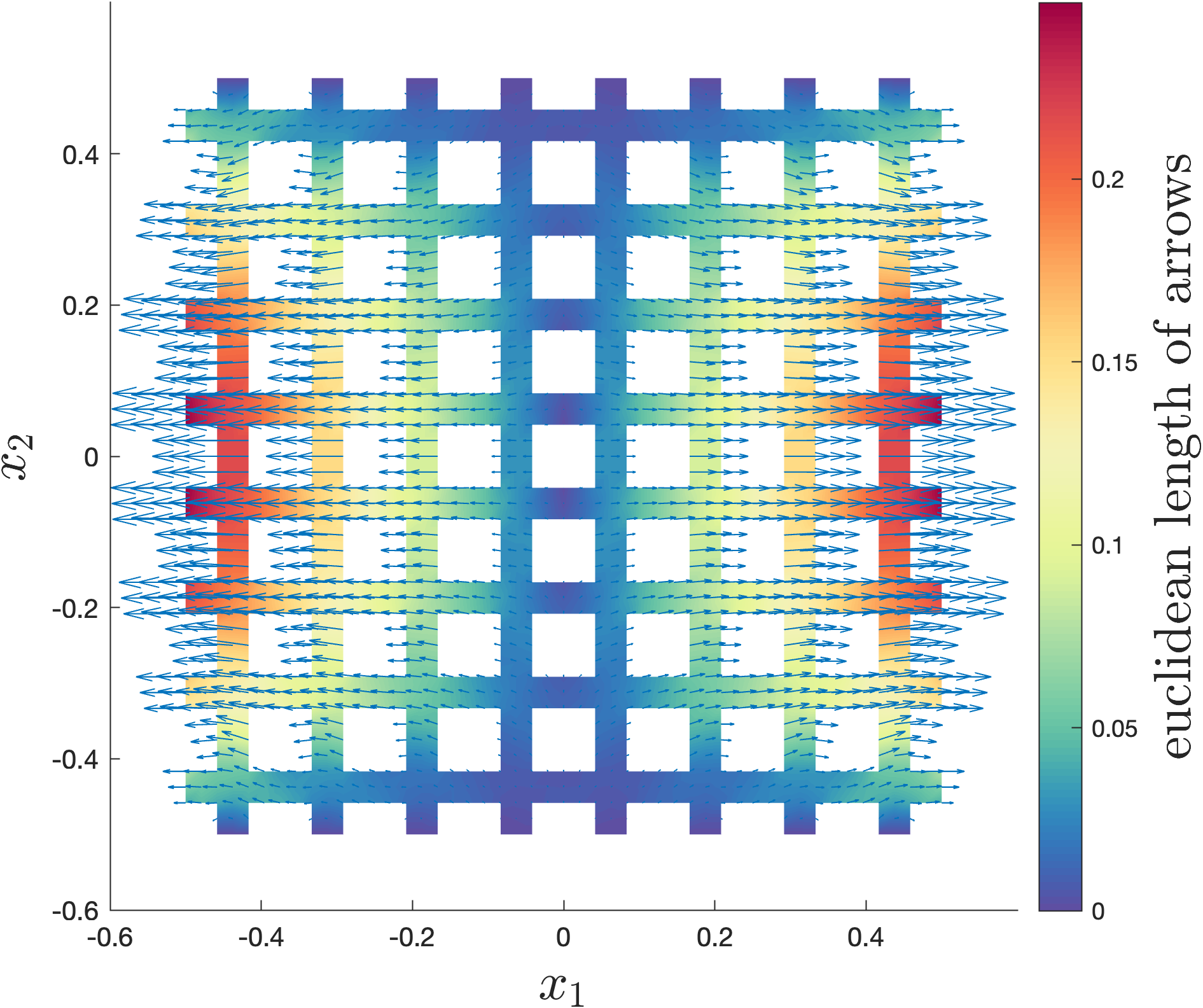}
\caption{$\bfu_{\eps}(t_\text{eval},\cdot)$}
\label{fig:corrector_plots:elasticity:a}
\end{subfigure}
\hfill
\begin{subfigure}[b]{0.49\textwidth}
\centering
\includegraphics[width=0.8\textwidth]{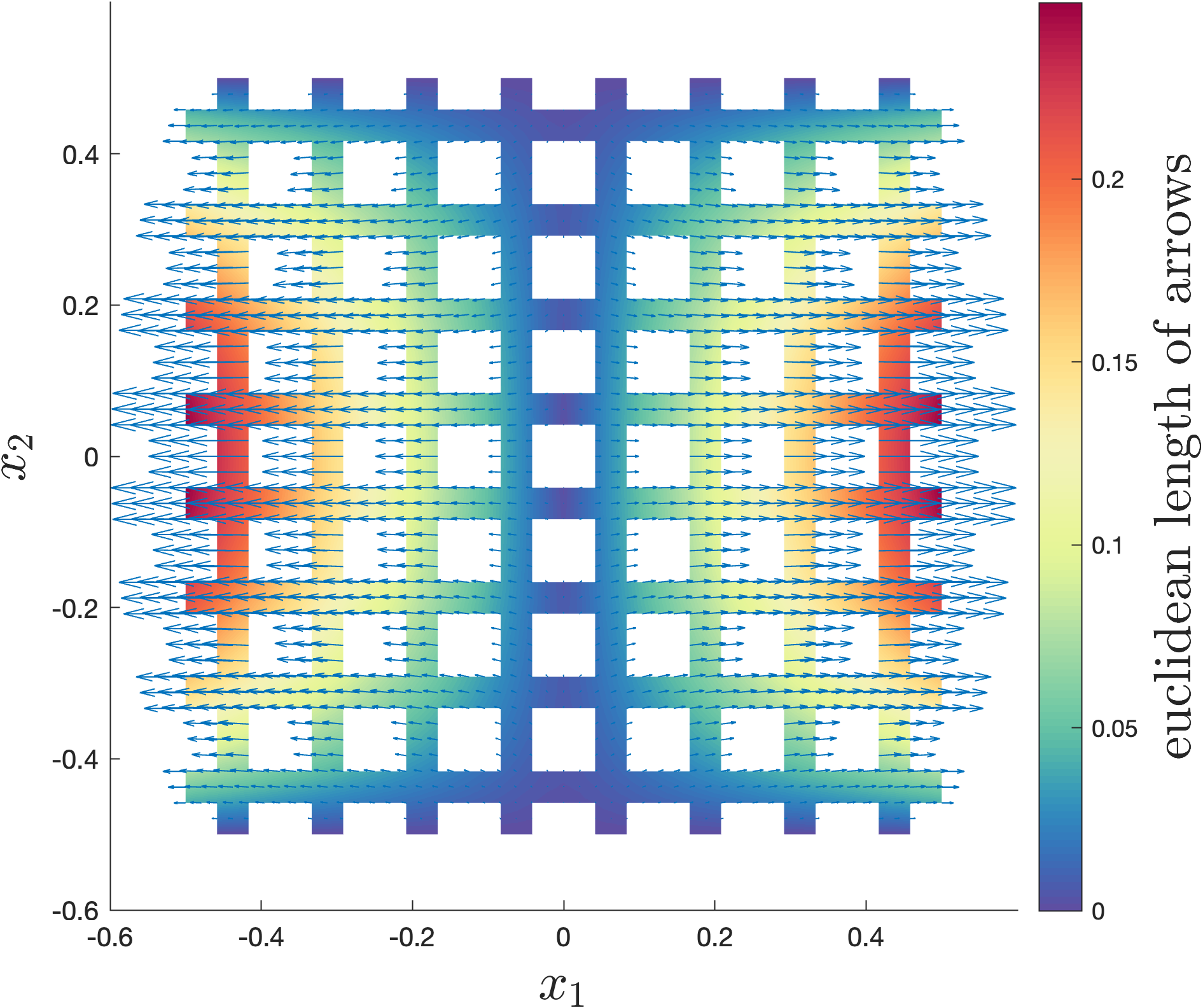}
\caption{$\bfu\arrowvert_{\Oes}(t_\text{eval},\cdot)$}
\label{fig:corrector_plots:elasticity:b}
\end{subfigure}
\hfill
\begin{subfigure}[b]{0.49\textwidth}
\centering
\includegraphics[width=0.8\textwidth]{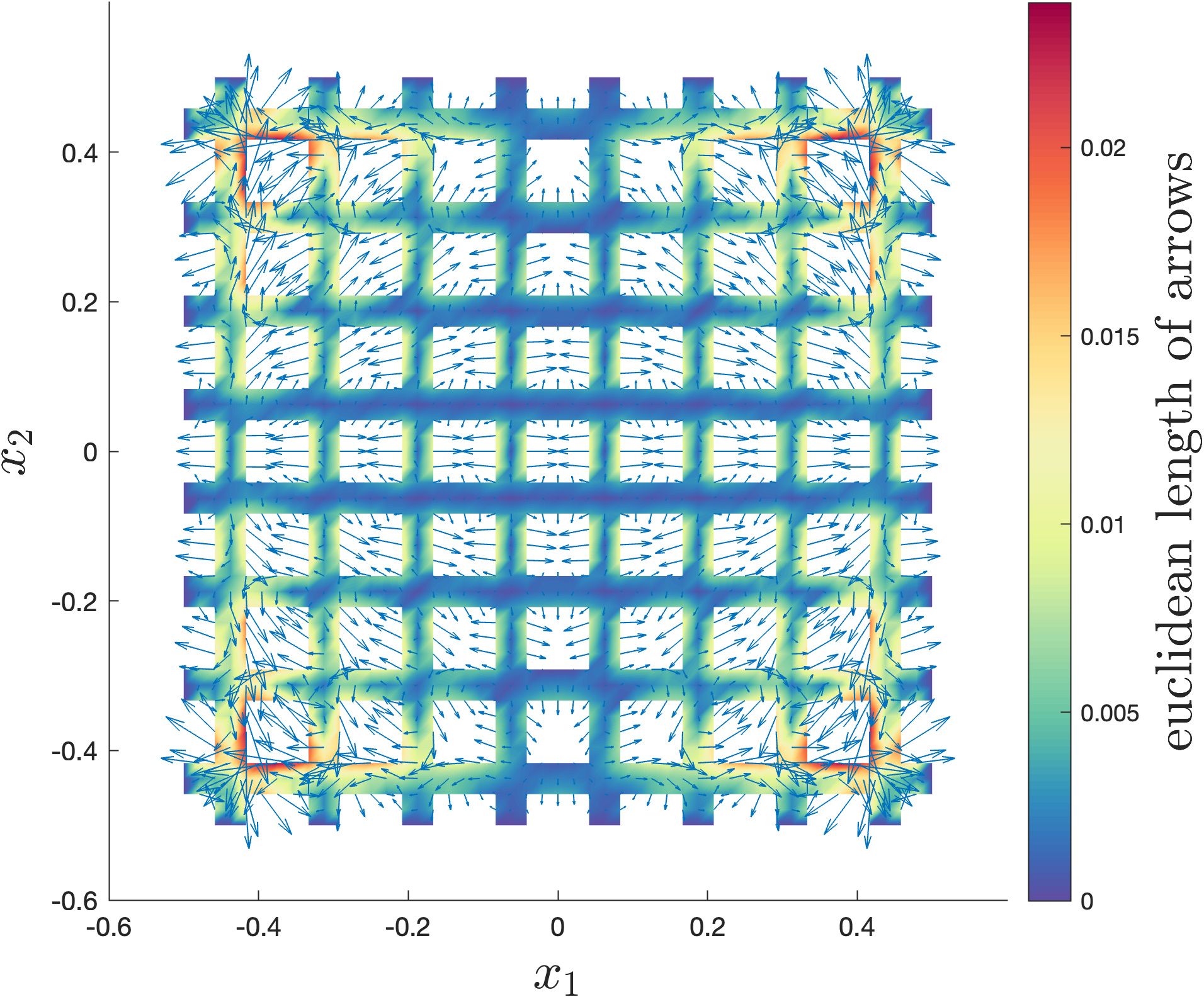}
\caption{$\bfue(t_\text{eval},\cdot)-\bfu\arrowvert_{\Oes}(t_\text{eval},\cdot)$}
\label{fig:corrector_plots:elasticity:c}
\end{subfigure}
\hfill
\begin{subfigure}[b]{0.49\textwidth}
\centering
\includegraphics[width=0.8\textwidth]{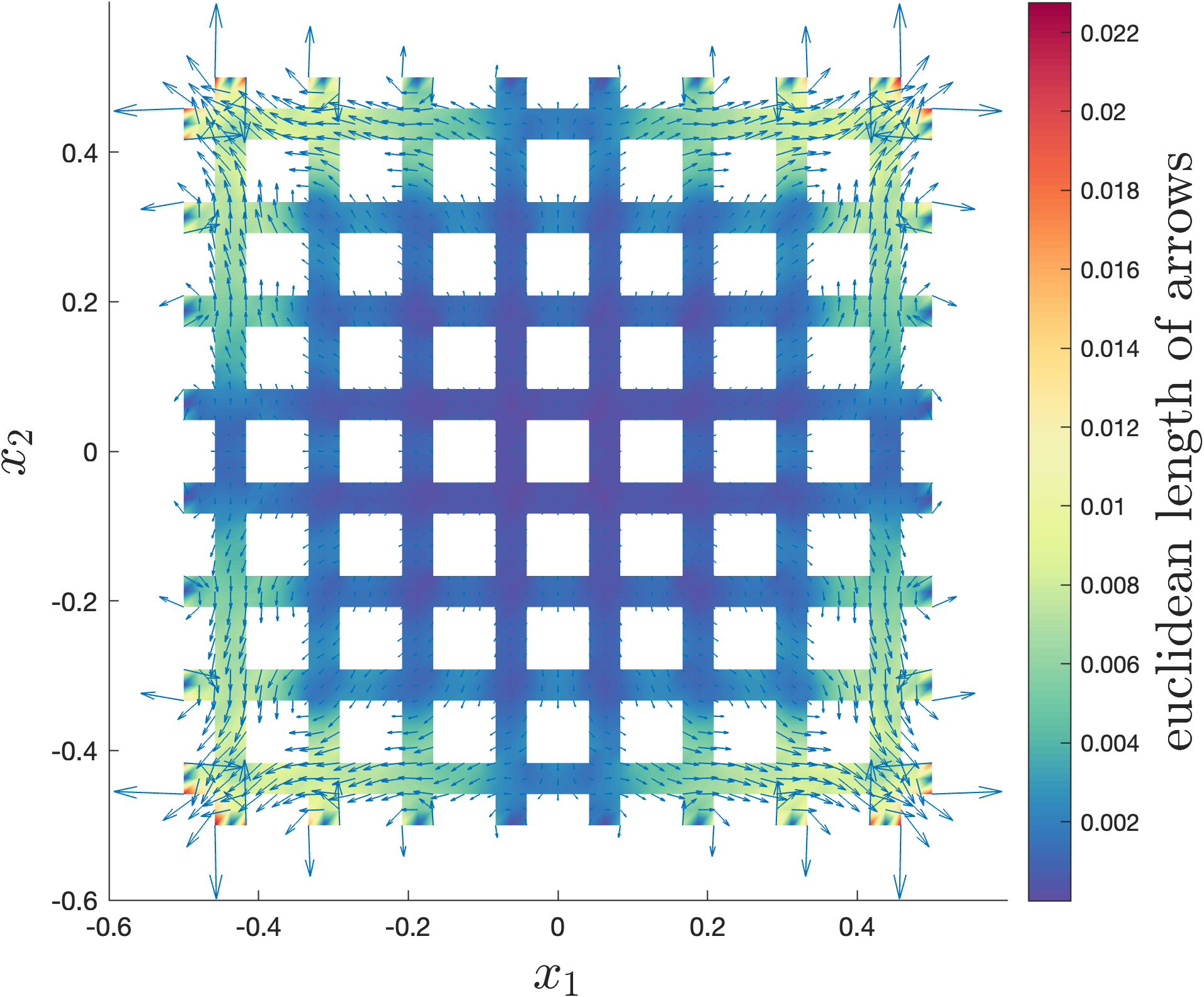}
\caption{$\bfue(t_\text{eval},\cdot)-(\bfu\arrowvert_{\Oes}(t_\text{eval},\cdot) + \eps \bfu_1(t_\text{eval},\cdot, \frac{\cdot}{\eps}))$}
\label{fig:corrector_plots:elasticity:d}
\end{subfigure}
\hfill
\begin{subfigure}[b]{0.49\textwidth}
\centering
\includegraphics[width=0.8\textwidth]{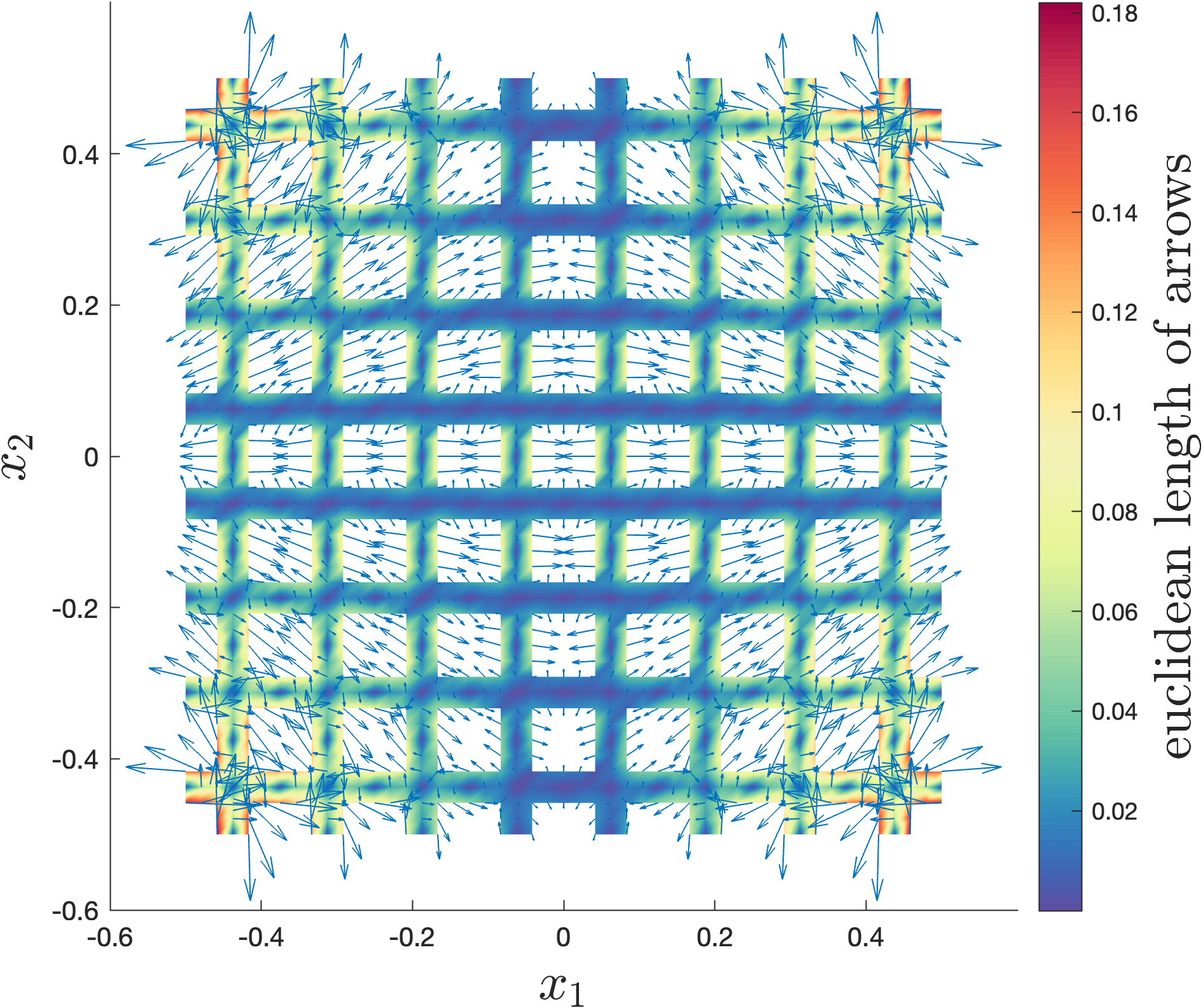}
\caption{$\bfu_1(t_\text{eval},\cdot,\frac{\cdot}{\eps})$}
\label{fig:corrector_plots:elasticity:e}
\end{subfigure}
\caption{Spatial visualization of the microscopic displacement, effective displacement, first displacement corrector and errors, obtained from simulations for $\eps=\frac{1}{8}$ at $t_\text{eval}=0.5$. Note that the arrows in Figures (c)-(e) are scaled by an appropriate factor for a better visibility. The displacement magnitude, i.e. the euclidean norm of the arrows, is color-encoded on the domain.}
\label{fig:corrector_plots:elasticity}
\end{figure}%
\begin{figure}
\centering
\begin{subfigure}[b]{0.49\textwidth}
\centering
\includegraphics[width=0.8\textwidth]{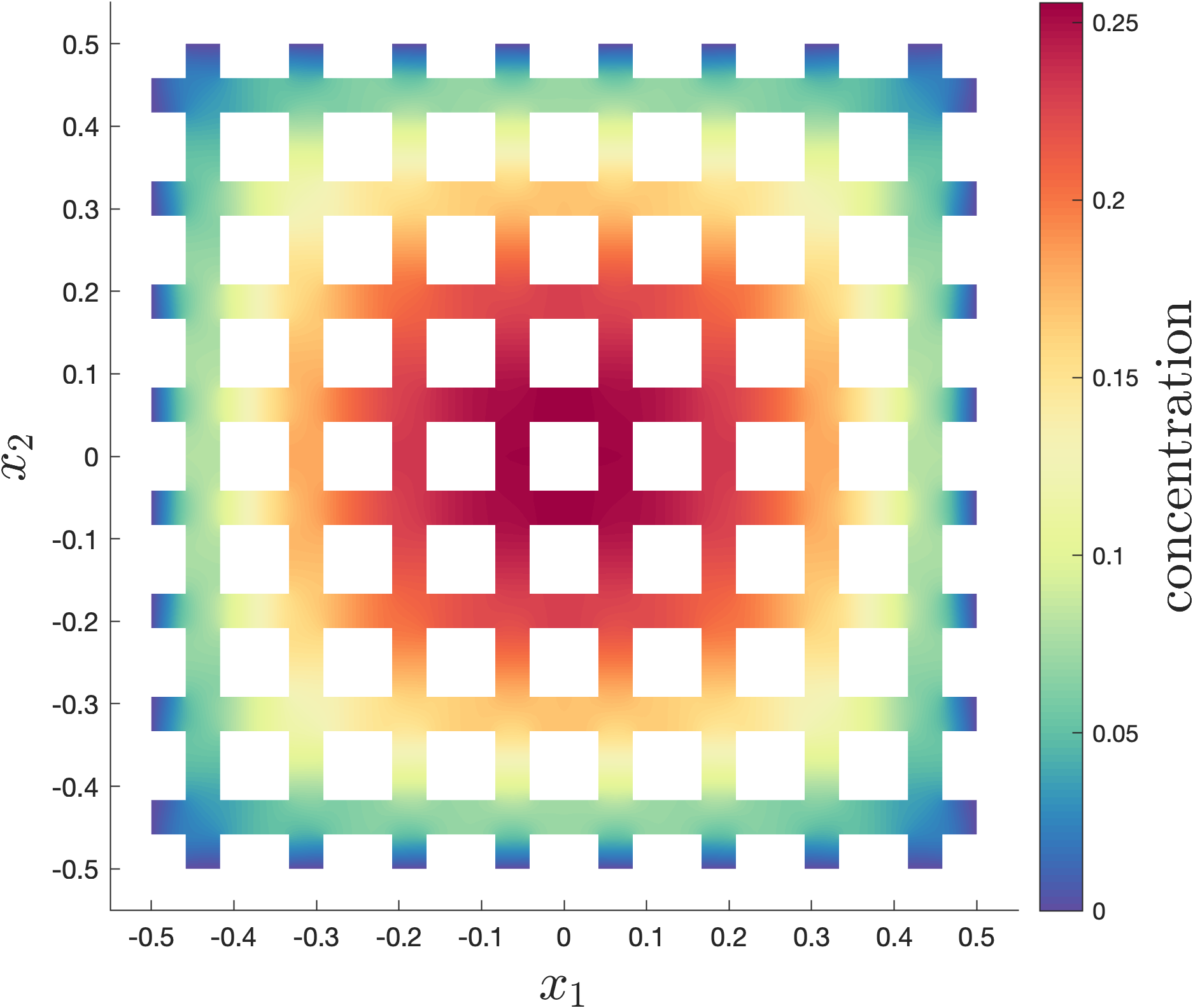}
\caption{$c_\eps(t_\text{eval},\cdot)$}
\label{fig:corrector_plots:diffusion:a}
\end{subfigure}
\hfill
\begin{subfigure}[b]{0.49\textwidth}
\centering
\includegraphics[width=0.8\textwidth]{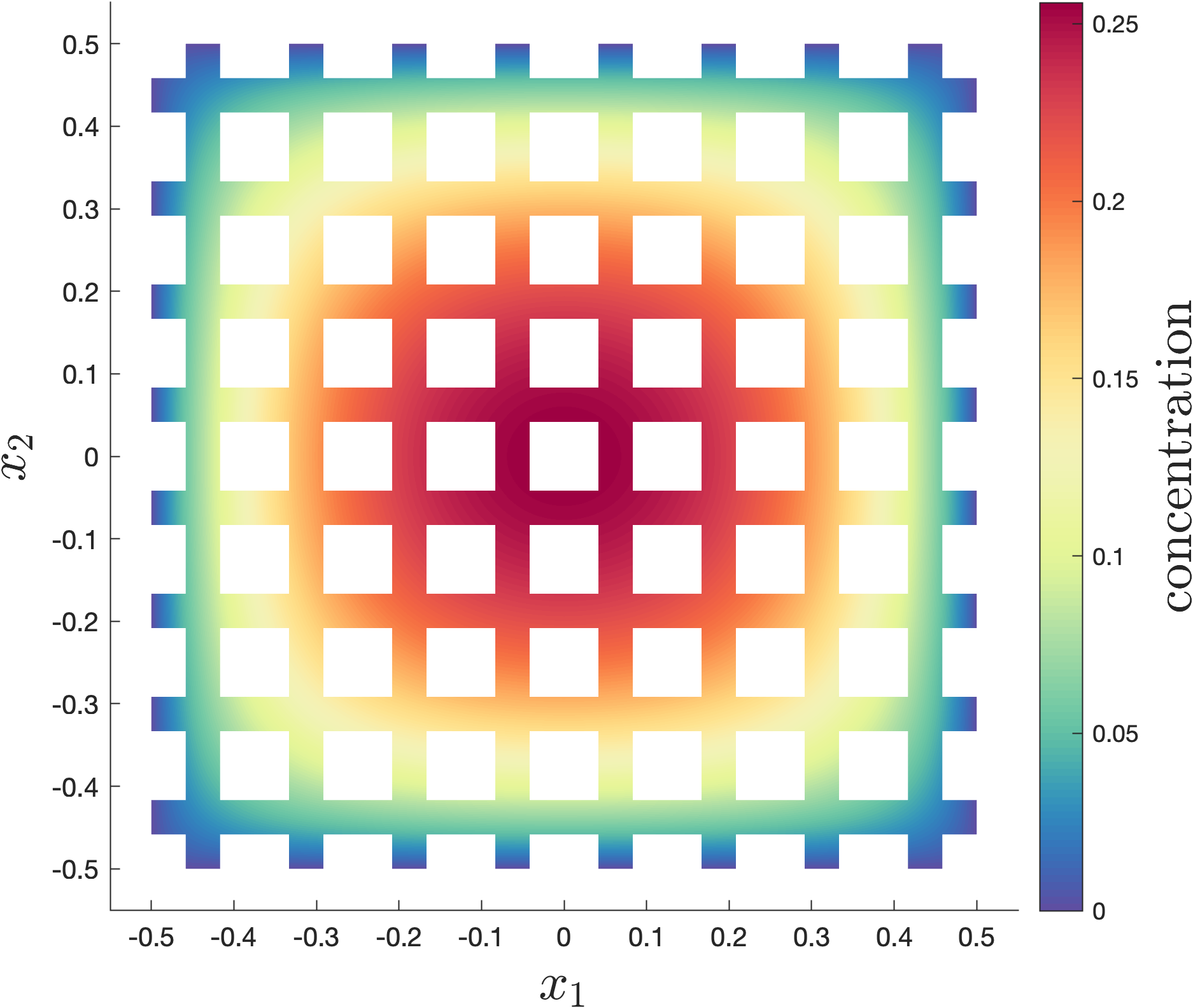}
\caption{$c\arrowvert_{\Oes}(t_\text{eval},\cdot)$}
\label{fig:corrector_plots:diffusion:b}
\end{subfigure}
\hfill
\begin{subfigure}[b]{0.49\textwidth}
\centering
\includegraphics[width=0.8\textwidth]{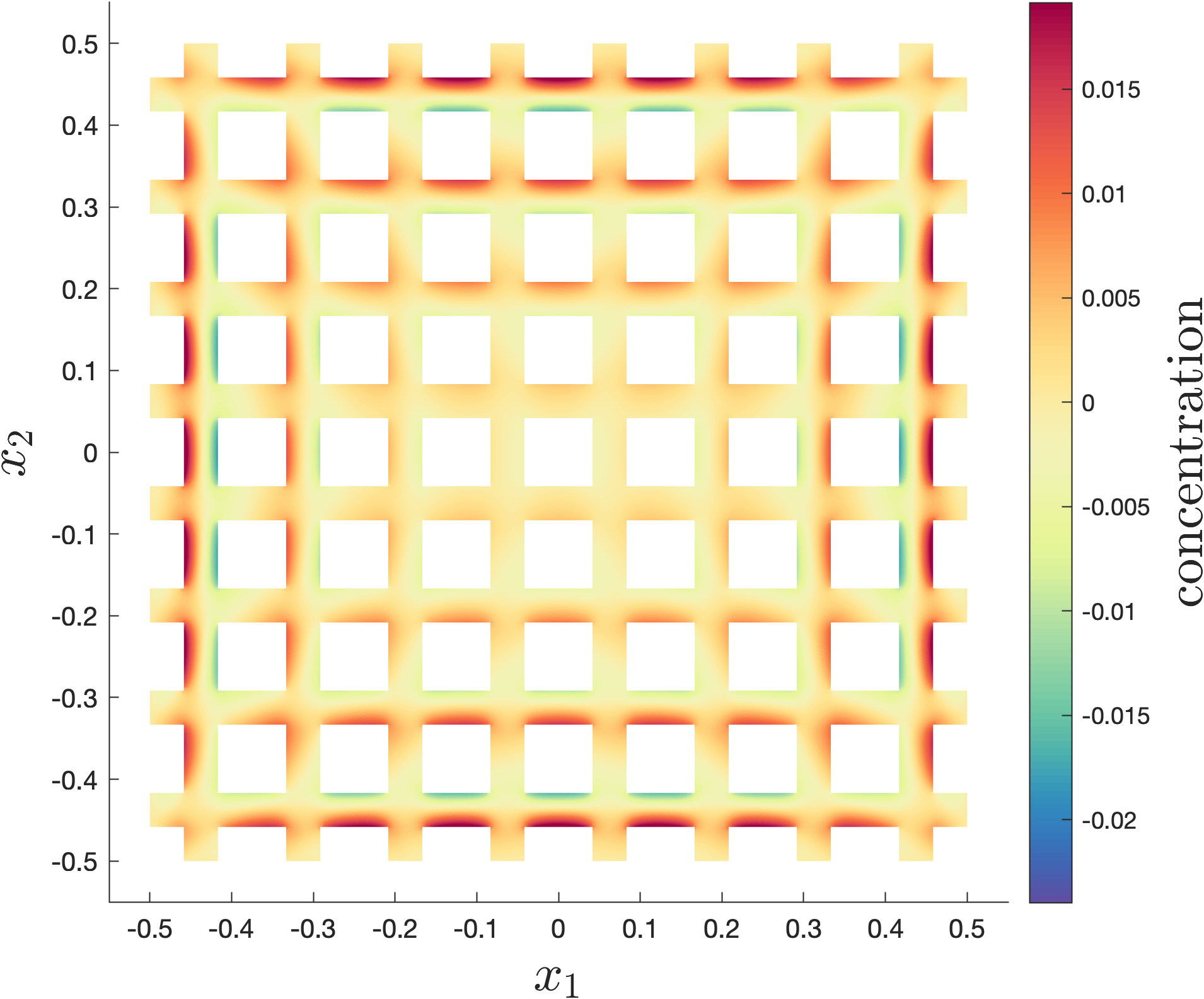}
\caption{$c_\eps(t_\text{eval},\cdot)-c\arrowvert_{\Oes}(t_\text{eval},\cdot) $}
\label{fig:corrector_plots:diffusion:c}
\end{subfigure}
\hfill
\begin{subfigure}[b]{0.49\textwidth}
\centering
\includegraphics[width=0.8\textwidth]{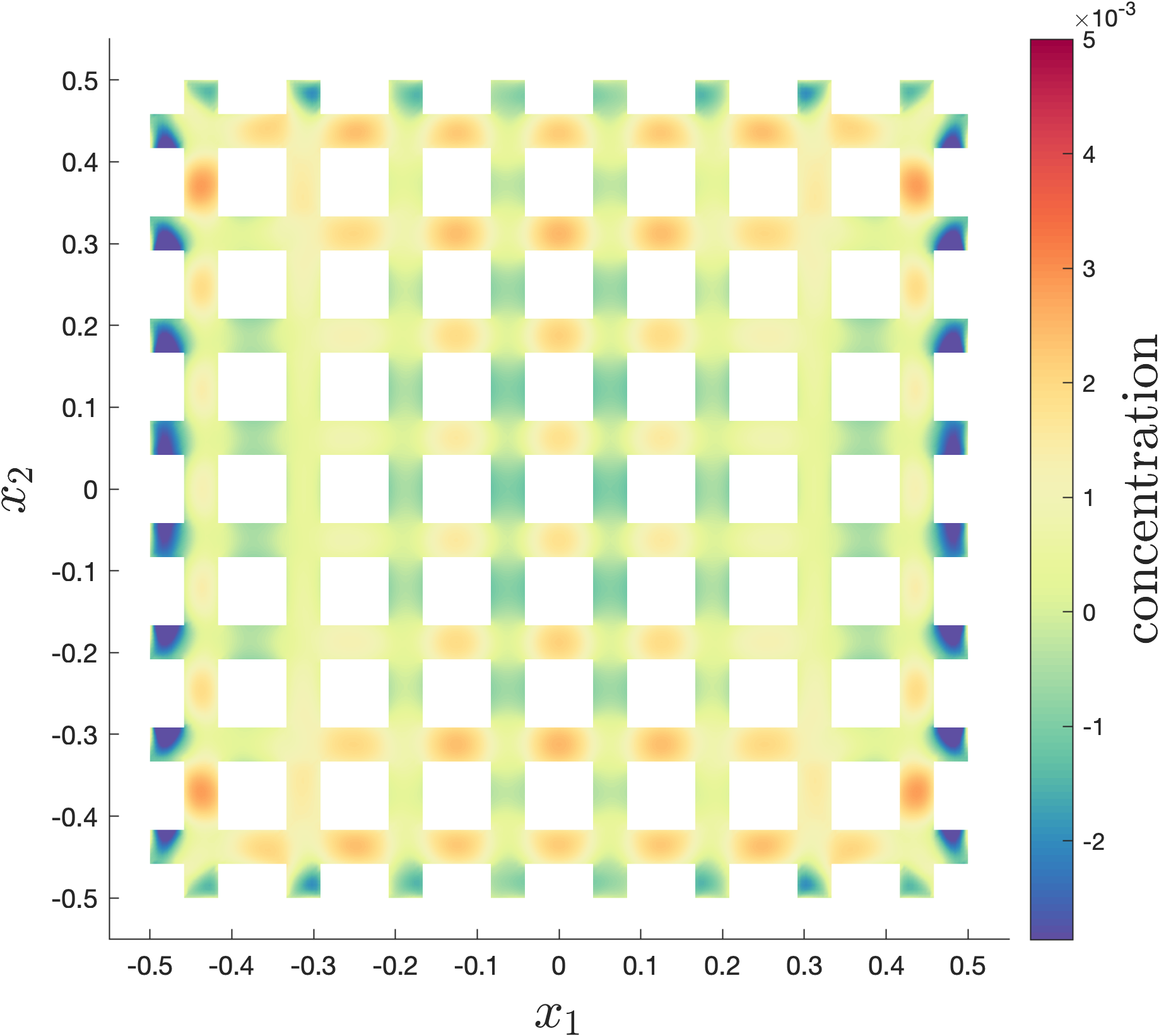}
\caption{$c_\eps(t_\text{eval},\cdot) - (c\arrowvert_{\Oes}(t_\text{eval},\cdot)+ \eps c_1(t_\text{eval},\cdot, \frac{\cdot}{\eps}))$}
\label{fig:corrector_plots:diffusion:d}
\end{subfigure}
\hfill
\begin{subfigure}[b]{0.49\textwidth}
\centering
\includegraphics[width=0.8\textwidth]{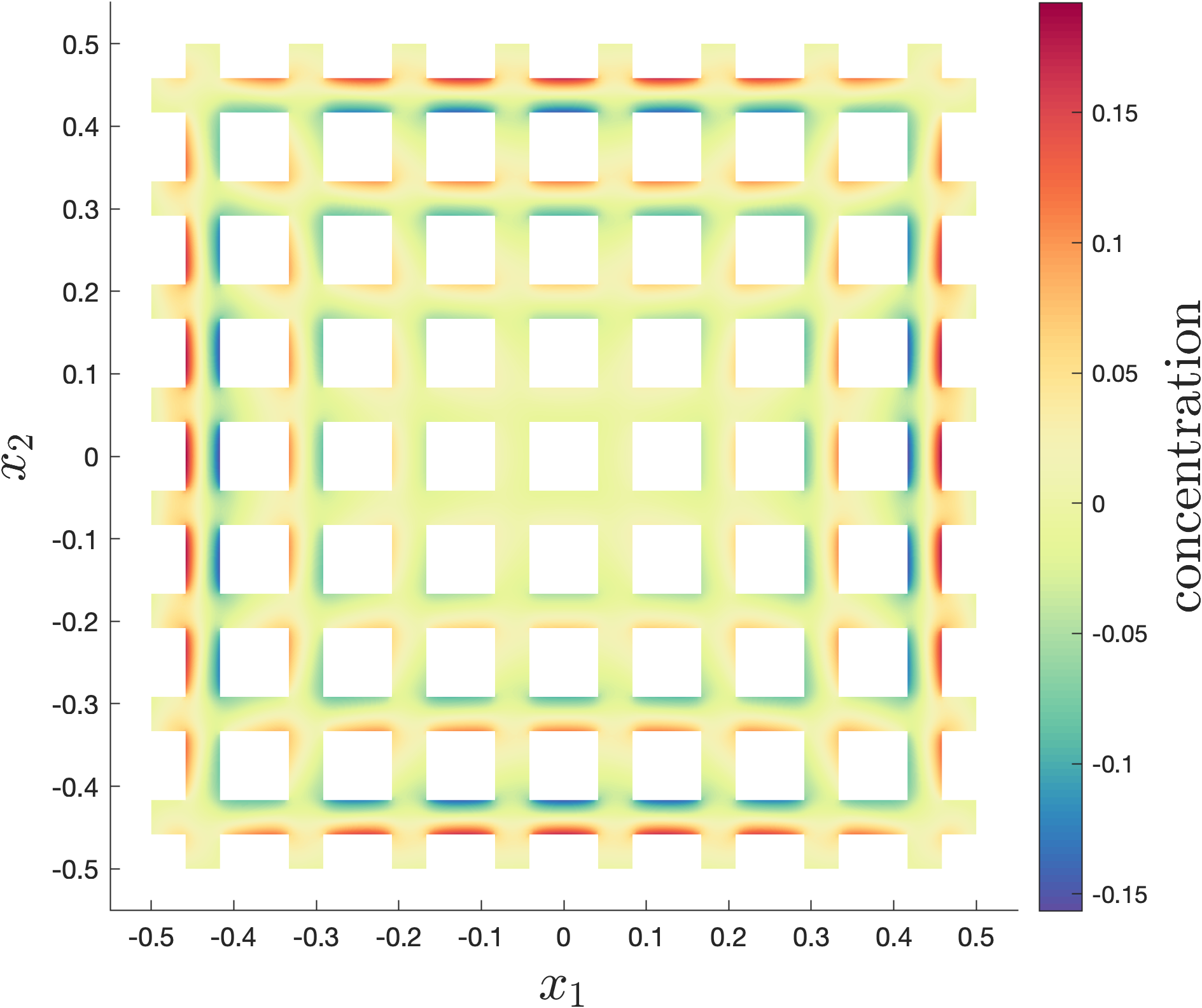}
\caption{$c_1(t_\text{eval},\cdot, \frac{\cdot}{\eps})$}
\label{fig:corrector_plots:diffusion:e}
\end{subfigure}
\caption{Spatial visualization of the microscopic concentration, effective concentration, first concentration corrector and errors, obtained from simulations for $\eps=\frac{1}{8}$ at $t_\text{eval}=0.5$.}
\label{fig:corrector_plots:diffusion}
\end{figure}%
In Figure \ref{fig:corrector_plots:elasticity:a} and \ref{fig:corrector_plots:elasticity:b}, we visualize the solutions $\bfue$ and $\bfu$ to the microscopic problem and the effective micro-macro problem, for a fixed time point $t_\text{eval} = 0.5$. Although there is no apparent difference between $\bfu\arrowvert_{\Oes}$ and $\bfue$, there is a mismatch between these functions as can be seen in Figure \ref{fig:corrector_plots:elasticity:c} where $\bfue-\bfu\arrowvert_{\Oes}$ is visualized. The largest error is found near the ”macroscopic corners” of the domain, but also in the inner region an error is visible. Adding the scaled corrector term $\eps \bfu_1$, see Figure \ref{fig:corrector_plots:elasticity:d},  improves the approximation inside the domain significantly and also to some extent in the corners, as can be seen from Figure  \ref{fig:corrector_plots:elasticity:d}. However, a reduced mismatch in the corners still remains and at the "macroscopic" edges of the domain the approximation is worsened. Here, the well known boundary layer effect is visible, which occurs due to the \textit{Dirichlet} boundary conditions imposed to $\bfue$ and $\bfu$, but which are not fulfilled by the corrector term $\bfu_1$. \par 
Analogous results for the diffusion problem are displayed in Figure \ref{fig:corrector_plots:diffusion}. Here, again addition of the scaled corrector term $\eps c_1$ improves the approximation in the interior of the domain, while inducing an error at the outer part of the boundary.  This error is more pronounced at the lateral part of the boundary, where the deformation gradient has higher values. \par
\section{Comparison of the effective micro-macro model with alternative models for transport problems in elastic perforated media}
\begin{figure}
\centering
\includegraphics[width = 0.75\textwidth]{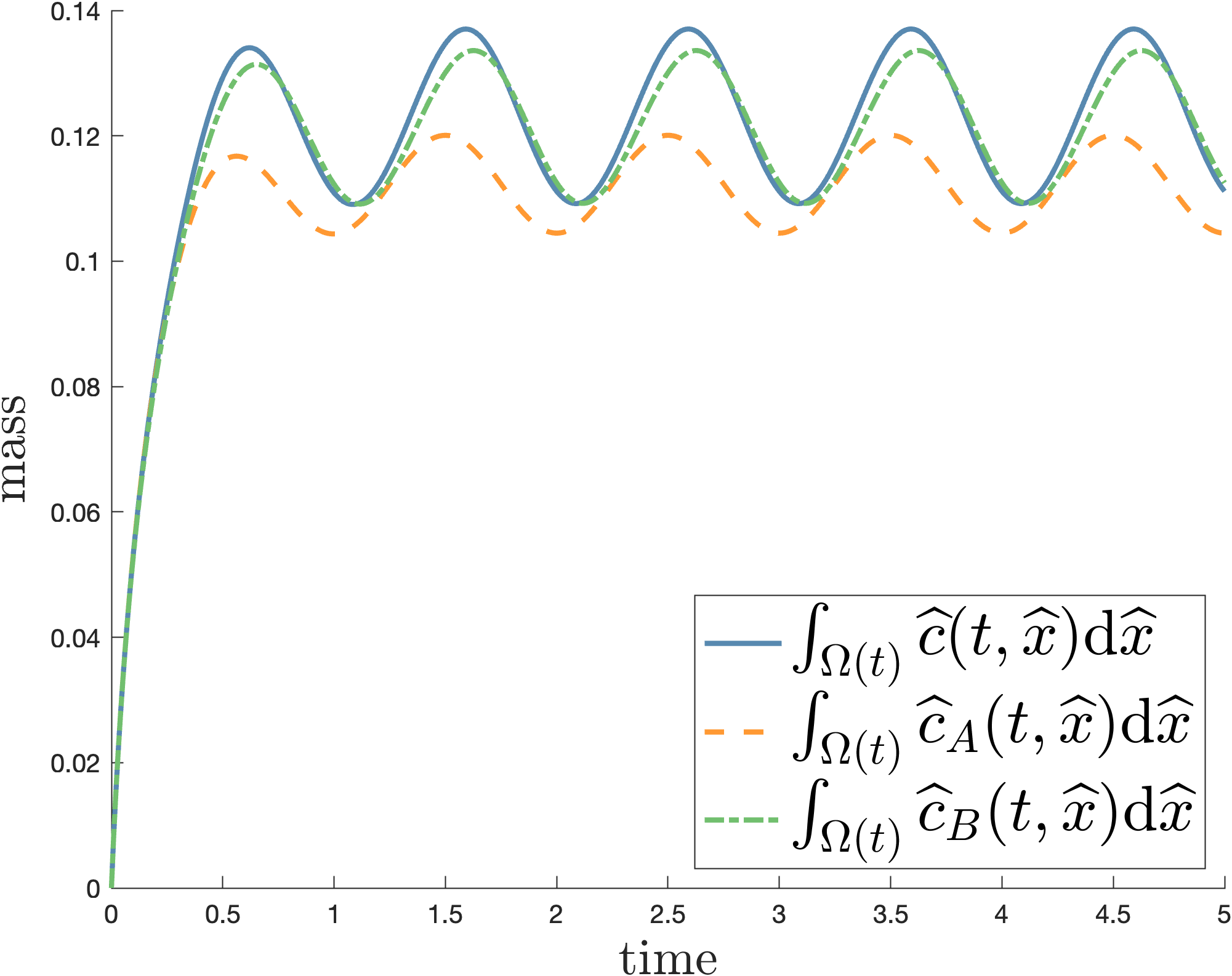}
\caption{Current mass over time according to different descriptions of transport processes in elastically deformable perforated media.}
\label{fig:alternative_models_mass}
\end{figure}
The effective micro-macro problem 
\eqref{eq:effective_micro_macro_model}-\eqref{def:F_0} was derived in \cite{knoch2023} from a microscopic problem in mixed \textit{Eulerian}/\textit{Lagrangian} formulation by first transforming the microscopic transport problem onto the fixed reference domain and then formally homogenizing the problem on the fixed domain. As a result, the effective model has a strongly coupled micro-macro structure, requiring a high numerical effort for the simulations.\par
In this section, we discuss two alternative approaches for the derivation of effective models for transport processes in elastic perforated media, which lead to "simpler" models with a reduced numerical complexity. Then, we compare their solutions with the solution of the effective micro-macro model for the simulation scenario presented at the beginning of the previous section.\par 
The first approach, referred to as \textit{alternative approach A}, is to formulate the microscopic elasticity-transport problem directly in the unified \textit{Lagrangian} framework, i.e. to formulate the microscopic transport problem on the reference domain $\Oes$. Previously, this approach has been used, e.g., in \cite{jaeger2011}, where transport inside elastically deformable tissue cells has been coupled to transport in the extracellular space. In the microscopic model, the fluid-solid-interface has been linearized, resulting in transport equations on the reference fluid and solid domain  respectively. In \cite{bukac2015}, transport processes in elastically deformable domains have been considered in the reference domain and then, for the visualization, a push forward of the concentration onto the deformed domain was considered. 
Thus, following \textit{approach A}, i.e., starting from the transport equation in the reference domain, the method of asymptotic expansion directly leads to the classical effective model for diffusion problems in fixed domains
\begin{equation} \label{eq:alternative_A}
|Y^s| \partial_t c_A - \nabla \cdot \left( \Dhom_A\nabla c_A \right) = |Y^s|f_d \quad \mbox{ in } (0,T) \times \Omega.
\end{equation}
Here, the entries of the constant effective diffusion tensor $\Dhom_A \in \R^{n \times n}$ are given by 
$$
\Dhom_{A,ij} = \int_{Y^s} \wh{\textbf{D}} \left( \textbf{e}_i + \nabla_y\eta_i(y) \right) \cdot \left( \textbf{e}_j+ \nabla_y\eta_j(y) \right) \textrm{d}y.
$$
 The cell problems are independent of $t$ and $x$:
\begin{align*}
-\nabla_y \cdot \left[ \wh{\bfD} \left( \textbf{e}_i + \nabla_y \eta_i(y) \right) \right] &= 0 & \mbox{ in }  Y^s, \\
- \wh{\bfD} \left( \textbf{e}_i + \nabla_y \eta_i(y) \right) \cdot \textbf{n}_\Gamma &= 0 & \mbox{ on }  \Gamma,\\
\eta_i \mbox{ is } Y^s \mbox{-periodic in $y$, } \, \int_{Y^s} \eta_i \textrm{d}y = 0.
\end{align*}
In this case, the computation of the effective diffusion coefficient $\Dhom_A$ can be done once and for all at the beginning of the simulation, analogously to the computation of the effective elasticity tensor $\Ahom$ in Algorithm \ref{alg:1} As a result, the computational effort is significantly reduced in comparison to the effective micro-macro model \eqref{eq:effective_micro_macro_model}-\eqref{def:F_0}. For the graphical presentation of the simulation results the concentration is transformed to the current deformed domain via the push-forward
$$
\wh{c}_A(t,\wh{x}) := c_A(t, \bfS^{-1}(t,\wh{x})),
$$
using the macroscopic deformation $\bfS(t,x):=x+\bfu(t,x)$.
\par 
A second approach, referred to as \textit{alternative approach B}, consists of using the constant effective coefficients from the previous approach A (computed on the reference domain), including the effective diffusion tensor $\Dhom_A$, to formulate a homogenized transport problem in the \textit{Eulerian} framework: 
\begin{equation}\label{eq:approach_B}
|Y^s|\partial_t \wh{c}_B - \wh{\nabla} \cdot \left( \wh{\bfv}_B\wh{c}_B - \Dhom_A\wh{\nabla}\wh{c}_B \right) = |Y^s| \wh{f}_d \quad \mbox{ in } Q^T_B,
\end{equation}
where $Q^T_B := \bigcup_{t\in(0,T)} \{t\} \times \Omega(t)$, $\Om(t):=\{ \wh{x} \in \R^n \mid \wh{x} = \bfS(t,x),\; x \in \Om \}$ and $\wh{\bfv}_B(t,\wh{x}):= \partial_t \bfS(t, \bfS^{-1}(t,\wh{x}))$. Then, in order to perform the simulations by using our numerical framework, the problem \eqref{eq:approach_B} is transformed to the fixed reference domain $\Omega$. The transformed problem reads:
\begin{equation}\label{eq:alternative_B}
|Y^s|\partial_t(J_B c_B) - \nabla \cdot \left( \Dhom_B \nabla c_B \right) = |Y^s|J_B f_d \quad \mbox{ in } (0,T) \times \Omega,
\end{equation}
with
\begin{align*}
c_B(t,x) &:= \wh{c}_B(t,\bfS(t,x)),\\
\Dhom_B(t,x) &:= \left[J_B \bfF_B^{-1} \Dhom_A \bfF^{-T}_B\right](t,x),\\
\bfF_B(t,x) &:= \nabla \bfS(t,x),\\
J_B(t,x) &:= \det(\bfF_B(t,x)).\\
\end{align*}%
Finally, 
the solution on the time-dependent domain is given by
$$
\wh{c}_B(t,\wh{x}) = c_B(t, \bfS^{-1}(t,\wh{x})).
$$
This second approach is highly phenomenological, however it also has a reduced complexity compared to the effective micro-macro model. \par 
Now, we compare the solutions of the two alternative approaches described above with the solution of the effective micro-macro model numerically. We perform simulations, using the simulation scenario from Section \ref{sec:numerical_justification}, and visualize the mass over time on the deformed domain, see Figure \ref{fig:alternative_models_mass}. We observe that all curves exhibit in some way the time-dependent oscillations coming from the stretching-relaxing cycles of the domain. However, neither of the solutions to the alternative approaches match the results obtained from the effective micro-macro model \eqref{eq:effective_micro_macro_model}-\eqref{def:F_0} which is depicted in blue (solid line). In the case of \textit{alternative approach A} (orange, dashed line), the time-oscillations are shifted and the mass lies well below the mass in of the other approaches. The latter observation can be attributed to the constant source term $|Y^s|f_d$ on the right-hand side of Equation \eqref{eq:alternative_A}, see also \eqref{eq:rhs_and_init_value}, which does not take into account the change of source due to the deformation. In the case of \textit{alternative approach B} (green, dash-dotted line) the time-oscillations are still too damped and slightly shifted in comparison to the blue (solid) curve. However, the mass is significantly increased in comparison to the orange (dashed) curve, since the right-hand side of Equation \eqref{eq:alternative_B} does account for changes in the source term due to deformations. Altoghether, we see from Figure \ref{fig:alternative_models_mass} that is necessary to consider the effective micro-macro problem \eqref{eq:effective_micro_macro_model}-\eqref{def:F_0} in order to capture important features of the transport in elastic heterogeneous media, like the effect of the deforming microstructure on the amplitude of the time-oscillations of the mass in our numerical experiment with cyclic stretching and relaxing.
%
\section{Conclusion and outlook}
\label{sec:discussion}

In this paper, we proved global existence in time and uniqueness of solutions to an effective micro-macro problem for reactive transport in elastically derformable porous media,  which was formally derived starting from a microscopic model in mixed \textit{Lagrangian/Eulerian} formulation. This result thus generalizes existing results on global well-posedness for elasticity-transport models of micro-macro type to situations for which no additional assumptions (such as linearization) are made about the evolution of the free boundary at the microscale. 
The numerical validation of the effective model underpins its use for the description and investigation of various applications in deformable perforated media.  \par 
At the same time, the analytical and numerical investigations in this paper are a starting point for future research on transport processes in elastically deformable porous media.   
For this purpose, processes in the fluid part of the porous medium have to be considered. As an application, we mention the transport of bio-chemical compounds, like glucose or lactate, in biological tissue which takes place in both, the intracellular and the extracellular space.
Such models, however, involve transport equations coupled to nonlinear fluid-structure interactions in complex microscopic geometries. The analysis of the resulting micro-macro models is an open problem so far.  It is particularly interesting to find and interpret assumptions that guarantee the well-posedness of such models. Furthermore, their numerical treatment require the extension of the computational tools used in this paper to micro-macro models for fluid-elasticity-transport systems.  
%
\bibliographystyle{siamplain}
\bibliography{literature}
\end{document}